\documentclass[11pt,a4paper]{amsart}
\usepackage[utf8]{inputenc}

\usepackage[mono=false,osf]{libertine}
\usepackage{libertinust1math}
\usepackage[T1]{fontenc}
\usepackage[scaled=0.88]{beramono}
\renewcommand{\mathsf}[1]{\text{\normalfont\sffamily#1}}
\usepackage[cal=euler,scr=boondoxo,bb=pazo]{mathalfa}

\usepackage[OkabeIto]{colorblind}
\usepackage{xcolor-material}
\usepackage[unicode, psdextra, colorlinks=true, linkcolor=GoogleBlue, citecolor=GoogleGreen, urlcolor=GoogleBlue, linktocpage]{hyperref}

\usepackage{amsmath}
\usepackage{amsfonts,amssymb,amsthm,amscd}

\usepackage{trimclip}

\usepackage{setspace}

\usepackage{tikz}
\usetikzlibrary{cd}
\usepackage{mathrsfs}
\usepackage{multirow}
\usepackage{stmaryrd}
\SetSymbolFont{stmry}{bold}{U}{stmry}{m}{n} 
\usepackage{bm,bbm}
\usepackage{mathtools}
\usepackage{etoolbox}
\usepackage{enumitem}

\usepackage[margin=3cm]{geometry}

\tikzcdset{scale cd/.style={every label/.append style={scale=#1},
    cells={nodes={scale=#1}}}}

\makeatletter
\def\l@subsection{\@tocline{2}{0pt}{2.5pc}{5pc}{}}
\makeatother

\makeatletter
\newcommand\RedeclareMathOperator{%
  \@ifstar{\def\rmo@s{m}\rmo@redeclare}{\def\rmo@s{o}\rmo@redeclare}%
}
\newcommand\rmo@redeclare[2]{%
  \begingroup \escapechar\m@ne\xdef\@gtempa{{\string#1}}\endgroup
  \expandafter\@ifundefined\@gtempa
     {\@latex@error{\noexpand#1undefined}\@ehc}%
     \relax
  \expandafter\rmo@declmathop\rmo@s{#1}{#2}}
\newcommand\rmo@declmathop[3]{%
  \DeclareRobustCommand{#2}{\qopname\newmcodes@#1{#3}}%
}
\@onlypreamble\RedeclareMathOperator
\makeatother

\newcommand{\dmerge}[4]{
	\draw (#1,#2) .. controls (#1,#4*0.5+#2*0.5) and (#3*0.5+#1*0.5,#4*0.5+#2*0.5) .. (#3*0.5+#1*0.5,#4);
	\draw (#3,#2) .. controls (#3,#4*0.5+#2*0.5) and (#3*0.5+#1*0.5,#4*0.5+#2*0.5) .. (#3*0.5+#1*0.5,#4);
}
\newcommand{\dmergedot}[5][OI5]{
	\draw [dotted,line cap=round,rounded corners=2pt,color=#1] (#2,#3) .. controls (#2,#5*0.5+#3*0.5) and (#4*0.5+#2*0.5,#5*0.5+#3*0.5) .. (#4*0.5+#2*0.5,#5);
	\draw [dotted,line cap=round,rounded corners=2pt,color=#1] (#4,#3) .. controls (#4,#5*0.5+#3*0.5) and (#4*0.5+#2*0.5,#5*0.5+#3*0.5) .. (#4*0.5+#2*0.5,#5);
}

\newcommand{\opbox}[5]{
	\draw (#1,#2) rectangle (#3,#4);
	\node[] at (#3*0.5+#1*0.5,#4*0.5+#2*0.5) {#5};
}
\newcommand{\ntxt}[3]{
	\node[text height=1.2ex,text depth=.25ex] at (#1,#2) {#3};
}
\newcommand{\crosin}[4]{
	\draw (#1,#2) .. controls (#1,#4*0.6+#2*0.4) and (#3,#4*0.4+#2*0.6) .. (#3,#4);
	\draw (#3,#2) .. controls (#3,#4*0.6+#2*0.4) and (#1,#4*0.4+#2*0.6) .. (#1,#4);
}
\newcommand{\overcros}[4]{
	\draw (#3,#2) .. controls (#3,#4*0.6+#2*0.4) and (#1,#4*0.4+#2*0.6) .. (#1,#4);
	\draw[line width=3pt, white] (#1,#2) .. controls (#1,#4*0.6+#2*0.4) and (#3,#4*0.4+#2*0.6) .. (#3,#4);
	\draw (#1,#2) .. controls (#1,#4*0.6+#2*0.4) and (#3,#4*0.4+#2*0.6) .. (#3,#4);
}
\newcommand{\overcrosdash}[4]{
	\draw [dotted,line cap=round,rounded corners=2pt](#3,#2) .. controls (#3,#4*0.6+#2*0.4) and (#1,#4*0.4+#2*0.6) .. (#1,#4);
	\draw[line width=5pt, white] (#1,#2) .. controls (#1,#4*0.6+#2*0.4) and (#3,#4*0.4+#2*0.6) .. (#3,#4);
	\draw [dotted,line cap=round,rounded corners=2pt](#1,#2) .. controls (#1,#4*0.6+#2*0.4) and (#3,#4*0.4+#2*0.6) .. (#3,#4);
}
\newcommand{\undercros}[4]{
	\draw (#1,#2) .. controls (#1,#4*0.6+#2*0.4) and (#3,#4*0.4+#2*0.6) .. (#3,#4);
	\draw[line width=3pt, white] (#3,#2) .. controls (#3,#4*0.6+#2*0.4) and (#1,#4*0.4+#2*0.6) .. (#1,#4);
	\draw (#3,#2) .. controls (#3,#4*0.6+#2*0.4) and (#1,#4*0.4+#2*0.6) .. (#1,#4);
}

\newcommand{\identity}[3]{
	\draw[white] (#1,#2) -- (#1,#3);
	\draw (#1,#2*0.7+#3*0.3) -- (#1,#3);
	\draw (#1+0.2,#2*0.7+#3*0.3+0.2) arc[start angle=0, end angle=-180, radius=0.2];
}

\newcommand{\strdot}[2]{
	\fill (#1,#2) circle (4pt);
}
\newcommand{\strdotwhite}[2]{
	\draw[fill=white] (#1,#2) circle (4pt);
}
\newcommand{\curline}[4]{
	\draw (#1,#2) .. controls (#1,#4*0.6+#2*0.4) and (#3,#4*0.4+#2*0.6) .. (#3,#4);
}
\newcommand{\curlinedot}[4]{
	\draw[dotted,line cap=round,rounded corners]  (#1,#2) .. controls (#1,#4*0.6+#2*0.4) and (#3,#4*0.4+#2*0.6) .. (#3,#4);
}

\newcommand{\sigact}[5][black]{
	\draw[dotted,line cap=round,rounded corners,color=#1] (#2,#3) .. controls (#2,#5*0.6+#3*0.4) and (#4,#5*0.4+#3*0.6) .. (#4,#5);
	\draw (#4,#3) .. controls (#4,#5*0.6+#3*0.4) and (#2,#5*0.4+#3*0.6) .. (#2,#5);
	\draw[fill=white] (#2*0.5+#4*0.5,#5*0.5+#3*0.5) circle (4pt);
}

\newcommand{\rhoact}[5][black]{
	\draw[dotted,line cap=round,rounded corners=2pt,color=#1] (#2,#3) .. controls (#2,#5*0.4+#3*0.6) and (#4*0.5+#2*0.5,#5*0.4+#3*0.6) .. (#4*0.5+#2*0.5,#5*0.8+#3*0.2);
	\draw (#4,#3) .. controls (#4,#5*0.4+#3*0.6) and (#4*0.5+#2*0.5,#5*0.4+#3*0.6) .. (#4*0.5+#2*0.5,#5*0.8+#3*0.2);
	\draw (#4*0.5+#2*0.5,#5*0.7+#3*0.3) -- (#4*0.5+#2*0.5,#5);
	\fill (#4*0.5+#2*0.5,#5*0.7+#3*0.3) circle (4pt);
}

\newcommand{\morTi}[4]{
	\draw[dotted,line cap=round,rounded corners=2pt,color=OI5] (#1,#2) -- (#1,#2*0.75+#4*0.25) -- (#1*0.5+#3*0.5,#2*0.5+#4*0.5);
	\draw[dotted,line cap=round,rounded corners=2pt,color=OI5] (#3,#2) -- (#3,#2*0.75+#4*0.25) -- (#1*0.5+#3*0.5,#2*0.5+#4*0.5);
	\draw[dotted,line cap=round,color=OI5] (#1*0.5+#3*0.5,#4) -- (#1*0.5+#3*0.5,#2*0.5+#4*0.5);
	\draw[dotted,line cap=round,color=OI1] (#1*0.5+#3*0.5,#2) -- (#1*0.5+#3*0.5,#2*0.5+#4*0.5);
	\draw[dotted,line cap=round,rounded corners=2pt,color=OI1] (#1,#4) -- (#1,#2*0.25+#4*0.75) -- (#1*0.5+#3*0.5,#2*0.5+#4*0.5);
	\draw[dotted,line cap=round,rounded corners=2pt,color=OI1] (#3,#4) -- (#3,#2*0.25+#4*0.75) -- (#1*0.5+#3*0.5,#2*0.5+#4*0.5);
}

\newcommand{\morSi}[5][OI5]{
	\draw[dotted,line cap=round,rounded corners=2pt,color=#1] (#2,#3) -- (#2,#3*0.75+#5*0.25) -- (#2*0.5+#4*0.5,#3*0.5+#5*0.5);
	\draw[dotted,line cap=round,rounded corners=2pt,color=#1] (#4,#3) -- (#4,#3*0.75+#5*0.25) -- (#2*0.5+#4*0.5,#3*0.5+#5*0.5);
	\draw[rounded corners=2pt] (#2,#5) -- (#2,#3*0.25+#5*0.75) -- (#2*0.5+#4*0.5,#3*0.5+#5*0.5);
	\draw[dotted,line cap=round,rounded corners=2pt,color=#1] (#4,#5) -- (#4,#3*0.25+#5*0.75) -- (#2*0.5+#4*0.5,#3*0.5+#5*0.5);
	\draw[fill=white] (#2*0.5+#4*0.5-0.15,#5*0.5+#3*0.5-0.15) rectangle (#2*0.5+#4*0.5+0.15,#5*0.5+#3*0.5+0.15);
	\draw (#2*0.5+#4*0.5-0.15,#5*0.5+#3*0.5-0.15) -- (#2*0.5+#4*0.5+0.15,#5*0.5+#3*0.5+0.15);
	\draw (#2*0.5+#4*0.5+0.15,#5*0.5+#3*0.5-0.15) -- (#2*0.5+#4*0.5-0.15,#5*0.5+#3*0.5+0.15);
}

\newcommand{\morRi}[5][OI5]{
	\draw[dotted,line cap=round,rounded corners=2pt,color=#1] (#2,#3) -- (#2,#3*0.75+#5*0.25) -- (#2*0.5+#4*0.5,#3*0.5+#5*0.5);
	\draw[dotted,line cap=round,rounded corners=2pt,color=#1] (#4,#3) -- (#4,#3*0.75+#5*0.25) -- (#2*0.5+#4*0.5,#3*0.5+#5*0.5);
	\draw[rounded corners=2pt] (#2*0.5+#4*0.5,#5) -- (#2*0.5+#4*0.5,#3*0.5+#5*0.5);
	\draw[fill=white] (#2*0.5+#4*0.5-0.15,#5*0.5+#3*0.5-0.15) rectangle (#2*0.5+#4*0.5+0.15,#5*0.5+#3*0.5+0.15);
	\draw (#2*0.5+#4*0.5,#5*0.5+#3*0.5-0.15) -- (#2*0.5+#4*0.5,#5*0.5+#3*0.5+0.15);
}

\usepackage[capitalise]{cleveref}

\theoremstyle{plain}
\newtheorem{thm}{Theorem}[section]
\newtheorem*{thm*}{Theorem}

\newtheorem{prop}[thm]{Proposition}
\newtheorem{lem}[thm]{Lemma}
\newtheorem{cor}[thm]{Corollary}

\theoremstyle{definition}
\newtheorem{defn}[thm]{Definition}
\newtheorem*{defn*}{Definition}

\theoremstyle{remark}
\newtheorem{rmk}[thm]{Remark}
\newtheorem{expl}[thm]{Example}

\AddToHook{env/prop/begin}{\crefalias{thm}{prop}}
\AddToHook{env/lem/begin}{\crefalias{thm}{lem}}
\AddToHook{env/cor/begin}{\crefalias{thm}{cor}}
\AddToHook{env/conj/begin}{\crefalias{thm}{conj}}
\AddToHook{env/defn/begin}{\crefalias{thm}{defn}}
\AddToHook{env/notation/begin}{\crefalias{thm}{notation}}
\AddToHook{env/rmk/begin}{\crefalias{thm}{rmk}}
\AddToHook{env/expl/begin}{\crefalias{thm}{expl}}

\Crefname{thm}{Theorem}{Theorems}
\Crefname{thmintro}{Theorem}{Theorems}
\Crefname{lem}{Lemma}{Lemmata}
\Crefname{prop}{Proposition}{Propositions}
\Crefname{cor}{Corollary}{Corollaries}
\Crefname{conj}{Conjecture}{Conjectures}
\Crefname{defn}{Definition}{Definitions}
\Crefname{notation}{Notation}{Notations}
\Crefname{rmk}{Remark}{Remarks}

\numberwithin{equation}{section}

\usepackage{mysymbols}
\newcommand{\Vn}{\mathbb{H}_n}
\allowdisplaybreaks

\title{Quantum wreath products and Yang--Baxter equations}

\author{Alexandre Minets}
\address{Mathematisches Institut, University of Bonn, Endenicher Allee 60, 53115 Bonn, Germany}
\email{aminets@math.uni-bonn.de}

\begin{document}

\begin{abstract}
	We extend the setup of Lai--Nakano--Xiang's quantum wreath product algebras to symmetric monoidal categories and deformed braid relations.
	Under some mild assumptions, we drastically simplify the criterion for such algebras to have PBW basis; curiously, the new criterion is phrased in terms of associative and quantum Yang--Baxter equations.
	As an application, we clean up some basis results in the existing literature, and propose a certain generalization of quiver Hecke algebras.
\end{abstract}
\date{\today}
\maketitle

\section{Introduction}
\subsection{Quantum wreath products}
The Iwahori-Hecke algebra $\cal{H}_q = \cal{H}_q(\fkS_n)$ of type $A$ is a deformation of the group algebra $\bbk[\fkS_n]$.
Moving from finite to affine type $A$, we get to the affine Hecke algebra $\widehat{\cal{H}}_q$.
It is a classic result of Bernstein that $\widehat{\cal{H}}_q$ can be realized as a certain deformation of the wreath product algebra $\bbk[x^{\pm 1}]\wr \fkS_n$.
In the last 15 years, there arised a number of similar, but distinct deformations of wreath products $F\wr \fkS_n$, where $F$ is an algebra different from (Laurent) polynomials.
To list a few examples:
\begin{itemize}[leftmargin=15pt]
\item $F = Z[x]$, $Z$ the zigzag algebra of a Dynkin quiver: affine zigzag algebras~\cite{KM_SKAA2017};
\item $F = \bbk[\bbZ/(p-1)]\otimes \bbk[x^{\pm 1}]$: pro-$p$ Iwahori-Hecke algebras~\cite{Vigneras_2016,chlouveraki2016affine}; 
\item $F = \mathsf{Cl}\ltimes \bbk[x^{\pm 1}]$, $\mathsf{Cl}$ the Clifford algebra: affine Sergeev algebras~\cite{jones1999affine};
\item $F = \bbk[x]$, where $x$ is an \textit{odd} variable: odd nilHecke algebras~\cite{ellis2014odd};
\item $F = \bigoplus_{i\in I} \bbk[x]$, $I$ finite set: quiver Hecke algebras~\cite{KL_DACQ2009}.
\end{itemize}
There has been a number of attempts to put all these examples under one roof.
For example, Savage~\cite{Sav_AWPA2018} and Rosso--Savage~\cite{rosso2020quantum} introduced affine, resp. quantum affine wreath product algebras, which roughly speaking resemble degenerate affine, resp. affine Hecke algebras.
These two constructions can be treated uniformly~\cite{lai2025schurification,MS}.
A much more general setup was studied in~\cite{lai2024quantum}.
There, the starting point is an arbitrary deformation of wreath and quadratic relations:
\[
	H f = \sigma(f)H + \rho(f), \quad f\in F^{\otimes 2};  \qquad H^2 = SH + R, 
\]
where $\sigma,\rho\in \End_\bbk(F^{\otimes 2})$, and $S,R\in F^{\otimes 2}$.
Of course, there is no guarantee that the resulting algebra, called \textit{quantum wreath product} or QWP and denoted $F\wr \cal{H}_n$, has the same size as $F^{\otimes n}\otimes \bbk[\fkS_n]$, or more precisely a \textit{PBW basis} of the form $\{\mathbf{f}H_w : \mathbf{f}\in F^{\otimes n}, w\in \fkS_n\}$.
The main theorem of \textit{loc.~cit.}, which can be regarded as a classification result, gives an explicit list of necessary and sufficient conditions on the quadruple $(\sigma,\rho,S,R)$, such that $F\wr \cal{H}_n$ has PBW basis.
However, there are some sticking points:
\begin{itemize}[leftmargin=15pt]
	\item The braid relations are not deformed, so~\cite{lai2024quantum} does not cover e.g. quiver Hecke algebras;
	\item The list of relations is rather long (see e.g. \cref{ssec:alg-form}), and difficult to check in practice.
\end{itemize}

\subsection{Main result}
One of the first conditions that~\cite{lai2024quantum} tell us to check is that $\sigma$ is an algebra homomorphism, and $\rho$ is a left $\sigma$-derivation.
The present paper began as an observation that in all cases of interest, the skew derivation $\rho$ is \textit{inner}, i.e. of the form $\rho_r(x) = rx-\sigma(x)r$, where $r$ is an element of a localization of $F^{\otimes 2}$.
Building on this, we restrict our attention to a rich class of QWP-like algebras with deformed braid relation:
\begin{defn*}[\cref{defn:TIC}]
	Let $F$ be a unital algebra, and $\phi,\psi\in \Aut(F)$ algebra automorphisms.
	Assume that there exists a central element $c\in F$, such that $\phi(c) = \psi(c) = c$, and $c\otimes 1-1\otimes c$ is not a zero divisor.
	Fix $S,R,r\in F^{\otimes 2}$, $D,\overline{D},E\in F^{\otimes 3}$, and assume $(\phi\otimes \phi)(r) = (\psi\otimes \psi)(r) = r$.
	A \textit{TIC algebra}\footnote{The abbreviation TIC stands for ``\textbf{T}ressée, \textbf{I}ntérieure, en \textbf{C}ouronne''.
	It was chosen as a nod to RAFH algebras~\cite{MS}, as coming up with good axiomatics for QWP algebras is rough enough to develop a tic.} $F\wr \cal{H}_n$ is the algebra generated by $H_1,\ldots,H_{n-1}$ and a subalgebra isomorphic to $F^{\otimes n}$, modulo the relations
	\begin{gather*}
		H_i^2 = S_iH_i + R_i;\qquad H_i\mathbf{f} = \sigma_i(\mathbf{f})H_i + \rho_i(\mathbf{f}),\quad\mathbf{f}\in F^{\otimes n};\\
		H_{i+1} H_i H_{i+1} = H_i H_{i+1} H_i + D_i H_i + \overline{D}_iH_{i+1} + E_i;  \qquad H_j H_i = H_i H_j, \quad |i-j|>1,
	\end{gather*}
	where $\sigma(f_1\otimes f_2) \coloneqq \phi(f_2)\otimes \psi(f_1)$, and $\rho = \rho_r$ the inner $\sigma$-derivation.
\end{defn*}

While the notion of TIC algebra might seem a bit synthetic, it is general enough to cover all natural examples the author managed to come up with.
Our main result is that for TIC algebras, the equivalent conditions for the existence of PBW basis are dramatically simplified:
\begin{thm*}[\cref{thm:deformed-TIC-PBW},~\cref{cor:PBW-in-loc}]
	Let $F\wr \cal{H}_n$ be a TIC algebra, with deformation parameters $(S,R,D,\overline{D},E)$.
	Denote $\gamma\coloneqq R - \sigma(r)r$.
	If $F\wr \cal{H}_n$ has PBW basis, then $\gamma$ satisfies
	\begin{equation}\label{eq:gamma-intro}
		\sigma(\gamma) = \gamma, \qquad \gamma f = \sigma^2(f)\gamma \quad\text{for all }f\in F^{\otimes 2}.
	\end{equation}
	Conversely, for any $R\in F^{\otimes 2}$ such that $\gamma$ satisfies the conditions above, there exists the unique choice of $(S,D,\overline{D},E)$ such that $F\wr \cal{H}_n$ has PBW basis.
	Explicitly, these elements are given by
	\begin{gather*}
	S = r_{12} + r_{21}, \quad D = r_{23}r_{13} - r_{12}r_{23} - r_{13}r_{21}, \quad \overline{D} = r_{23}r_{12} + r_{13}r_{32} - r_{12}r_{13},\\
	E = r_{12}r_{13}r_{23} - r_{23}r_{13}r_{12} + r_{13}(R_{23}-R_{12}).
\end{gather*}
More generally, if $r$ belongs to a localization of $F^{\otimes 2}$, then $F\wr \cal{H}_n$ has PBW basis if and only if $\gamma$ satisfies~\eqref{eq:gamma-intro}, $(S,D,\overline{D},E)$ are given by the formulas above, and moreover $S \in F^{\otimes 2}$, $D,\overline{D},E \in F^{\otimes 3}$.
\end{thm*}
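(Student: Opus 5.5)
Since $\rho=\rho_r$ is inner, I will change generators to $T_i\coloneqq H_i - r_i$, where $r_i$ is $r$ placed in positions $i,i+1$ (working in the localization if necessary). This turns the mixed relation into the twisted commutation $T_i\mathbf{f}=\sigma_i(\mathbf{f})T_i$: indeed $H_i\mathbf{f}-r_i\mathbf{f} = \sigma_i(\mathbf{f})H_i + r_i\mathbf{f}-\sigma_i(\mathbf{f})r_i - r_i\mathbf{f} = \sigma_i(\mathbf{f})T_i$. The remaining relations are then rewritten in terms of $T_i$. For the quadratic one, $H_i^2 = (T_i+r_i)^2 = T_i^2 + \sigma_i(r_i)T_i + r_iT_i + \sigma_i(r_i)r_i$, so $H_i^2 = S_iH_i+R_i$ becomes
\[
T_i^2 = (S_i - r_i - \sigma_i(r_i))T_i + \bigl(R_i + S_ir_i - r_i^2 - \sigma_i(r_i)r_i\bigr).
\]
I write $\sigma(r)=r_{21}$ up to the twist by $\phi,\psi$; this is harmless because $r$ is fixed by $\phi\otimes\phi$ and $\psi\otimes\psi$. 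Similarly the braid relation becomes $T_{i+1}T_iT_{i+1} - T_iT_{i+1}T_i = D'_iT_i + \overline{D}'_iT_{i+1} + (\text{lower})$, with explicit $D',\overline{D}',E'$ expressed through $D,\overline{D},E,r,R$. PBW for the $H$'s is equivalent to PBW for the $T$'s over the localized ring, since the change of basis is triangular with respect to length. For the statement over $F$ itself I use that $F^{\otimes n}$ embeds in its localization, as $c\otimes1-1\otimes c$ is not a zero divisor.

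For the twisted generators I then run the usual diamond-lemma / Bergman argument, or invoke the earlier PBW criterion of \cref{ssec:alg-form} adapted to deformed braid relations. Because $T_i$ commutes with $F^{\otimes n}$ up to the automorphism $\sigma_i$, the ambiguities $T_i(T_i\mathbf{f})$, $T_iT_iT_i$ and the braid overlaps reduce to identities in $F^{\otimes n}$ and in the span of $T$-words.

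First, the overlap $T_i^2\mathbf{f}$ computed in two ways forces both coefficients of the quadratic relation to be $\sigma^2$-twisted central. The $T_i$-coefficient must also be compatible with $T_i^3$. Comparing the $T$-linear term with the twisted commutation, and using that the localization is a domain-like enough object (the nonzerodivisor $c\otimes1-1\otimes c$ acts differently under $\sigma$ and $\sigma^2$), should force the linear coefficient to vanish, giving $S=r_{12}+r_{21}$. The constant term then reduces to $\gamma = R-\sigma(r)r$. The overlap $T_i^3$ gives $\sigma(\gamma)=\gamma$, and $T_i^2\mathbf{f}$ gives $\gamma\mathbf{f}=\sigma^2(\mathbf{f})\gamma$, which is \eqref{eq:gamma-intro}. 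Second, the braid overlaps $T_{i+1}T_iT_{i+1}\mathbf{f}$ determine the coefficients. Since $T_{i+1}T_iT_{i+1}$ and $T_iT_{i+1}T_i$ both twist $\mathbf{f}$ by the same permutation, while $T_i$ and $T_{i+1}$ twist by different ones, the same nonzerodivisor argument forces $D'=\overline{D}'=E'=0$, i.e.\ undeformed braids for the $T$'s. Unwinding $D'=\overline{D}'=E'=0$ back through $H_i=T_i+r_i$ yields the stated formulas for $D,\overline{D},E$. Uniqueness follows because each was forced.

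Conversely, for $T$'s with undeformed braids and $T_i^2=\gamma_i$, I must check the remaining overlaps: $T_i^2T_{i+1}$ versus braid, and the braid with itself. These reduce to $\gamma$ being $\sigma$-invariant and twisted central, plus $\gamma_i$ being moved correctly past $T_{i+1}T_i$, which follows from the twisted commutation. This establishes PBW in the localization, and the case $r\in F^{\otimes 2}$ and the localized corollary follow.

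The main obstacle is the rigidity step showing the $T$-deformation terms must vanish. I would try it by specializing $\mathbf{f}$ to $c$ placed in the relevant tensor slots, where $\sigma$ acts by permutation because $\phi(c)=\psi(c)=c$, and cancelling the nonzerodivisor differences $c_j-c_k$.
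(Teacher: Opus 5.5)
Your route is the paper's. You renormalize $T_i=H_i-r_i$ so that $\rho$ disappears, use PBW together with \cref{lem:no-central} to kill the renormalized deformation terms (this is \cref{prop:rho-zero-nondef}), and read off the remaining conditions from the undeformed criterion (\cref{cor:PBW-for-rho-nil}). For sufficiency no diamond-lemma analysis is needed: the renormalized algebra is an honest QWP with $\rho=0$, so \cref{prop:LNX331} applies verbatim. However, your quadratic bookkeeping is wrong, and this changes the answer. Since $T_ir_i=\sigma_i(r_i)T_i$, one has $(T_i+r_i)^2=T_i^2+(r_i+\sigma_i(r_i))T_i+r_i^2$, with last term $r_i^2$ and not $\sigma_i(r_i)r_i$. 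Hence $T_i^2=S'_iT_i+R'_i$ with $S'=S-r-\sigma(r)$ and $R'=R+Sr-r^2$. The overlap $T_i^2\mathbf{f}$ gives $S'\sigma(\mathbf{f})=\sigma^2(\mathbf{f})S'$, so $S'$ is $\sigma$-central, not $\sigma^2$-central. That is exactly what lets \cref{lem:no-central} with $w=s_1$ give $S'=0$, and then $R'=R+\sigma(r)r$. So the element that must be $\sigma$-invariant and $\sigma^2$-central is $R+\sigma(r)r$, as in \cref{thm:deformed-TIC-PBW}, where $R=\gamma-\sigma(r)r$. The definition $\gamma=R-\sigma(r)r$ in the statement is a sign slip. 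Your displayed constant term would simply give $R$ after substituting $S$, yet you concluded $\gamma=R-\sigma(r)r$ anyway. The sign is irrelevant only when $\sigma(r)r$ is itself $\sigma$-invariant and $\sigma^2$-central. This holds in all examples of \cref{sec:expls} but not in general: take $F=\bbk[c]\otimes\mathrm{M}_2(\bbk)$, $\sigma=\tau$, $r=e_{12}\otimes e_{21}$, where $\sigma(r)r=e_{22}\otimes e_{11}$.

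Second, the overlap $T_2T_1T_2T_2=T_1T_2T_1T_2=T_1T_1T_2T_1$ forces $R'_1=\sigma_{21}(R'_2)$, and symmetrically $R'_2=\sigma_{12}(R'_1)$. Contrary to your claim, these do not follow from twisted commutation. For $\sigma=(\phi\otimes\psi)\circ\tau$ one computes $\sigma_{21}(R'_2)=((\phi\otimes\phi)R')_1$ and $\sigma_{12}(R'_1)=((\psi\otimes\psi)R')_2$. So they say $R'$ is $\phi\otimes\phi$- and $\psi\otimes\psi$-invariant, which is independent of $\sigma$-invariance and $\sigma^2$-centrality. For a counterexample, take $n\geq 3$, characteristic $\neq 2$, $r=0$, $F=\bbk[a,c]$, $\phi=\psi\colon a\mapsto -a$, $c\mapsto c$, and $R=a\otimes 1-1\otimes a$. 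That is why \eqref{eq:gamma-condition} lists these conditions; they are automatic only for $\sigma=\tau$. For the same reason, identifying $\sigma(r)$ with $r_{21}$ is not harmless. Invariance of $r$ under $\phi\otimes\phi$ and $\psi\otimes\psi$ says nothing about $(\phi\otimes\psi)\tau(r)$ versus $\tau(r)$; compare \cref{subs:mult-case}, where the twist is essential. So the parameters must be kept as $S=r+\sigma(r)$, $D=-Y(r)$, $\overline{D}=\overline{Y}(r)$, $E=B(r)$.

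Finally, the localized statement needs more than you give. The embedding $A\hookrightarrow A_\loc$ holds because the elements of $\mathscr{S}_x$ are non-zero-divisors, not because of $c_1-c_2$. It only shows that PBW for $(F\wr\cal{H}_n)_\loc$ implies PBW for $F\wr\cal{H}_n$. Your necessity argument renormalizes inside the localized algebra, so it also needs the converse. \cref{lem:localize} handles this by clearing denominators of a putative kernel element through left multiplication by elements of $\mathscr{S}_x$.
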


Why is this a useful result?
For a general Hecke-like algebra with deformed braid relation, the hardest condition to check in order to ensure the existence of a PBW basis is the so-called \textit{four-strand ambiguity}; see e.g. the final diagram in~\cite[(5.1)]{Elias-diamond}\footnote{Note that this paper predates~\cite{lai2024quantum}!}.
The set of conditions it imposes on the deformation parameters is prohibitively complicated to unwind.
However, our definition of TIC algebras allows to sidestep this thorny issue via an elementary trick:
\begin{itemize}[leftmargin=15pt]
	\item When $\rho = 0$, TIC algebra axioms imply that the cubic relation is not deformed (\cref{prop:rho-zero-nondef}), hence the main theorem of~\cite{lai2024quantum} applies;
	\item Renormalizing $H \mapsto H - r$, we reduce to the case $\rho = 0$;
	\item When the cubic relation is not deformed, the formulas for $S,D, \overline{D}, E$ are trivial to obtain.
\end{itemize}
Our simplified conditions furthermore allow to reverse the logic of constructing deformed quantum wreath products.
Instead of picking deformation parameters and painstakingly checking all required conditions, we can now treat the existence of PBW basis as an \textit{ansatz}.
Then having chosen $r, R\in F^{\otimes 2}$, the theorem immediately tells us what the rest of the parameters must be.

\subsection{Applications}
Our main theorem systematically recovers various basis theorems for affine Hecke-like algebras, listed in \cref{sec:expls}.
Let us highlight some examples:
\begin{enumerate}[leftmargin=20pt]
	\item We simplify the proof of basis theorem for PQWP~\cite{lai2025schurification} and skew PQWP~\cite{buciumas2026quantum} algebras (\cref{ssec:PQWP,ssec:skew-PQWP});
	\item We extend the notion of quantum affine wreath product algebras to non-symmetric Frobenius algebras (\cref{subs:mult-case}). This generalization turns out to be more subtle than suggested in~\cite[Rem.~2.8]{rosso2020quantum}. Namely, it requires $\psi$ in the definition of TIC algebra to be non-trivial;
	\item We propose a generalization of quiver Hecke algebras, where one is allowed to insert a Frobenius algebra at each vertex of the quiver (\cref{ssec:Frob-KLR}).
\end{enumerate}
We plan to extend the Schur--Weyl duality results in~\cite{lai2024quantum} to TIC algebras in future work~\cite{LM26}.
As a first step in this direction, we study whether $F\wr \cal{H}_n$ admits an action on the tensor space $F^{\otimes n}$ extending the left regular action (\cref{prop:action}).

Finally, note that the formulas for the coefficients $D, \overline{D}, E$ in the main theorem are nothing else than the \textit{associative and quantum Yang--Baxter equations}.
Moreover, quantum Yang--Baxter equation also arises when we try to construct a ``polynomial'' representation of a given TIC algebra (\cref{subs:polyrep},~\cref{cor:TIC-and-poly}).
This hints that TIC algebras are closely related to Lie bialgebras and their deformations (\cref{sec:Lie-bialg}), and gives the present paper its name.

\subsection*{Organization} 
In order to cover certain super-versions of Hecke algebras, we begin with the lengthy \cref{sec:QWPs}, which extends and reformulates the setup of QWP algebras from vector spaces to (symmetric) tensor categories.
While we hope this can be of use for the emerging study of representation theory in tensor categories, the reader not interested in such generalizations is invited to skip directly to \cref{sec:inv}.
There, we move away from diagrammatics to purely algebraic relations, deform the cubic relation of QWP algebras, and introduce a localized version of the setup, designed to treat various flavours of Demazure operators.
The definition of TIC algebras is given in \cref{sec:computations}, along with the basis theorem, which follows from~\cite{lai2024quantum} via a bag of simple tricks as discussed above.
\cref{sec:expls} is one big list of examples of TIC algebras.
Finally, we describe some situations when the tensor space representation of $F\wr \cal{H}_n$ exists in \cref{sec:poly-exist}.

\subsection*{Acknowledgments} It is the author's pleasure to thank Aaron Hofer, Chun-Ju Lai, and Leo Schelstraete for illuminating discussions related to this work.
I am grateful to Max Planck Institute for Mathematics in Bonn and University of Bonn for their hospitality and financial support.

\medskip
\section{Quantum wreath products}\label{sec:QWPs}
In this section, we recall and extend the basis theorem for quantum wreath product algebras from~\cite{lai2024quantum} to algebras in tensor categories.
The proofs in \textit{loc.~cit.} carry over almost verbatim; our contribution is in appropriately generalizing the setup.
Our main motivation for this is to be able to treat some superalgebras of interest in \cref{ssec:odd-Hecke,ssec:PQWP-super,ssec:Sergeev}.
We urge the reader who is only interested in usual $\bbk$-linear algebras to skip directly to \cref{sec:inv}.

\subsection{Categorical incantations}
Let $\bbk$ be a commutative ring, and fix a $\bbk$-linear strict rigid braided monoidal abelian category $\cal{C}$ with the unit object $\bb{1}$, satisfying $\End_{\cal{C}}(\bb{1}) = \bbk$.
We liberally use the string calculus for morphisms in monoidal categories, see e.g.~\cite{BakKir}.
Our convention is that the string diagrams are read from bottom to top.
For instance, the braiding $\tau = \tau_{M,N}: M\otimes N\to N\otimes M$ will be denoted by an overcrossing:
\[
\tikz[thick,xscale=.4,yscale=.4,font=\footnotesize]{
	\overcros{0}{0}{2}{2}
	\ntxt{0}{-0.5}{$M$}
	\ntxt{2}{-0.5}{$N$}
	\ntxt{0}{2.5}{$N$}
	\ntxt{2}{2.5}{$M$}
	\ntxt{-1.5}{1}{$\tau_{M,N}=$}

	\ntxt{2.7}{0.9}{,}

	\undercros{7}{0}{9}{2}
	\ntxt{7}{-0.5}{$M$}
	\ntxt{9}{-0.5}{$N$}
	\ntxt{7}{2.5}{$N$}
	\ntxt{9}{2.5}{$M$}
	\ntxt{5.5}{1}{$\tau^{-1}_{M,N}=$}

	\ntxt{9.7}{0.9}{.}
}
\]
It is sometimes convenient for us to work in the \textit{ind-completion} of $\cal{C}$, which we denote by $\vec{\cal{C}}$.
This category is naturally $\bbk$-linear strict braided monoidal abelian, but not rigid.

\begin{expl}
	One can take $\cal{C}$ to be any symmetric fusion category~\cite{EGNO}.
	We are mostly interested in the categories $\Vec$, $\sVec$ of finite dimensional vector, resp. super-vector spaces over a field $\bbk$.
	Note that passing to ind-completions $\vec{\Vec}$, $\vec{\sVec}$ is equivalent to forgetting about finite-dimensionality.
	For a more exotic example, one can set $\cal{C}$ to be the Verlinde category $\mathsf{Ver}_p$, $p>2$, see~\cite{Ostrik}.
\end{expl}

\subsection{Algebra objects}\label{sec:alg-obj}
Recall that a \textit{(unital) algebra} in $\cal{C}$ is the data of $(A,\mu,\eta)$, where $A\in \cal{C}$, $\mu: A\otimes A\to A$, $\eta:\bb{1}\to A$, such that the following diagrams commute:
\[
\begin{tikzcd}
	A\otimes A\otimes A \ar[r,"\mu\otimes \id"]\ar[d,"\id\otimes \mu"] & A\otimes A\ar[d,"\mu"]\\
	A\otimes A\ar[r,"\mu"] & A
\end{tikzcd}\qquad
\begin{tikzcd}
	\bb{1}\otimes A\ar[dr,"\simeq"']\ar[r,"\eta\otimes \id"] & A\otimes A\ar[d,"\mu"] & A\otimes \bb{1}\ar[dl,"\simeq"]\ar[l,"\id\otimes \eta"'] \\
	& A &
\end{tikzcd}
\]
We often drop either $\eta$ or both $\mu$ and $\eta$ from the notation, when they are clear from context.
In terms of string calculus, this definition amounts to
\[
\tikz[thick,xscale=.4,yscale=.4,font=\footnotesize,baseline=(current  bounding  box.center)]{
	\dmerge{0}{0}{2}{1.5}
	\draw (1,1.5) -- (1,2);
	\ntxt{-1}{1}{$\mu=$}
	\ntxt{2.4}{0.9}{,}
}
\tikz[thick,xscale=.4,yscale=.4,font=\footnotesize,baseline=(current  bounding  box.center)]{
	\identity{0}{0}{2}
	\ntxt{-1}{1}{$\eta=$}
	\ntxt{0.7}{0.9}{;}
}\qquad
\tikz[thick,xscale=.4,yscale=.4,font=\footnotesize,baseline=(current  bounding  box.center)]{
	\dmerge{0}{0}{1}{0.8}
	\draw (2,0) -- (2,0.8);
	\dmerge{0.5}{0.8}{2}{1.8}
	\draw (1.25,1.8) -- (1.25,2);

	\ntxt{3}{0.9}{$=$}

	\draw (4,0) -- (4,0.8);
	\dmerge{5}{0}{6}{0.8}
	\dmerge{4}{0.8}{5.5}{1.8}
	\draw (4.75,1.8) -- (4.75,2);

	\ntxt{6.4}{0.9}{,}
}\quad
\tikz[thick,xscale=.4,yscale=.4,font=\footnotesize,baseline=(current  bounding  box.center)]{
	\dmerge{0}{0.8}{1}{1.8}
	\draw (0.5,1.8) -- (0.5,2);
	\identity{0}{0}{0.8}
	\draw (1,0) -- (1,0.8);

	\ntxt{2}{0.9}{$=$}

	\draw (3,0) -- (3,2);

	\ntxt{4}{0.9}{$=$}

	\dmerge{5}{0.8}{6}{1.8}
	\draw (5.5,1.8) -- (5.5,2);
	\draw (5,0) -- (5,0.8);
	\identity{6}{0}{0.8}

	\ntxt{6.4}{0.9}{.}
}
\]

\begin{expl}\label{ex:construct-algs}
We have several simple ways to construct new algebras out of the existing ones:
\begin{enumerate}
	\item Let $A,B\in \cal{C}$ be two algebras. The product $A\otimes B$ has two natural algebra structures. We set $\eta_{A\otimes B} = \eta_A\otimes \eta_B$ in both cases, and the products are given by 
	\[
	\tikz[thick,xscale=.4,yscale=.4,font=\footnotesize,baseline=(current  bounding  box.center)]{
		\ntxt{-1}{1.5}{$\mu = $}
		\draw (0,0) -- (0,1.8);
		\overcros{1}{0}{3}{1.8}
		\draw (4,0) -- (4,1.8);
		\dmerge{0}{1.8}{1}{2.7}
		\dmerge{3}{1.8}{4}{2.7}
		\draw (0.5,2.7) -- (0.5,3);
		\draw (3.5,2.7) -- (3.5,3);
		\ntxt{4.4}{1.3}{,}
		\ntxt{0}{-0.5}{$A$}
		\ntxt{1}{-0.5}{$B$}
		\ntxt{3}{-0.5}{$A$}
		\ntxt{4}{-0.5}{$B$}
	}\qquad 
	\tikz[thick,xscale=.4,yscale=.4,font=\footnotesize,baseline=(current  bounding  box.center)]{
		\ntxt{-1.1}{1.5}{$\mu' = $}
		\draw (0,0) -- (0,1.8);
		\undercros{1}{0}{3}{1.8}
		\draw (4,0) -- (4,1.8);
		\dmerge{0}{1.8}{1}{2.7}
		\dmerge{3}{1.8}{4}{2.7}
		\draw (0.5,2.7) -- (0.5,3);
		\draw (3.5,2.7) -- (3.5,3);
		\ntxt{4.4}{1.3}{.}
		\ntxt{0}{-0.5}{$A$}
		\ntxt{1}{-0.5}{$B$}
		\ntxt{3}{-0.5}{$A$}
		\ntxt{4}{-0.5}{$B$}
	}
	\]
	Unless otherwise specified, we always use the first product.
	In particular, $A^{\otimes n}$ is an algebra for all $n\geq 1$. 
	\item Given an object $H\in \cal{C}$, consider the ind-object $T(H)\coloneqq \bigoplus_{k\geq 0}^\infty H^{\otimes k}$.
	This is naturally an algebra in $\vec{\cal{C}}$, called the \textit{tensor algebra} of $H$, for $\mu: H^{\otimes i}\otimes H^{\otimes j}\simto H^{\otimes (i+j)}$.
	\item Let $I\in \cal{C}$, and $i\in \Hom_{\cal{C}}(I,A)$. Define 
	\[
		A/\langle I\rangle\coloneqq \coker(A\otimes I\otimes A\xra{\id\otimes i\otimes \id}A^{\otimes 3}\xra{\mu\circ (\mu\otimes \id)} A).
	\]
	The product on $A$ induces a map $\mu: (A/\langle I\rangle )^{\otimes 2}\to A/\langle I\rangle$, and setting $\eta$ to be the composition $\bb{1}\xra{\eta}A\twoheadrightarrow A/\langle I\rangle$, we get an algebra $(A/\langle I\rangle,\mu,\eta)$.
\end{enumerate}
\end{expl}

Let $A\in\vec{\cal{C}}$ be an (ind-)algebra, $H\in \cal{C}$ an object, and consider $A\otimes T(H)\in\vec{\cal{C}}$.
By (1) and (2) above, this is an algebra in $\vec{\cal{C}}$; however, it admits many more algebra structures.
Let us draw $H$ as a dotted line, and choose two morphisms:
\[
	\tikz[thick,xscale=.4,yscale=.4,font=\footnotesize,baseline=(current  bounding  box.center)]{
		\ntxt{-1.4}{1}{$\sigma=$}
		\ntxt{0}{-0.5}{$H$}
		\ntxt{2}{-0.5}{$A$}
		\sigact{0}{0}{2}{2}
		\ntxt{6.5}{1}{$\in \Hom_{\cal{C}}(H\otimes A, A\otimes H)$,}
	}\qquad 
	\tikz[thick,xscale=.4,yscale=.4,font=\footnotesize,baseline=(current  bounding  box.center)]{
		\ntxt{-1.2}{1}{$\rho=$}
		\ntxt{0}{-0.5}{$H$}
		\ntxt{2}{-0.5}{$A$}
		\rhoact{0}{0}{2}{2}
		\ntxt{6}{1}{$\in \Hom_{\cal{C}}(H\otimes A, A)$.}
	}
\]
Consider the map $\mu: (A\otimes T(H))^{\otimes 2}\to A\otimes T(H)$, where $\mu = \sum_{k,j\geq 0} \mu_{k,j}$, and
\begin{equation}\label{eq:mu-kj}
	\begin{gathered}
		\mu_{k,j}: (A\otimes H^{\otimes k})\otimes (A\otimes H^{\otimes j})\to \bigoplus_{i=0}^k A\otimes H^{\otimes i+j},\\
		\mu_{k,j} = (\mu_A\otimes \id)\circ(\id_{A}\otimes (\id\otimes\sigma\otimes\id + \id\otimes\rho\otimes\id)^{k} \otimes \id_{H^{\otimes j}}).
	\end{gathered}
\end{equation}
Let us elucidate the meaning of the symbol $(\id\otimes\sigma\otimes\id + \id\otimes\rho\otimes\id)^{k}$ above.
This is a map $H^{\otimes k}\otimes A\to \bigoplus_{i\leq k} A\otimes H^{\otimes i}$.
Each time we apply $\sigma$, it moves $A$ past one copy of $H$, and $\rho$ removes a copy of $H$ to the left of $A$.
So we apply either $\sigma$ or $\rho$ at an appropriate place, and $(\id\otimes - \otimes \id)$ serves to remind that we have some extra copies of $H$ on both sides.
Observe that $\mu$ extends the products on $T(H)\simeq \bb{1}\otimes T(H)$ and $A\simeq A\otimes \bb{1}$ inside $A\otimes T(H)$.

\begin{expl}
	The map $\mu_{2,0}$ is given by the following diagram:
	\[
	\tikz[thick,xscale=.4,yscale=.4,font=\footnotesize,baseline=(current  bounding  box.center)]{
		\ntxt{-1.4}{1.4}{$\mu_{2,0}=$}
		\draw (0,0) -- (0,2);
		\draw[dotted,line cap=round,rounded corners] (1,0) -- (1,1);
		\sigact{2}{0}{3}{1} 
		\sigact{1}{1}{2}{2} 
		\dmerge{0}{2}{1}{2.7}
		\draw (0.5,2.7) -- (0.5,3);
		\draw[dotted,line cap=round,rounded corners] (2,2) -- (2,3);
		\draw[dotted,line cap=round,rounded corners] (3,1) -- (3,3);
		\ntxt{3.5}{1.4}{$+$}

		\begin{scope}[xshift=4cm]
			\draw (0,0) -- (0,2);
			\draw[dotted,line cap=round,rounded corners] (1,0) -- (1,1);
			\sigact{2}{0}{3}{1} 
			\rhoact{1}{1}{2}{2} 
			\dmerge{0}{2}{1.5}{2.7}
			\draw (0.75,2.7) -- (0.75,3);
			\draw[dotted,line cap=round,rounded corners] (3,1) -- (3,3);
			\ntxt{3.5}{1.4}{$+$}
		\end{scope}

		\begin{scope}[xshift=8cm]
			\draw (0,0) -- (0,2);
			\draw[dotted,line cap=round,rounded corners] (1,0) -- (1,1);
			\rhoact{2}{0}{3}{1} 
			\sigact{1}{1}{2.5}{2} 
			\dmerge{0}{2}{1}{2.7}
			\draw (0.5,2.7) -- (0.5,3);
			\draw[dotted,line cap=round,rounded corners] (2.5,2) -- (2.5,3);
			\ntxt{3.5}{1.4}{$+$}
		\end{scope}

		\begin{scope}[xshift=12cm]
			\draw (0,0) -- (0,2);
			\draw[dotted,line cap=round,rounded corners] (1,0) -- (1,1);
			\rhoact{2}{0}{3}{1} 
			\rhoact{1}{1}{2.5}{2} 
			\dmerge{0}{2}{1.75}{2.7}
			\draw (0.875,2.7) -- (0.875,3);
			\ntxt{3.3}{1.3}{.}
		\end{scope}
	}
	\]
\end{expl}

\begin{prop}\label{prop:tens-alg}
	Let $\eta = \eta_A\otimes \eta_{T(H)}$, and $\mu$ as above.
	Write $\sigma_{12}\coloneqq \sigma\otimes \id_A$, $\sigma_{23}\coloneqq \id_A\otimes\sigma$, and same for $\rho$.
	Then $(A\otimes T(H),\mu,\eta)$ is an algebra in $\vec{\cal{C}}$ if and only if the following conditions hold:
	\begin{gather}
		\label{eq:P-unit}\sigma\circ (\id_H\otimes \eta_A) = \eta_A\otimes \id_H, \qquad \rho\circ (\id_H\otimes\eta_A) = 0,\\
		\label{eq:P-der}\sigma\circ (\id_H\otimes\mu_A) = (\mu_A\otimes \id_H)\circ\sigma_{23}\circ\sigma_{12},\qquad \rho\circ(\id_H\otimes\mu_A) = \mu_A\circ (\rho_{23}\circ \sigma_{12} + \rho_{12}).
	\end{gather}
\end{prop}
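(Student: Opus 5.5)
The plan is to check the two directions separately. Necessity comes from evaluating associativity and unitality on small inputs. Sufficiency comes from packaging $\sigma,\rho$ into a single ``action'' map and verifying associativity by induction on the number of $H$-strands.

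\emph{Necessity.} Suppose $(A\otimes T(H),\mu,\eta)$ is an algebra.

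Restrict the unit axiom $\mu\circ(\id\otimes\eta)=\id$ to the summand $A\otimes H$. By \eqref{eq:mu-kj}, $\mu_{1,0}$ restricted to $(A\otimes H)\otimes(\bb{1}\otimes\bb{1})$ equals
\[
(\mu_A\otimes\id_H)\circ(\id_A\otimes\sigma\circ(\id_H\otimes\eta_A)) + \mu_A\circ(\id_A\otimes\rho\circ(\id_H\otimes\eta_A)).
\]
The first term lands in $A\otimes H$ and the second in $A$. Comparing components with $\id_{A\otimes H}$ and precomposing with $\eta_A\otimes\id_H\otimes\id$ gives \eqref{eq:P-unit}.

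Next, compare the two bracketings of associativity on $(\bb{1}\otimes H)\otimes(A\otimes\bb{1})\otimes(A\otimes\bb{1})$:
\begin{itemize}
\item $(h\cdot a)\cdot b$ is computed by applying $\sigma+\rho$ to $(h,a)$, then commuting the result past $b$.
\item $h\cdot(a\cdot b)$ equals $\sigma\circ(\id\otimes\mu_A)+\rho\circ(\id\otimes\mu_A)$.
\end{itemize}
Projecting onto $A\otimes H$ and $A$ yields exactly the two identities in \eqref{eq:P-der}. Here the $\rho_{12}$ term arises because once $\rho$ has consumed $H$, the element $\rho(h,a)$ simply multiplies $b$ in $A$.

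\emph{Sufficiency.} Define the ind-morphism $\Phi\colon T(H)\otimes A\to A\otimes T(H)$ by $\Phi|_{H^{\otimes k}\otimes A}=(\id\otimes\sigma\otimes\id+\id\otimes\rho\otimes\id)^k$. Then
\[
\mu=(\mu_A\otimes\mu_{T(H)})\circ(\id_A\otimes\Phi\otimes\id_{T(H)}).
\]
This is the standard factorization-algebra (distributive law) setup. In Tambara-style form, the product on $A\otimes T(H)$ is associative and unital as soon as:
\begin{enumerate}
\item $\Phi$ is compatible with $\mu_{T(H)}$, i.e.\ $\Phi\circ(\mu_{T(H)}\otimes\id_A)=(\id\otimes\mu_{T(H)})\circ(\Phi\otimes\id)\circ(\id\otimes\Phi)$;
\item $\Phi$ is compatible with $\mu_A$;
\item $\Phi$ is compatible with both units.
\end{enumerate}

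Condition (1) holds by construction: $\Phi$ on $H^{\otimes(k+j)}$ is defined by composing the one-strand moves, and concatenation in $T(H)$ is strictly associative.

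Condition (3) for $\eta_{T(H)}$ is trivial. For $\eta_A$, \eqref{eq:P-unit} handles one strand, and induction on $k$ handles the rest.

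The main work is (2):
\[
\Phi\circ(\id_{T(H)}\otimes\mu_A)=(\mu_A\otimes\id)\circ(\id_A\otimes\Phi)\circ(\Phi\otimes\id_A)\quad\text{on } H^{\otimes k}\otimes A\otimes A.
\]
Prove this by induction on $k$. The case $k=0$ is trivial, and the case $k=1$ is precisely \eqref{eq:P-der}, after splitting $\Phi|_H=\sigma+\rho$ and reading off the $A\otimes H$ and $A$ components. For the inductive step, write $H^{\otimes(k+1)}=H^{\otimes k}\otimes H$, so that $\Phi_{k+1}=(\Phi_k\otimes\id)\circ(\id\otimes\Phi_1)$ by (1). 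Apply the $k=1$ case to move $\mu_A$ past the innermost strand. Then apply the inductive hypothesis to the remaining $k$ strands; this is legitimate because the outputs of $\Phi_1$ lie in $A\otimes H^{\le 1}$, and $\Phi_k$ is applied summand-wise. Naturality of the strict monoidal structure lets us slide the remaining morphisms past each other.

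The step I expect to be the main obstacle is the bookkeeping of this induction in string calculus. It involves keeping track of which summand each $\rho$ term lands in, and checking that the sum over ``$\sigma$ or $\rho$'' choices on both sides matches term by term. Both sides are sums indexed by subsets of strands where $\rho$ is applied, so I would organize the comparison as a bijection between these index sets.

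With (1)–(3) in hand, associativity of $\mu$ follows formally. One expands $(xy)z$ and $x(yz)$ and uses (1), (2), associativity of $\mu_A$ and $\mu_{T(H)}$, and naturality of the braiding where factors are interchanged. Unitality follows from (3).
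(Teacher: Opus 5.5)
Your proposal is correct and follows essentially the paper's route: necessity comes from evaluating unitality on $A\otimes H$ and associativity on $H\otimes A\otimes A$, and sufficiency from an induction on the number of strands that proves compatibility of $(\sigma+\rho)^k$ with $\mu_A$ (the paper's \eqref{eq:higher-der}), to which associativity then reduces. Phrasing that reduction through Tambara's twisting-map conditions is only a cleaner packaging of the paper's ``shedding irrelevant factors'' step, and no braiding is actually needed there.
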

In terms of string calculus, the conditions (\ref{eq:P-unit},~\ref{eq:P-der}) are written as follows:
\begin{gather}\label{diag:wr}\tag{D1}
	\tikz[thick,xscale=.4,yscale=.4,font=\footnotesize,baseline=(current  bounding  box.center)]{
		\draw [dotted,line cap=round,rounded corners] (0,0) -- (0,1);
		\identity{1}{0}{1}
		\sigact{0}{1}{1}{2}
		\ntxt{2}{0.8}{$=$}
		\draw [dotted,line cap=round,rounded corners] (4,0) -- (4,2);
		\identity{3}{0}{2}
		\ntxt{4.5}{0.8}{,}
	}\qquad
	\tikz[thick,xscale=.4,yscale=.4,font=\footnotesize,baseline=(current  bounding  box.center)]{
		\draw [dotted,line cap=round,rounded corners] (0,0) -- (0,1);
		\identity{1}{0}{1}
		\rhoact{0}{1}{1}{2}
		\ntxt{2}{0.8}{$=0,$}
	}\\ \label{diag:wr2}\tag{D2}
	\tikz[thick,xscale=.4,yscale=.4,font=\footnotesize,baseline=(current  bounding  box.center)]{
		\draw [dotted,line cap=round,rounded corners] (0,0) -- (0,1);
		\dmerge{1}{0}{2}{1}
		\sigact{0}{1}{1.5}{2}
		\ntxt{3}{0.8}{$=$}
		\curline{4}{1}{5}{0}
		\curline{5}{1}{6}{0}
		\curlinedot{4}{0}{6}{1}
		\strdotwhite{4.64}{0.42}
		\strdotwhite{5.34}{0.57}
		\draw [dotted,line cap=round,rounded corners] (6,1) -- (6,2);
		\dmerge{4}{1}{5}{2}
		\ntxt{6.5}{0.8}{,}
	}\qquad
	\tikz[thick,xscale=.4,yscale=.4,font=\footnotesize,baseline=(current  bounding  box.center)]{
		\draw [dotted,line cap=round,rounded corners] (0,0) -- (0,1);
		\dmerge{1}{0}{2}{1}
		\rhoact{0}{1}{1.5}{2}
		\ntxt{3}{0.8}{$=$}
		\curline{4}{1}{5}{0}
		\curline{5}{1}{6}{0}
		\curlinedot{4}{0}{6}{1}
		\strdotwhite{4.64}{0.42}
		\fill[white] (5.34,0.57) rectangle (6.1,1);
		\strdot{5.34}{0.57}
		\dmerge{4}{1}{5}{2}
		\ntxt{6.5}{0.8}{$+$}
		\rhoact{7}{0}{8}{1}
		\draw (9,0) -- (9,1);
		\dmerge{7.5}{1}{9}{2}
		\ntxt{9.5}{0.8}{.}
	}
\end{gather}
Note that the second condition of~\eqref{eq:P-unit} follows from the other three conditions. Indeed,
\[
	\tikz[thick,xscale=.4,yscale=.4,font=\footnotesize,baseline=(current  bounding  box.center)]{
		\ntxt{-3}{1.3}{$\rho\circ (\id_H\otimes\eta_A)=$}

		\draw [dotted,line cap=round,rounded corners] (0,0) -- (0,2);
		\identity{1}{0}{2}
		\rhoact{0}{2}{1}{3}
		\ntxt{2}{1.3}{$=$}
		
		\draw [dotted,line cap=round,rounded corners] (3,0) -- (3,2);
		\identity{4}{0}{1}
		\identity{5}{0}{1}
		\dmerge{4}{1}{5}{2}
		\rhoact{3}{2}{4.5}{3}
		\ntxt{6}{1.3}{$=$}

		\curline{7}{2}{8}{1}
		\curline{8}{2}{9}{1}
		\curlinedot{7}{1}{9}{2}
		\strdotwhite{7.64}{1.42}
		\fill[white] (8.34,1.57) rectangle (9.1,2);
		\strdot{8.34}{1.57}
		\dmerge{7}{2}{8}{3}
		\draw [dotted,line cap=round,rounded corners] (7,0) -- (7,1);
		\identity{8}{0}{1}
		\identity{9}{0}{1}
		\ntxt{9.5}{1.3}{$+$}
		\rhoact{10}{1}{11}{2}
		\draw (12,1) -- (12,2);
		\dmerge{10.5}{2}{12}{3}
		\draw [dotted,line cap=round,rounded corners] (10,0) -- (10,1);
		\identity{11}{0}{1}
		\identity{12}{0}{1}
		\ntxt{13}{1.3}{$=$}

		\draw [dotted,line cap=round,rounded corners] (14,0) -- (14,2);
		\identity{15}{0}{2}
		\rhoact{14}{2}{15}{3}
		\ntxt{15.5}{1.3}{$+$}
		\draw [dotted,line cap=round,rounded corners] (16,0) -- (16,2);
		\identity{17}{0}{2}
		\rhoact{16}{2}{17}{3}
		\ntxt{17.5}{1.3}{,}
	}
\]
and so $\rho\circ (\id_H\otimes\eta_A)$ must vanish.

\begin{proof}[Proof of \cref{prop:tens-alg}]
	Let us denote $\pi = \sigma + \rho: H\otimes A\to (A\otimes H) \oplus A$ for brevity.
	First, let us prove that the conditions (\ref{eq:P-unit},~\ref{eq:P-der}) are necessary.
	For this, we restrict our attention to $A \oplus (A\otimes H) \subset A\otimes T(H)$.
	Using the unitality condition on $\bb{1}\otimes H$, we obtain
	\begin{align*}
		\eta_A\otimes \id_H 
		= (\mu\otimes\id_{H})\circ (\id_A\otimes \pi)\circ (\eta_A\otimes \id_H\otimes \eta_A)
		= \sigma\circ(\id_H\otimes \eta_A) + \rho\circ(\id_H\otimes \eta_A),
	\end{align*}
	which yields the equations~\eqref{eq:P-unit}.
	Using the associativity condition on $(\bb{1}\otimes H)\otimes A\otimes A$, we obtain
	\begin{align*}
		\mu\circ(\id\otimes \mu)
		&= (\mu_A\otimes \id_H, \mu_A)\circ(\id_A\otimes \pi)\circ(\eta_A \otimes\id_{H}\otimes \mu_A)
		= \pi\circ(\id_H\otimes \mu_A),\\
		\mu\circ(\mu\otimes \id)
		&= (\mu_A\otimes \id_H, \mu_A)\circ (\id_A\otimes\pi,\id_{A\otimes A})\circ (\mu_A\otimes \id_{H\otimes A},\mu_A\otimes \id_{A})\circ (\eta_A\otimes \pi\otimes \id_A)\\
		& = \mu_A\circ (\pi, \id_{A\otimes A})\circ (\pi\otimes \id_A) = \mu_A\circ ((\id_A\otimes \rho)\circ (\sigma\otimes\id_A) + \rho\otimes\id_{A}),
	\end{align*}
	which yields equations~\eqref{eq:P-der}. 

	Now let us assume that both~\eqref{eq:P-unit} and~\eqref{eq:P-der} hold.
	The left triangle of the unitality condition is obviously commutative.
	Evaluating the right triangle on $A\otimes H^{\otimes k}$, we have
	\begin{align*}
		\mu\circ(\id_{A\otimes H^{\otimes k}}\otimes \eta_A)
		&= \mu_A \circ (\id_A\otimes\pi^k)\circ (\id_{A\otimes H^{\otimes k}}\otimes \eta_A)\\
		&\stackrel{\eqref{eq:P-unit}}{=} \mu_A\circ (\id_A\otimes\eta_A\otimes\id_{H^{\otimes k}})
		= \id_A\otimes \id_{H^{\otimes k}},
	\end{align*}
	and so it also commutes.
	For the associativity, let $W_k$ be the set of words in letters $\sigma, \rho$ of length $k$, and for $w\in W_k$ denote by $\sigma(w)$ the amount of $\sigma$'s occurring in $w$.
	A simple induction argument using~\eqref{eq:P-der} as the inductive step shows that 
	\begin{equation}\label{eq:higher-der}
		(\sigma+\rho)^k\circ (\id_{H^{\otimes k}}\otimes\mu_A) = (\mu_A\otimes \id)\circ \left(\sum_{w\in W_k}(\id_A\otimes (\sigma+\rho)^{\sigma(w)})\circ(w\otimes\id_A)\right).
	\end{equation}
	Shedding irrelevant factors from the associativity diagram, we need to prove that, for any $k,l\geq 0$ we have  
	\[
		\pi^k\circ (\id_{H^{\otimes k}}\otimes \mu_A\otimes \id)\circ (\id_{H^{\otimes k}\otimes A}\otimes \pi^l) 
		= (\mu_A\otimes \id)\circ (\id_A\otimes (\pi^{k+l},\ldots,\pi^l)) \circ (\pi^k\otimes \id_{H^{\otimes l}\otimes A}).
	\]
	Note that we can further remove the $H^{\otimes l}$-factor, and so the precomposition by $\pi^l$.
	It remains to prove the following equality of maps with source $H^{\otimes k}\otimes A\otimes A$:
	\[
		(\pi_k\otimes \id)\circ (\id_{H^{\otimes k}}\otimes \mu_A)
		= (\mu_A\otimes \id)\circ (\id_A\otimes (\pi^{k},\ldots,\id)) \circ (\pi^k\otimes \id_{A}).
	\]
	This is exactly the equation~\eqref{eq:higher-der}, written in a different form.
\end{proof}

The following two claims are proved by direct computation.

\begin{lem}\label{lem:extra-lines}
	Let $H\in \cal{C}$, and $\sigma:H\otimes A\to A\otimes H$, $\rho: H\otimes A\to A$ satisfy the conditions~(\ref{eq:P-unit},~\ref{eq:P-der}).
	Pick two other algebras $B_1,B_2\in\cal{C}$, and consider the product $B_1\otimes A\otimes B_2$.
	Then the maps 
	\begin{align*}
	\sigma' & \coloneqq (\id_{B_1\otimes A}\otimes \tau^{-1}_{H,B_2})\circ(\id_{B_1}\otimes \sigma\otimes \id_{B_2})\circ(\tau_{H,B_1}\otimes \id_{A\otimes B_2}),\\
	\rho' & \coloneqq (\id_{B_1}\otimes \rho\otimes \id_{B_2})\circ(\tau_{H,B_1}\otimes \id_{A\otimes B_2})
	\end{align*}
	also satisfy~(\ref{eq:P-unit},~\ref{eq:P-der}), and so turn $(B_1\otimes A\otimes B_2)\otimes T(H)$ into an algebra.\qed
\end{lem}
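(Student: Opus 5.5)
We verify the four conditions (\ref{eq:P-unit},~\ref{eq:P-der}) for $\sigma',\rho'$ directly in string calculus, with the algebra structure on $B_1\otimes A\otimes B_2$ being the first product of \cref{ex:construct-algs}(1) (iterated). The final assertion then follows from \cref{prop:tens-alg}. Throughout, we use only two facts about the braiding. First, naturality: a morphism can slide through a crossing. Second, the hexagon axioms: a crossing of $H$ with a tensor product $X\otimes Y$ splits into two consecutive crossings. Diagrammatically, $\sigma'$ is the $H$-strand passing over $B_1$, then hitting $A$ via $\sigma$, then passing under $B_2$ (via $\tau^{-1}$). $\rho'$ is the $H$-strand passing over $B_1$ and being absorbed into $A$ via $\rho$.

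\textbf{Unit conditions.} The unit of $B_1\otimes A\otimes B_2$ is $\eta_{B_1}\otimes\eta_A\otimes\eta_{B_2}$.
\begin{itemize}
\item For $\sigma'$, slide $\eta_{B_1}$ through $\tau_{H,B_1}$ and $\eta_{B_2}$ through $\tau^{-1}_{H,B_2}$; these crossings then become trivial. The remaining piece is $\sigma\circ(\id_H\otimes\eta_A)=\eta_A\otimes\id_H$.
\item For $\rho'$, the same sliding reduces the claim to $\rho\circ(\id_H\otimes\eta_A)=0$.
\end{itemize}

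\textbf{Multiplicativity of $\sigma'$.} The product $\mu'$ on $B_1\otimes A\otimes B_2$ is the following composite: cross the second $B_1$ under or over the first $A\otimes B_2$ via braidings, cross $A$ past $B_2$, then multiply componentwise.
\begin{itemize}
\item On the left-hand side $\sigma'\circ(\id_H\otimes\mu')$, the $H$-strand passes over the product in $B_1$. By naturality it may be pushed below the multiplication, so it passes over both $B_1$ factors separately.
\item Likewise, the under-crossing with $B_2$ can be pushed below $\mu_{B_2}$.
\item At the $A$-component we apply \eqref{eq:P-der} for $\sigma$, splitting $\sigma$ into two $\sigma$'s applied successively to the two $A$-factors.
\end{itemize}
What remains is an isotopy check: the $H$-strand must cross the various braidings of $\mu'$ in a way consistent with the right-hand side $(\mu'\otimes\id_H)\circ\sigma'_{23}\circ\sigma'_{12}$. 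This is where the choice of over-crossing on $B_1$ and under-crossing on $B_2$ matters. In $\mu'$, the strands being rearranged are $A$ and $B_2$ from the first factor crossing over $B_1$ of the second factor, and $A$ of the second factor crossing ... These crossings are arranged so that the $H$-strand, which passes over everything on its left and under everything on its right, never has to change its crossing type relative to $\mu'$'s internal crossings. Hence the two diagrams are isotopic in the braided setting via Reidemeister III moves (the Yang–Baxter equation for $\tau$ and naturality).

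\textbf{Derivation property of $\rho'$.} The same manipulation applies. Push the over-crossing with $B_1$ below the multiplication, then use \eqref{eq:P-der} for $\rho$ on the $A$-part. This produces two terms.
\begin{itemize}
\item The term $\rho_{12}$ gives $\mu'\circ\rho'_{12}$ directly.
\item The term $\rho_{23}\circ\sigma_{12}$ gives $\mu'\circ\rho'_{23}\circ\sigma'_{12}$. Here the under-crossing of $H$ with the first $B_2$ inside $\sigma'_{12}$ has to match the way the second factor's $B_1$ and $A$ are braided past the first $B_2$ in $\mu'$. The $H$-strand ends in $\rho$ on the second $A$, so it must travel under the first $B_2$ and over the second $B_1$. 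This is exactly the pattern consistent with the crossings in $\mu'$.
\end{itemize}

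\textbf{Main obstacle.} I expect the only real difficulty to be this bookkeeping of over- versus under-crossings in the non-symmetric braided case. I would handle it by drawing both sides explicitly for $\sigma'$ and checking isotopy once. The $\rho'$ case reuses that picture with the top of the $H$-strand capped off by $\rho$.
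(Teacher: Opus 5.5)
Your proposal is correct and takes the same approach as the paper, which only asserts the lemma "by direct computation". Naturality and the hexagon axioms reduce everything to \eqref{eq:P-unit} and \eqref{eq:P-der} for $\sigma,\rho$, and the remaining comparison holds for the reason you give: the second copy of $B_1$ slides behind the coupon acting on the first copy of $A$, and the first copy of $B_2$ slides in front of the coupon ($\sigma$ or $\rho$) acting on the second copy of $A$, which is exactly what the over/under choice in $\sigma'$ allows. In a write-up, finish the truncated sentence describing the crossings of your product, and state this step explicitly as naturality of $\tau$ with respect to those coupons rather than as a general appeal to isotopy.
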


\begin{lem}\label{lem:sum-of-tensors}
	For each $1\leq i\leq n$, let $H_i\in\cal{C}$, and $\sigma_i:H_i\otimes A\to A\otimes H_i$, $\rho_i: H_i\otimes A\to A$ satisfy the conditions~(\ref{eq:P-unit},~\ref{eq:P-der}).
	Then $\sigma\coloneqq \sum_i \sigma_i$, $\rho\coloneqq \sum_i \rho_i$ also satisfy~(\ref{eq:P-unit},~\ref{eq:P-der}), and so turn $A\otimes T(\bigoplus_i H_i)$ into an algebra.\qed
\end{lem}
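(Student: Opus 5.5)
The statement to prove is \cref{lem:sum-of-tensors}. The plan is to check the four conditions (\ref{eq:P-unit},~\ref{eq:P-der}) directly for $H\coloneqq\bigoplus_i H_i$. The underlying idea is that each condition is \emph{linear in the $H$-input}. Precisely, a morphism out of $H\otimes A$ or $H\otimes A\otimes A$ is determined by its restrictions along the inclusions $\iota_i: H_i\hookrightarrow H$, by the universal property of the biproduct and additivity of $\otimes$ in each variable. Here I read $\sigma=\sum_i\sigma_i$ as the map
\[
H\otimes A\simeq\bigoplus_i H_i\otimes A\to \bigoplus_i A\otimes H_i\simeq A\otimes H
\]
whose restriction to $H_i\otimes A$ is $(\id_A\otimes\iota_i)\circ\sigma_i$. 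Similarly, $\rho$ restricts to $\rho_i$ on $H_i\otimes A$.

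The key steps, in order:
\begin{enumerate}
\item Unit condition. Precomposing with $\iota_i\otimes\id_A$, the condition $\sigma\circ(\id_H\otimes\eta_A)=\eta_A\otimes\id_H$ restricts on $H_i$ to $(\id_A\otimes\iota_i)\circ\sigma_i\circ(\id_{H_i}\otimes\eta_A)=\eta_A\otimes\iota_i$. This holds by \eqref{eq:P-unit} for $\sigma_i$. The second unit condition for $\rho$ is handled in the same way (or can be skipped, since it follows from the others, as noted after \cref{prop:tens-alg}).
\item Multiplicativity of $\sigma$. Both sides of the first identity in \eqref{eq:P-der} are maps $H\otimes A\otimes A\to A\otimes H$. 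Restricting to $H_i\otimes A\otimes A$, I will use that $\sigma_{12}$ sends $H_i\otimes A\otimes A$ into $A\otimes H_i\otimes A$. Consequently $\sigma_{23}$ acts there via $\sigma_i$ only. Each side then equals $(\id_A\otimes\iota_i)$ composed with the corresponding side for $\sigma_i$.
\item Derivation condition for $\rho$. The same restriction argument applies: $\rho_{23}\circ\sigma_{12}$ restricted to $H_i\otimes A\otimes A$ equals $(\rho_i)_{23}\circ(\sigma_i)_{12}$. The identity then reduces to \eqref{eq:P-der} for $(\sigma_i,\rho_i)$.
\item Conclusion. Apply \cref{prop:tens-alg} to get the algebra structure on $A\otimes T(H)$.
\end{enumerate}

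The main point needing care is steps 2 and 3: one must confirm that no cross terms between different $H_i$ and $H_j$ arise. This holds because each $\sigma_i$ preserves the $H_i$-summand, so the $H$-strand keeps its summand label throughout. The naive reading "$\sigma_i\sigma_j$ with $i\ne j$ appears" is thus ruled out by the block-diagonal form. This is the reason the lemma is phrased as a direct sum rather than as an arbitrary sum of morphisms on a fixed $H$.

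If one prefers a more conceptual route, one could note that condition \eqref{eq:P-der} says that $H\otimes A\to (A\otimes H)\oplus A$ is a morphism compatible with the $A$-actions. Such compatible data are closed under direct sums in $H$, and the statement follows.
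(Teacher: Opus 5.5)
Your proof is correct and is the direct computation the paper leaves implicit: since $\sigma=\sum_i(\id_A\otimes\iota_i)\circ\sigma_i\circ(p_i\otimes\id_A)$ is block-diagonal ($p_j\circ\iota_i=\delta_{ij}$), there are no cross terms, so each of (\ref{eq:P-unit},~\ref{eq:P-der}) for $(\sigma,\rho)$ reduces summand-wise to the same identity for $(\sigma_i,\rho_i)$, and \cref{prop:tens-alg} then gives the algebra structure. Your closing ``conceptual route'' is too loosely stated to be a proof on its own, but the main argument does not depend on it.
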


\begin{expl}\label{ex:symm-braiding}
	Let $F\in \cal{C}$ be an algebra, and set $A = F\otimes F$.
	If the braiding $\tau_{F,F}$ is symmetric, the maps $\sigma = \tau_{H,F^{\otimes 2}}\circ(\id_H\otimes\tau_{F,F})$, $\rho = 0$ satisfy the conditions (\ref{eq:P-unit},~\ref{eq:P-der}).
	Indeed, the only non-trivial condition is proved as follows, where we write $\tau = \tau_{F\otimes F}$, $\mu = \mu_F$:
	\[
	\tikz[thick,xscale=.45,yscale=.45,font=\footnotesize,baseline=(current  bounding  box.center)]{
		\ntxt{-2.2}{1.8}{$\sigma\circ \mu_A=$}
		\draw (0,0) -- (0,1.5);
		\overcros{1}{0}{3}{1.5}
		\draw (4,0) -- (4,1.5);
		\dmerge{0}{1.5}{1}{2.3}
		\dmerge{3}{1.5}{4}{2.3}
		\overcros{0.5}{2.3}{3.5}{3}
		\ntxt{4.6}{1.8}{$=$}
		\draw (0.5,3) -- (0.5,4);
		\draw (3.5,3) -- (3.5,4);
		\draw[line width=3pt, white] (-0.5,2.8) .. controls (-0.5,4*0.6+2.8*0.4) and (4.5,4*0.4+2.8*0.6) .. (4.5,4);
		\draw [dotted,line cap=round,rounded corners] (-0.5,2.8) .. controls (-0.5,4*0.6+2.8*0.4) and (4.5,4*0.4+2.8*0.6) .. (4.5,4);
		\draw [dotted,line cap=round,rounded corners] (-0.5,0) -- (-0.5,2.8);

		\begin{scope}[xshift=6cm]
		\draw (0,0) -- (0,1.5);
		\undercros{1}{0}{3}{1.5}
		\draw (4,0) -- (4,1.5);
		\dmerge{0}{1.5}{1}{2.3}
		\dmerge{3}{1.5}{4}{2.3}
		\overcros{0.5}{2.3}{3.5}{3}
		\ntxt{4.6}{1.8}{$=$}
		\draw (0.5,3) -- (0.5,4);
		\draw (3.5,3) -- (3.5,4);
		\draw[line width=3pt, white] (-0.5,2.8) .. controls (-0.5,4*0.6+2.8*0.4) and (4.5,4*0.4+2.8*0.6) .. (4.5,4);
		\draw [dotted,line cap=round,rounded corners] (-0.5,2.8) .. controls (-0.5,4*0.6+2.8*0.4) and (4.5,4*0.4+2.8*0.6) .. (4.5,4);
		\draw [dotted,line cap=round,rounded corners] (-0.5,0) -- (-0.5,2.8);
		\end{scope}

		\begin{scope}[xshift=12cm]
		\draw (1,0) .. controls (1,1.8*0.6) and (0,1.8*0.4) .. (0,1.8);
		\draw (4,0) .. controls (4,1.8*0.6) and (1,1.8*0.4) .. (1,1.8);
		\draw[line width=3pt, white] (0,0) .. controls (0,1.8*0.6) and (3,1.8*0.4) .. (3,1.8);
		\draw (0,0) .. controls (0,1.8*0.6) and (3,1.8*0.4) .. (3,1.8);
		\draw[line width=3pt, white] (3,0) .. controls (3,1.8*0.6) and (4,1.8*0.4) .. (4,1.8);
		\draw (3,0) .. controls (3,1.8*0.6) and (4,1.8*0.4) .. (4,1.8);
		\dmerge{0}{1.8}{1}{2.7}
		\dmerge{3}{1.8}{4}{2.7}
		\draw (0.5,2.7) -- (0.5,3.5);
		\draw (3.5,2.7) -- (3.5,3.5);
		\ntxt{4.6}{1.8}{$=$}
		\draw (0.5,3) -- (0.5,4);
		\draw (3.5,3) -- (3.5,4);
		\draw[line width=3pt, white] (-0.5,2.8) .. controls (-0.5,4*0.6+2.8*0.4) and (4.5,4*0.4+2.8*0.6) .. (4.5,4);
		\draw [dotted,line cap=round,rounded corners] (-0.5,2.8) .. controls (-0.5,4*0.6+2.8*0.4) and (4.5,4*0.4+2.8*0.6) .. (4.5,4);
		\draw [dotted,line cap=round,rounded corners] (-0.5,0) -- (-0.5,2.8);
		\end{scope}

		\begin{scope}[xshift=18cm]
		\overcros{0}{0}{1}{1}
		\overcros{3}{0}{4}{1}
		\overcros{1}{1.7}{3}{2.7}
		\draw (0,1) -- (0,2.7);
		\draw (1,1) -- (1,1.7);
		\draw (3,1) -- (3,1.7);
		\draw (4,1) -- (4,2.7);
		\dmerge{0}{2.7}{1}{3.7}
		\dmerge{3}{2.7}{4}{3.7}
		\draw (0.5,3.7) -- (0.5,4);
		\draw (3.5,3.7) -- (3.5,4);
		\draw[line width=3pt, white] (-0.5,0.7) .. controls (-0.5,2*0.6+0.7*0.4) and (4.5,2*0.4+0.7*0.6) .. (4.5,2);
		\draw [dotted,line cap=round,rounded corners] (-0.5,0.7) .. controls (-0.5,2*0.6+0.7*0.4) and (4.5,2*0.4+0.7*0.6) .. (4.5,2);
		\draw [dotted,line cap=round,rounded corners] (-0.5,0) -- (-0.5,0.7);
		\draw [dotted,line cap=round,rounded corners] (4.5,2) -- (4.5,4);
		\ntxt{7.5}{1.8}{$=\mu_A\otimes (\sigma\otimes \sigma).$}
		\end{scope}
	}
	\]
	In general, $\tau_{F,F}$ is instead a map of algebras $(F\otimes F,\mu')\to (F\otimes F,\mu)$, where $\mu$, $\mu'$ are as in \cref{ex:construct-algs}(1).
\end{expl}

\subsection{Quantum wreath products}\label{subs:qwp}
Let $H\in \cal{C}$, $n\geq 2$, and write $\Vn \coloneqq \bigoplus_{i=1}^{n-1} H_i$, where each $H_i$ is a copy of $H$.
Pick an algebra $F\in \vec{\cal{C}}$, together with maps $\sigma\in \Hom_{\cal{C}}(H\otimes F^{\otimes 2}, F^{\otimes 2}\otimes H)$, $\rho \in \Hom_{\cal{C}}(H\otimes F^{\otimes 2}, F^{\otimes 2})$ satisfying the conditions (\ref{eq:P-unit},~\ref{eq:P-der}).
Set 
\[
A \coloneqq F^{\otimes n}.
\]
For each $1\leq i\leq n-1$, we define 
\begin{align*}
	\sigma_i & \coloneqq (\id_{F^{\otimes (i+1)}}\otimes \tau^{-1}_{H,F^{\otimes (n-i-1)}})\circ(\id_{F^{\otimes (i-1)}}\otimes \sigma\otimes \id_{F^{\otimes (n-i-1)}})\circ(\tau_{H,F^{\otimes (i-1)}}\otimes \id_{F^{\otimes (n-i+1)}}):H\otimes A\to A\otimes H,\\
	\rho_i & \coloneqq (\id_{F^{\otimes (i-1)}}\otimes \rho\otimes \id_{F^{\otimes (n-i-1)}})\circ(\tau_{H,F^{\otimes (i-1)}}\otimes \id_{F^{\otimes (n-i+1)}}):H\otimes A\to A.
\end{align*}
The following relations follow immediately from the definitions for any $j\geq i+2$:
\begin{equation}\label{eq:sig-rho}
\tikz[thick,xscale=.5,yscale=.5,font=\footnotesize,baseline=(current  bounding  box.center)]{
	\draw [dotted,line cap=round,rounded corners] (0,0) -- (0,1);
	\sigact{1}{0}{2}{1}
	\sigact{0}{1}{1}{2}
	\draw [dotted,line cap=round,rounded corners] (2,1) -- (2,2);
	\overcrosdash{1}{2}{2}{3}
	\draw (0,2) -- (0,3);
	\ntxt{2.5}{1.5}{$=$}
	\ntxt{0}{-0.5}{$j$}
	\ntxt{1}{-0.5}{$i$}
	\ntxt{2}{-0.5}{$A$}

	\begin{scope}[xshift=3cm]
	\draw [dotted,line cap=round,rounded corners] (0,1) -- (0,2);
	\sigact{1}{1}{2}{2}
	\sigact{0}{2}{1}{3}
	\draw [dotted,line cap=round,rounded corners] (2,2) -- (2,3);
	\overcrosdash{0}{0}{1}{1}
	\draw (2,0) -- (2,1);
	\ntxt{2.3}{1.4}{,}
	\ntxt{0}{-0.5}{$j$}
	\ntxt{1}{-0.5}{$i$}
	\ntxt{2}{-0.5}{$A$}
	\end{scope}
}\quad
\tikz[thick,xscale=.5,yscale=.5,font=\footnotesize,baseline=(current  bounding  box.center)]{
	\draw [dotted,line cap=round,rounded corners] (0,0) -- (0,1);
	\sigact{1}{0}{2}{1}
	\rhoact{0}{1}{1}{2}
	\draw [dotted,line cap=round,rounded corners] (2,1) -- (2,3);
	\draw (0.5,2) -- (0.5,3);
	\ntxt{2.5}{1.5}{$=$}
	\ntxt{0}{-0.5}{$j$}
	\ntxt{1}{-0.5}{$i$}
	\ntxt{2}{-0.5}{$A$}

	\begin{scope}[xshift=3cm]
	\draw [dotted,line cap=round,rounded corners] (0,1) -- (0,2);
	\rhoact{1}{1}{2}{2}
	\sigact{0}{2}{1.5}{3}
	\overcrosdash{0}{0}{1}{1}
	\draw (2,0) -- (2,1);
	\ntxt{2.3}{1.4}{,}
	\ntxt{0}{-0.5}{$j$}
	\ntxt{1}{-0.5}{$i$}
	\ntxt{2}{-0.5}{$A$}
	\end{scope}
}\quad
\tikz[thick,xscale=.5,yscale=.5,font=\footnotesize,baseline=(current  bounding  box.center)]{
	\draw [dotted,line cap=round,rounded corners] (0,0) -- (0,1);
	\rhoact{1}{0}{2}{1}
	\sigact{0}{1}{1.5}{2}
	\draw [dotted,line cap=round,rounded corners] (1.5,2) -- (1.5,3);
	\draw (0,2) -- (0,3);
	\ntxt{2.5}{1.5}{$=$}
	\ntxt{0}{-0.5}{$j$}
	\ntxt{1}{-0.5}{$i$}
	\ntxt{2}{-0.5}{$A$}

	\begin{scope}[xshift=3cm]
	\draw [dotted,line cap=round,rounded corners] (0,1) -- (0,2);
	\sigact{1}{1}{2}{2}
	\rhoact{0}{2}{1}{3}
	\draw [dotted,line cap=round,rounded corners] (2,2) -- (2,3);
	\overcrosdash{0}{0}{1}{1}
	\draw (2,0) -- (2,1);
	\ntxt{2.3}{1.4}{,}
	\ntxt{0}{-0.5}{$j$}
	\ntxt{1}{-0.5}{$i$}
	\ntxt{2}{-0.5}{$A$}
	\end{scope}
}\quad
\tikz[thick,xscale=.5,yscale=.5,font=\footnotesize,baseline=(current  bounding  box.center)]{
	\draw [dotted,line cap=round,rounded corners] (0,0) -- (0,1);
	\rhoact{1}{0}{2}{1}
	\rhoact{0}{1}{1.5}{2}
	\draw (0.75,2) -- (0.75,3);
	\ntxt{2.5}{1.5}{$=$}
	\ntxt{0}{-0.5}{$j$}
	\ntxt{1}{-0.5}{$i$}
	\ntxt{2}{-0.5}{$A$}

	\begin{scope}[xshift=3cm]
	\draw [dotted,line cap=round,rounded corners] (0,1) -- (0,2);
	\rhoact{1}{1}{2}{2}
	\rhoact{0}{2}{1.5}{3}
	\overcrosdash{0}{0}{1}{1}
	\draw (2,0) -- (2,1);
	\ntxt{2.3}{1.4}{.}
	\ntxt{0}{-0.5}{$j$}
	\ntxt{1}{-0.5}{$i$}
	\ntxt{2}{-0.5}{$A$}
	\end{scope}
}
\end{equation}
By \cref{lem:extra-lines,lem:sum-of-tensors}, the sums $\sum_i \sigma_i$, $\sum_i \rho_i$ give rise to an algebra structure on $F^{\otimes n}\otimes T(\Vn)$.

We further fix the following datum:
\begin{align*}
	S \in \Hom_{\cal{C}}(H^{\otimes 2},F^{\otimes 2}\otimes H),\qquad
	R\in \Hom_{\cal{C}}(H^{\otimes 2},F^{\otimes 2}).
\end{align*}
For any collection of indices $I\subset \{1,\ldots, n\}$ we write $\iota_I:F^{\otimes |I|}\to F^n$ for the inclusion, given by tensoring identity maps for $i\in I$ and unit maps for $i\not\in I$.
For each $1\leq i\leq n-1$, denote the identity map $H_i\simto H$ by $h_i$, and consider the following ``coloured'' versions of our extra data:
\begin{align*}
	S_i = (\iota_{\{i,i+1\}}\otimes h_i^{-1}) \circ S \circ (h_i\otimes h_i),\qquad
	R_i = \iota_{\{i,i+1\}} \circ R \circ (h_i\otimes h_i).
\end{align*}
We also define $Z_i = (h_i\otimes h_{i+1}\otimes h_i)^{-1} \circ (h_{i+1}\otimes h_i\otimes h_{i+1})$.
Note that this is essentially the identity map $\id_{H^{\otimes 3}}$, except for being coloured in a non-trivial fashion.
Diagrammatically, we draw $H_i$'s as dotted lines, and $A = F^{\otimes n}$ as a solid line. 
We draw the maps above as follows, where $H_i$ is yellow and $H_{i+1}$ blue:
\[
	\tikz[thick,xscale=.42,yscale=.42,font=\footnotesize,baseline=(current  bounding  box.center)]{
		\ntxt{-1}{0.9}{$S_i=$}
		\morSi{0}{0}{2}{2}
		\ntxt{2.5}{0.9}{,}
	}\qquad 
	\tikz[thick,xscale=.42,yscale=.42,font=\footnotesize,baseline=(current  bounding  box.center)]{
		\ntxt{-1}{0.9}{$R_i=$}
		\morRi[OI1]{0}{0}{2}{2}
		\ntxt{2.5}{0.9}{,}
	}\qquad 
	\tikz[thick,xscale=.42,yscale=.42,font=\footnotesize,baseline=(current  bounding  box.center)]{
		\ntxt{-1}{0.9}{$Z_i=$}
		\morTi{0}{0}{2}{2}
		\ntxt{2.5}{0.9}{.}
	}
\]
Let us write $H_{ij}\coloneqq H_i\otimes H_j$ and so on for brevity.
Consider the following collection of maps into $A\otimes T(\Vn)$:
\begin{enumerate}
	\item For all $1\leq i\leq n-1$, $\mathbf{Q}^i = (\id_{H_{ii}},S_i,R_i): H_{ii}\to A\otimes T(\Vn)$;
	\item For all $1\leq i<j-1\leq n-2$, $\mathbf{B}_2^{ij}=(\id_{H_{ji}}, \tau_{H_j,H_i}): H_{ji}\to A\otimes T(\Vn)$; 
	\item For all $1\leq i\leq n-1$, $\mathbf{B}_3^i=(\id_{H_{i+1,i,i+1}},Z_i): H_{i+1,i,i+1}\to H_{i,i+1,i}$.
\end{enumerate}
This collection gives rise to a map $\mathbf{I}_n\to A\otimes T(\Vn)$, where $\mathbf{I}_n$ is the big direct sum  
\[
	\mathbf{I}_n \coloneqq \bigoplus_{i=1}^{n-1} H_{ii} \oplus \bigoplus_{i=1}^{n-3}\bigoplus_{j=i+2}^{n-1} H_{ji} \oplus \bigoplus_{i=1}^{n-1} H_{i+1,i,i+1}.
\]

\begin{defn}\label{def:QWP}
	Let $F\in \cal{C}$ be an algebra.
	Denote $\qbar\coloneqq (\sigma,\rho,S,R)$, and call this quadruple the \textit{deformation datum}.
	The \textit{quantum wreath product} (QWP) associated to $(F,\qbar)$ is the quotient algebra $F\wr\cal{H}_n = F\wr \cal{H}_n(\qbar)\coloneqq (F^{\otimes n}\otimes T(\Vn))/\langle \mathbf{I}_n\rangle$ as above.
\end{defn}

\subsection{Necessary conditions}
For each $w\in \fkS_n$, pick a reduced expression $r(w) = s_{i_1}\ldots s_{i_l}$, such that $r(1)=1$, and for every $w\in \fkS_n\setminus\{1\}$ there exists a unique $s_i$, satisfying the equality of reduced expressions $r(w) = r(ws_i)s_i$.
With this choice in mind, denote
\begin{align*}
	H_w \coloneqq H_{i_1\ldots i_l} = H_{i_1}\otimes\ldots \otimes H_{i_l}.
\end{align*}
\begin{lem}\label{lem:phi-surj}
	The composition $\varphi_n:\bigoplus_{w\in \fkS_n} A\otimes H_w \hookrightarrow A\otimes T(\Vn) \twoheadrightarrow F\wr\cal{H}_n$ is surjective.
\end{lem}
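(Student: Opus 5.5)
Since $F\wr\cal{H}_n$ is a quotient of $A\otimes T(\Vn)=\bigoplus_{k\geq 0}A\otimes \Vn^{\otimes k}$, and $\Vn^{\otimes k}$ splits as the direct sum of $H_{j_1\ldots j_k}$ over all words $(j_1,\ldots,j_k)$ in $\{1,\ldots,n-1\}$, I would show that the image of each summand $A\otimes H_{j_1\ldots j_k}$ lies in the image of $\varphi_n$. Write $M\subset F\wr\cal{H}_n$ for the image of $\varphi_n$. The argument is an induction on the pair $(k, \ell)$, where $\ell$ measures how far the word is from being one of the chosen reduced expressions. It mimics the classical proof that $\{T_w\}$ spans the Hecke algebra, with every relation read up to lower-order terms.

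First I would note that the product in $A\otimes T(\Vn)$ is filtered by tensor length in $H$. Indeed, $\mu_{k,j}$ lands in $\bigoplus_{i\le k}A\otimes H^{\otimes i+j}$, so moving $A$ to the left past $H$'s via $\sigma+\rho$ only produces terms of equal or smaller length. Let $M_{<k}$ be the image of $\bigoplus_{m<k}A\otimes \Vn^{\otimes m}$. Each relation in $\mathbf{I}_n$ says that its leading piece ($\id_{H_{ii}}$, $\id_{H_{ji}}$, or $\id_{H_{i+1,i,i+1}}$) agrees, modulo $A\otimes(\text{shorter tensors})$, with $0$, $\tau_{H_j,H_i}$, or $Z_i$ respectively. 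For $\mathbf{B}_2$ and $\mathbf{B}_3$ there is no lower-order term: $\mathbf{B}_2$ is an honest commutation and $\mathbf{B}_3$ an honest braid move, both up to recolouring. For $\mathbf{Q}^i$, the terms $S_i$ and $R_i$ have length $1$ and $0$. If such a relation is inserted in the middle of a word, $A\otimes H_{u}\otimes H_{ii}\otimes H_{v}$, the lower terms take the form $A\otimes H_u\otimes (F^{\otimes 2}\otimes H_i)\otimes H_v$. Moving the middle $F^{\otimes 2}$ to the far left through $H_u$ with $\sigma,\rho$ gives length at most $k-1$. So modulo $M_{<k}$, the image of $A\otimes H_{j_1\ldots j_k}$ is transported by the braid and commutation moves (via the isomorphisms $\tau$, $Z_i$), and it is killed whenever the word contains a square $ii$.

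Next I would fix $k$ and assume that $M_{<k}\subset M$. Take a word $\mathbf{j}$ of length $k$. If $s_{j_1}\cdots s_{j_k}$ is not reduced, Matsumoto's theorem together with the exchange condition shows that braid and commutation moves turn $\mathbf{j}$ into a word containing $ii$. By the previous step its image then lies in $M_{<k}\subset M$. If $\mathbf{j}$ is reduced for $w$, then braid moves carry it to the chosen expression $r(w)$ by Matsumoto's theorem. Each such move identifies the corresponding summands modulo nothing, since $\mathbf{B}_2$ and $\mathbf{B}_3$ have no lower terms. Hence the image of $A\otimes H_{\mathbf{j}}$ equals the image of $A\otimes H_w$ composed with an isomorphism, and it lies in $M$. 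The base case $k=0$ is the summand $A=A\otimes H_1$, which is trivially in $M$.

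The step I expect to need the most care is the categorical bookkeeping. Here "elements" are replaced by subobjects, namely images of morphisms in the abelian category $\vec{\cal{C}}$. One has to check that quotienting by $\langle\mathbf{I}_n\rangle$, which is a two-sided ideal generated through $\mu\circ(\mu\otimes\id)$, really identifies the images of $A\otimes H_u\otimes X\otimes H_v$ along the two components of each generating map. One also has to check that the left multiplications by $A\otimes H_u$ and by $H_v$ preserve the filtration by length. These points follow from the explicit form \eqref{eq:mu-kj} of $\mu$ and the relations \eqref{eq:sig-rho}. Once they are in place, the Coxeter-theoretic part is standard.
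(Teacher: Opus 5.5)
Your proposal is correct and follows the same route as the paper's proof. The relations $\mathbf{B}_2^{ij},\mathbf{B}_3^i$ identify the images of summands indexed by braid-equivalent words, and the quadratic relation $\mathbf{Q}^i$ pushes summands containing $H_{ii}$ into shorter tensor length. Your induction on length, using Matsumoto's theorem and the exchange condition to produce a factor $H_{ii}$ in any non-reduced word, just makes explicit what the paper's two-line sketch leaves implicit.
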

\begin{proof}
	Using the relations $\mathbf{B}_2^{ij}$, $\mathbf{B}_3^i$, the image of each summand of $A\otimes T(\Vn)$ in $F\wr \mathcal{H}_n$ is isomorphic to the image of a summand of the form $A\otimes H_{i_1\ldots i_l}$, where $s_{i_1}\ldots s_{i_l}$ is a reduced expression in the braid group $\mathfrak{Br}_n$.
	Further using the relation $\mathbf{Q}^{i}$, we can express the images of summands with repeated indices in terms of summands $A\otimes H_w$, $w\in\mathfrak{S}_n$.
\end{proof}

Our goal is to obtain the conditions on the deformation datum $\qbar$, under which the map $\phi_n$ is an isomorphism.
Using the associativity of multiplication, we can rewrite various products in $F\wr\cal{H}_n$ as linear combinations in the image of $\varphi_n$ in several ways, by applying the rules $\mathbf{Q}^i,\mathbf{B}_2^{ij},\mathbf{B}_3^{i}$ in different order.
If the map $\phi_n$ is injective, such rewritings must produce the same result.
We list the conditions obtained in this way below, in diagrammatic terms, and refer the reader to~\cite[App.~B.1]{lai2024quantum} for their derivation, which carries over essentially verbatim.
\begin{equation}\label{diag:TTT}\tag{D3}
	\tikz[thick,xscale=.5,yscale=.5,font=\footnotesize,baseline=(current  bounding  box.center)]{
		\morSi{0}{0}{1}{1}
		\draw [dotted,line cap=round,rounded corners,color=OI5] (2,0) -- (2,1);
		\morRi{1}{1}{2}{2}
		\draw (0,1) -- (0,2);
		\dmerge{0}{2}{1.5}{2.75}
		\draw (0.75,2.75) -- (0.75,3);
		\ntxt{2.55}{1.5}{$=$}

		\begin{scope}[xshift=3.1cm]
		\morRi{1}{0}{2}{1}
		\draw [dotted,line cap=round,rounded corners,color=OI5] (0,0) -- (0,1);
		\rhoact[OI5]{0}{1}{1.5}{2}
		\draw (0.75,2) -- (0.75,3);
		\ntxt{2.3}{1.5}{$+$}
		\end{scope}

		\begin{scope}[xshift=6.2cm]
		\morSi{1}{0}{2}{0.75}
		\draw [dotted,line cap=round,rounded corners,color=OI5] (0,0) -- (0,0.75);
		\sigact[OI5]{0}{0.75}{1}{1.5}
		\draw [dotted,line cap=round,rounded corners,color=OI5] (2,0.75) -- (2,1.5);
		\morRi{1}{1.5}{2}{3*0.75}
		\draw (0,1.5) -- (0,3*0.75);
		\dmerge{0}{3*0.75}{1.5}{3.75*0.75}
		\draw (0.75,3.75*0.75) -- (0.75,3);
		\ntxt{2.5}{1.4}{,}
		\end{scope}
	}\qquad
	\tikz[thick,xscale=.5,yscale=.5,font=\footnotesize,baseline=(current  bounding  box.center)]{
		\morSi{0}{0}{1}{1}
		\draw [dotted,line cap=round,rounded corners,color=OI5] (2,0) -- (2,1);
		\morSi{1}{1}{2}{2}
		\draw (0,1) -- (0,2);
		\dmerge{0}{2}{1}{2.75}
		\draw (0.5,2.75) -- (0.5,3);
		\draw [dotted,line cap=round,rounded corners,color=OI5] (2,2) -- (2,3);
		\ntxt{2.55}{1.5}{$+$}

		\begin{scope}[xshift=3.1cm]
		\morRi{0}{0}{1}{2.5}
		\draw [dotted,line cap=round,rounded corners,color=OI5] (2,0) -- (2,3);
		\draw (0.5,2.5) -- (0.5,3);
		\ntxt{2.55}{1.5}{$=$}
		\end{scope}

		\begin{scope}[xshift=6.2cm]
		\morSi{1}{0}{2}{1}
		\draw [dotted,line cap=round,rounded corners,color=OI5] (0,0) -- (0,1);
		\rhoact[OI5]{0}{1}{1}{2.5}
		\draw (0.5,2.5) -- (0.5,3);
		\draw [dotted,line cap=round,rounded corners,color=OI5] (2,1) -- (2,3);
		\ntxt{2.55}{1.5}{$+$}
		\end{scope}

		\begin{scope}[xshift=9.3cm]
		\morSi{1}{0}{2}{0.75}
		\draw [dotted,line cap=round,rounded corners,color=OI5] (0,0) -- (0,0.75);
		\sigact[OI5]{0}{0.75}{1}{1.5}
		\draw [dotted,line cap=round,rounded corners,color=OI5] (2,0.75) -- (2,1.5);
		\morSi{1}{1.5}{2}{3*0.75}
		\draw (0,1.5) -- (0,3*0.75);
		\dmerge{0}{3*0.75}{1}{3.75*0.75}
		\draw (0.5,3.75*0.75) -- (0.5,3);
		\draw [dotted,line cap=round,rounded corners,color=OI5] (2,2.25) -- (2,3);
		\ntxt{2.55}{1.5}{$+$}
		\end{scope}

		\begin{scope}[xshift=12.4cm]
		\morRi{1}{0}{2}{1.5}
		\draw [dotted,line cap=round,rounded corners,color=OI5] (0,0) -- (0,1.5);
		\sigact[OI5]{0}{1.5}{1.5}{3}
		\ntxt{2.5}{1.4}{,}
		\end{scope}
	}
\end{equation}

\begin{equation}\label{diag:qu1}\tag{D4}
	\tikz[thick,xscale=.5,yscale=.5,font=\footnotesize,baseline=(current  bounding  box.center)]{
		\morSi{0}{0}{1}{1}
		\draw (2,0) -- (2,1);
		\sigact[OI5]{1}{1}{2}{2}
		\draw (0,1) -- (0,2);
		\dmerge{0}{2}{1}{2.75}
		\draw (0.5,2.75) -- (0.5,3);
		\draw [dotted,line cap=round,rounded corners,color=OI5] (2,2) -- (2,3);
		\ntxt{2.55}{1.5}{$=$}

		\begin{scope}[xshift=3.1cm]
		\draw [dotted,line cap=round,rounded corners,color=OI5] (0,0) -- (0,1.5);
		\rhoact[OI5]{1}{0}{2}{1.5}
		\sigact[OI5]{0}{1.5}{1.5}{3}
		
		\ntxt{2.55}{1.5}{$+$}
		\end{scope}

		\begin{scope}[xshift=6.2cm]
		\draw [dotted,line cap=round,rounded corners,color=OI5] (0,0) -- (0,1.5);
		\sigact[OI5]{1}{0}{2}{1.5}
		\rhoact[OI5]{0}{1.5}{1}{3}
		\draw [dotted,line cap=round,rounded corners,color=OI5] (2,1.5) -- (2,3);
		\ntxt{2.55}{1.5}{$+$}
		\end{scope}

		\begin{scope}[xshift=9.3cm]
		\sigact[OI5]{1}{0}{2}{0.75}
		\draw [dotted,line cap=round,rounded corners,color=OI5] (0,0) -- (0,0.75);
		\sigact[OI5]{0}{0.75}{1}{1.5}
		\draw [dotted,line cap=round,rounded corners,color=OI5] (2,0.75) -- (2,1.5);
		\morSi{1}{1.5}{2}{3*0.75}
		\draw (0,1.5) -- (0,3*0.75);
		\dmerge{0}{3*0.75}{1}{3.75*0.75}
		\draw (0.5,3.75*0.75) -- (0.5,3);
		\draw [dotted,line cap=round,rounded corners,color=OI5] (2,2.25) -- (2,3);
		\ntxt{2.3}{1.4}{,}
		\end{scope}
	}\quad 
	\tikz[thick,xscale=.5,yscale=.5,font=\footnotesize,baseline=(current  bounding  box.center)]{
		\morSi{0}{0}{1}{1}
		\draw (2,0) -- (2,1);
		\rhoact[OI5]{1}{1}{2}{2}
		\draw (0,1) -- (0,2);
		\dmerge{0}{2}{1.5}{2.75}
		\draw (0.75,2.75) -- (0.75,3);
		\ntxt{2.75}{1.5}{$+$}

		\begin{scope}[xshift=3.5cm]
		\draw (2,0) -- (2,2);
		\morRi{0}{0}{1}{2}
		\dmerge{0.5}{2}{2}{2.75}
		\draw (1.25,2.75) -- (1.25,3);
		\ntxt{2.75}{1.5}{$=$}
		\end{scope}

		\begin{scope}[xshift=7cm]
		\draw [dotted,line cap=round,rounded corners,color=OI5] (0,0) -- (0,1.5);
		\rhoact[OI5]{1}{0}{2}{1.5}
		\rhoact[OI5]{0}{1.5}{1.5}{3}
		\ntxt{2.75}{1.5}{$+$}
		\end{scope}

		\begin{scope}[xshift=10.5cm]
		\sigact[OI5]{1}{0}{2}{0.75}
		\draw [dotted,line cap=round,rounded corners,color=OI5] (0,0) -- (0,0.75);
		\sigact[OI5]{0}{0.75}{1}{1.5}
		\draw [dotted,line cap=round,rounded corners,color=OI5] (2,0.75) -- (2,1.5);
		\morRi{1}{1.5}{2}{3*0.75}
		\draw (0,1.5) -- (0,3*0.75);
		\dmerge{0}{3*0.75}{1.5}{3.75*0.75}
		\draw (0.75,3.75*0.75) -- (0.75,3);
		\ntxt{2.3}{1.4}{,}
		\end{scope}
	}
\end{equation}

\begin{equation}\label{diag:br1}\tag{D5}
	\tikz[thick,xscale=.42,yscale=.42,font=\footnotesize,baseline=(current  bounding  box.center)]{
		\draw [dotted,line cap=round,rounded corners,color=OI5] (0,0) -- (0,1.5);
		\draw [dotted,line cap=round,rounded corners,color=OI1] (1,0) -- (1,0.75);
		\sigact[OI5]{2}{0}{3}{0.75}
		\sigact[OI1]{1}{0.75}{2}{1.5}
		\draw [dotted,line cap=round,rounded corners,color=OI1] (2,1.5) -- (2,2.25);
		\draw [dotted,line cap=round,rounded corners,color=OI5] (3,0.75) -- (3,2.25);
		\sigact[OI5]{0}{1.5}{1}{2.25}
		\draw (0,2.25) -- (0,4);
		\morTi{1}{2.25}{3}{4}
		\ntxt{3.75}{2}{$=$}

		\begin{scope}[xshift=4.5cm]
		\draw (3,0) -- (3,1.75);
		\morTi{0}{0}{2}{1.75}
		\draw [dotted,line cap=round,rounded corners,color=OI1] (0,1.75) -- (0,3.25);
		\draw [dotted,line cap=round,rounded corners,color=OI5] (1,1.75) -- (1,2.5);
		\sigact[OI1]{2}{1.75}{3}{2.5}
		\sigact[OI5]{1}{2.5}{2}{3.25}
		\draw [dotted,line cap=round,rounded corners,color=OI5] (2,3.25) -- (2,4);
		\draw [dotted,line cap=round,rounded corners,color=OI1] (3,2.5) -- (3,4);
		\sigact[OI1]{0}{3.25}{1}{4}
		\ntxt{3.5}{1.8}{,}
		\end{scope}
	}\qquad
	\tikz[thick,xscale=.42,yscale=.42,font=\footnotesize,baseline=(current  bounding  box.center)]{
		\draw [dotted,line cap=round,rounded corners,color=OI5] (0,0) -- (0,2);
		\draw [dotted,line cap=round,rounded corners,color=OI1] (1,0) -- (1,1);
		\rhoact[OI5]{2}{0}{3}{1}
		\sigact[OI1]{1}{1}{2.5}{2}
		\draw [dotted,line cap=round,rounded corners,color=OI1] (2.5,2) -- (2.5,4);
		\sigact[OI5]{0}{2}{1}{3}
		\draw [dotted,line cap=round,rounded corners,color=OI5] (1,3) -- (1,4);
		\draw (0,3) -- (0,4);
		\ntxt{3.75}{2}{$=$}

		\begin{scope}[xshift=4.5cm]
		\draw (3,0) -- (3,1.75);
		\morTi{0}{0}{2}{1.75}
		\draw [dotted,line cap=round,rounded corners,color=OI1] (0,1.75) -- (0,3.25);
		\draw [dotted,line cap=round,rounded corners,color=OI5] (1,1.75) -- (1,2.5);
		\sigact[OI1]{2}{1.75}{3}{2.5}
		\sigact[OI5]{1}{2.5}{2}{3.25}
		\draw [dotted,line cap=round,rounded corners,color=OI5] (2,3.25) -- (2,4);
		\draw [dotted,line cap=round,rounded corners,color=OI1] (3,2.5) -- (3,4);
		\rhoact[OI1]{0}{3.25}{1}{4}
		\ntxt{3.5}{1.8}{,}
		\end{scope}
	}\qquad
	\tikz[thick,xscale=.42,yscale=.42,font=\footnotesize,baseline=(current  bounding  box.center)]{
		\draw [dotted,line cap=round,rounded corners,color=OI5] (0,0) -- (0,2);
		\draw [dotted,line cap=round,rounded corners,color=OI1] (1,0) -- (1,1);
		\sigact[OI5]{2}{0}{3}{1}
		\sigact[OI1]{1}{1}{2}{2}
		\draw [dotted,line cap=round,rounded corners,color=OI1] (2,2) -- (2,4);
		\draw [dotted,line cap=round,rounded corners,color=OI5] (3,1) -- (3,4);
		\rhoact[OI5]{0}{2}{1}{3}
		\draw (0.5,3) -- (0.5,4);
		\ntxt{3.75}{2}{$=$}

		\begin{scope}[xshift=4.5cm]
		\draw (3,0) -- (3,1.75);
		\morTi{0}{0}{2}{1.75}
		\draw [dotted,line cap=round,rounded corners,color=OI1] (0,1.75) -- (0,3.25);
		\draw [dotted,line cap=round,rounded corners,color=OI5] (1,1.75) -- (1,2.5);
		\rhoact[OI1]{2}{1.75}{3}{2.5}
		\sigact[OI5]{1}{2.5}{2.5}{3.25}
		\draw [dotted,line cap=round,rounded corners,color=OI5] (2.5,3.25) -- (2.5,4);
		\sigact[OI1]{0}{3.25}{1}{4}
		\ntxt{3.5}{1.8}{,}
		\end{scope}
	}
\end{equation}

\begin{equation}\label{diag:br2}\tag{D6}
	\tikz[thick,xscale=.42,yscale=.42,font=\footnotesize,baseline=(current  bounding  box.center)]{
		\draw [dotted,line cap=round,rounded corners,color=OI5] (0,0) -- (0,2);
		\draw [dotted,line cap=round,rounded corners,color=OI1] (1,0) -- (1,1);
		\rhoact[OI5]{2}{0}{3}{1}
		\sigact[OI1]{1}{1}{2.5}{2}
		\draw [dotted,line cap=round,rounded corners,color=OI1] (2.5,2) -- (2.5,4);
		\rhoact[OI5]{0}{2}{1}{3}
		\draw (0.5,3) -- (0.5,4);
		\ntxt{3.55}{2}{$=$}

		\begin{scope}[xshift=4.1cm]
		\draw (3,0) -- (3,1.75);
		\morTi{0}{0}{2}{1.75}
		\draw [dotted,line cap=round,rounded corners,color=OI1] (0,1.75) -- (0,3.25);
		\draw [dotted,line cap=round,rounded corners,color=OI5] (1,1.75) -- (1,2.5);
		\rhoact[OI1]{2}{1.75}{3}{2.5}
		\rhoact[OI5]{1}{2.5}{2.5}{3.25}
		\sigact[OI1]{0}{3.25}{1.75}{4}
		\ntxt{3.55}{2}{$+$}
		\end{scope}

		\begin{scope}[xshift=8.2cm]
		\draw (3,0) -- (3,1.75);
		\morTi{0}{0}{2}{1.75}
		\draw [dotted,line cap=round,rounded corners,color=OI1] (0,1.75) -- (0,3.25);
		\draw [dotted,line cap=round,rounded corners,color=OI5] (1,1.75) -- (1,2.5);
		\sigact[OI1]{2}{1.75}{3}{2.5}
		\rhoact[OI5]{1}{2.5}{2}{3.25}
		\draw [dotted,line cap=round,rounded corners,color=OI1] (3,2.5) -- (3,4);
		\rhoact[OI1]{0}{3.25}{1.5}{4}
		\ntxt{3.55}{2}{$+$}
		\end{scope}

		\begin{scope}[xshift=12.3cm]
		\draw (3,0) -- (3,1.25);
		\morTi{0}{0}{2}{1.25}
		\draw [dotted,line cap=round,rounded corners,color=OI1] (0,1.25) -- (0,2.25);
		\draw [dotted,line cap=round,rounded corners,color=OI5] (1,1.25) -- (1,1.75);
		\sigact[OI1]{2}{1.25}{3}{1.75}
		\rhoact[OI5]{1}{1.75}{2}{2.25}
		\draw [dotted,line cap=round,rounded corners,color=OI1] (3,1.75) -- (3,2.75);
		\sigact[OI1]{0}{2.25}{1.5}{2.75}
		\draw (0,2.75) -- (0,3.5);
		\morSi[OI1]{1.5}{2.75}{3}{3.5}
		\dmerge{0}{3.5}{1.5}{4}
		\draw [dotted,line cap=round,rounded corners,color=OI1] (3,3.5) -- (3,4);
		\ntxt{3.3}{1.8}{,}
		\end{scope}
	}\quad
	\tikz[thick,xscale=.42,yscale=.42,font=\footnotesize,baseline=(current  bounding  box.center)]{
		\draw (3,0) -- (3,1.75);
		\morTi{0}{0}{2}{1.75}
		\draw [dotted,line cap=round,rounded corners,color=OI1] (0,1.75) -- (0,3.25);
		\draw [dotted,line cap=round,rounded corners,color=OI5] (1,1.75) -- (1,2.5);
		\rhoact[OI1]{2}{1.75}{3}{2.5}
		\sigact[OI5]{1}{2.5}{2.5}{3.25}
		\draw [dotted,line cap=round,rounded corners,color=OI5] (2.5,3.25) -- (2.5,4);
		\rhoact[OI1]{0}{3.25}{1}{4}
		\ntxt{3.55}{2}{$=$}

		\begin{scope}[xshift=4.1cm]
		\draw [dotted,line cap=round,rounded corners,color=OI5] (0,0) -- (0,2);
		\draw [dotted,line cap=round,rounded corners,color=OI1] (1,0) -- (1,1);
		\rhoact[OI5]{2}{0}{3}{1}
		\rhoact[OI1]{1}{1}{2.5}{2}
		\sigact[OI5]{0}{2}{1.75}{3}
		\draw (0,3) -- (0,4);
		\draw [dotted,line cap=round,rounded corners,color=OI5] (1.75,3) -- (1.75,4);
		\ntxt{3.55}{2}{$+$}
		\end{scope}

		\begin{scope}[xshift=8.2cm]
		\draw [dotted,line cap=round,rounded corners,color=OI5] (0,0) -- (0,2);
		\draw [dotted,line cap=round,rounded corners,color=OI1] (1,0) -- (1,1);
		\sigact[OI5]{2}{0}{3}{1}
		\rhoact[OI1]{1}{1}{2}{2}
		\draw [dotted,line cap=round,rounded corners,color=OI5] (3,1) -- (3,4);
		\rhoact[OI5]{0}{2}{1.5}{3}
		\draw (0.75,3) -- (0.75,4);
		\ntxt{3.55}{2}{$+$}
		\end{scope}

		\begin{scope}[xshift=12.3cm]
		\draw [dotted,line cap=round,rounded corners,color=OI5] (0,0) -- (0,1.4);
		\draw [dotted,line cap=round,rounded corners,color=OI1] (1,0) -- (1,0.7);
		\sigact[OI5]{2}{0}{3}{0.7}
		\rhoact[OI1]{1}{0.7}{2}{1.4}
		\draw [dotted,line cap=round,rounded corners,color=OI5] (3,0.7) -- (3,2.1);
		\sigact[OI5]{0}{1.4}{1.5}{2.1}
		\draw (0,2.1) -- (0,3.5);
		\morSi{1.5}{2.1}{3}{3.5}
		\draw [dotted,line cap=round,rounded corners,color=OI5] (3,3.5) -- (3,4);
		\dmerge{0}{3.5}{1.5}{4}
		\ntxt{3.3}{1.8}{,}
		\end{scope}
	}
\end{equation}

\begin{equation}\label{diag:br3}\tag{D7}
	\tikz[thick,xscale=.42,yscale=.42,font=\footnotesize,baseline=(current  bounding  box.center)]{
		\draw (3,0) -- (3,1.75);
		\morTi{0}{0}{2}{1.75}
		\draw [dotted,line cap=round,rounded corners,color=OI1] (0,1.75) -- (0,3.25);
		\draw [dotted,line cap=round,rounded corners,color=OI5] (1,1.75) -- (1,2.5);
		\rhoact[OI1]{2}{1.75}{3}{2.5}
		\rhoact[OI5]{1}{2.5}{2.5}{3.25}
		\rhoact[OI1]{0}{3.25}{1.75}{4}
		\ntxt{3.75}{2}{$+$}

		\begin{scope}[xshift=4.5cm]
		\draw (3,0) -- (3,1.25);
		\morTi{0}{0}{2}{1.25}
		\draw [dotted,line cap=round,rounded corners,color=OI1] (0,1.25) -- (0,2.25);
		\draw [dotted,line cap=round,rounded corners,color=OI5] (1,1.25) -- (1,1.75);
		\sigact[OI1]{2}{1.25}{3}{1.75}
		\rhoact[OI5]{1}{1.75}{2}{2.25}
		\draw [dotted,line cap=round,rounded corners,color=OI1] (3,1.75) -- (3,2.75);
		\sigact[OI1]{0}{2.25}{1.5}{2.75}
		\draw (0,2.75) -- (0,3.5);
		\morRi[OI1]{1.5}{2.75}{3}{3.5}
		\dmerge{0}{3.5}{2.25}{4}
		\ntxt{3.75}{2}{$=$}
		\end{scope}

		\begin{scope}[xshift=9cm]
		\draw [dotted,line cap=round,rounded corners,color=OI5] (0,0) -- (0,2);
		\draw [dotted,line cap=round,rounded corners,color=OI1] (1,0) -- (1,1);
		\rhoact[OI5]{2}{0}{3}{1}
		\rhoact[OI1]{1}{1}{2.5}{2}
		\rhoact[OI5]{0}{2}{1.75}{3}
		\draw (0.875,3) -- (0.875,4);
		\ntxt{3.75}{2}{$+$}
		\end{scope}

		\begin{scope}[xshift=13.5cm]
		\draw [dotted,line cap=round,rounded corners,color=OI5] (0,0) -- (0,1.4);
		\draw [dotted,line cap=round,rounded corners,color=OI1] (1,0) -- (1,0.7);
		\sigact[OI5]{2}{0}{3}{0.7}
		\rhoact[OI1]{1}{0.7}{2}{1.4}
		\draw [dotted,line cap=round,rounded corners,color=OI5] (3,0.7) -- (3,2.1);
		\sigact[OI5]{0}{1.4}{1.5}{2.1}
		\draw (0,2.1) -- (0,3.5);
		\morRi{1.5}{2.1}{3}{3.5}
		\dmerge{0}{3.5}{2.25}{4}
		\ntxt{3.5}{1.8}{,}
		\end{scope}
	}
\end{equation}

\begin{equation}
\label{diag:br4}\tag{D8}
\begin{gathered}
	\tikz[thick,xscale=.42,yscale=.42,font=\footnotesize,baseline=(current  bounding  box.center)]{
		\morSi[OI5]{0}{0}{1}{2}
		\draw [dotted,line cap=round,rounded corners,color=OI1] (2,0) -- (2,2);
		\draw [dotted,line cap=round,rounded corners,color=OI5] (3,0) -- (3,2);
		\morTi{1}{2}{3}{4}
		\draw (0,2) -- (0,4);
		\ntxt{3.55}{2}{$=$}

		\begin{scope}[xshift=4.1cm]
		\draw [dotted,line cap=round,rounded corners,color=OI5] (0,0) -- (0,1.25);
		\morTi{1}{0}{3}{1.25}
		\morTi{0}{1.25}{2}{2.5}
		\draw [dotted,line cap=round,rounded corners,color=OI1] (3,1.25) -- (3,2.5);
		\draw [dotted,line cap=round,rounded corners,color=OI1] (0,2.5) -- (0,3.5);
		\draw [dotted,line cap=round,rounded corners,color=OI5] (1,2.5) -- (1,3);
		\draw [dotted,line cap=round,rounded corners,color=OI5] (2,3.5) -- (2,4);
		\morSi[OI1]{2}{2.5}{3}{3}
		\sigact[OI5]{1}{3}{2}{3.5}
		\sigact[OI1]{0}{3.5}{1}{4}
		\draw [dotted,line cap=round,rounded corners,color=OI1] (3,3) -- (3,4);
		\ntxt{3.3}{1.8}{,}
		\end{scope}
	}\enspace
	\tikz[thick,xscale=.42,yscale=.42,font=\footnotesize,baseline=(current  bounding  box.center)]{
		\draw [dotted,line cap=round,rounded corners,color=OI5] (3,0) -- (3,1.5);
		\morTi{0}{0}{2}{1.5}
		\morTi{1}{1.5}{3}{3}
		\draw [dotted,line cap=round,rounded corners,color=OI1] (0,1.5) -- (0,3);
		\morSi[OI1]{0}{3}{1}{4}
		\draw [dotted,line cap=round,rounded corners,color=OI5] (2,3) -- (2,4);
		\draw [dotted,line cap=round,rounded corners,color=OI1] (3,3) -- (3,4);
		\ntxt{3.55}{2}{$=$}

		\begin{scope}[xshift=4.1cm]
		\draw [dotted,line cap=round,rounded corners,color=OI5] (0,0) -- (0,1.75);
		\draw [dotted,line cap=round,rounded corners,color=OI1] (1,0) -- (1,1);
		\morSi[OI5]{2}{0}{3}{1}
		\sigact[OI1]{1}{1}{2}{1.75}
		\draw [dotted,line cap=round,rounded corners,color=OI1] (2,1.75) -- (2,2.5);
		\sigact[OI5]{0}{1.75}{1}{2.5}
		\draw [dotted,line cap=round,rounded corners,color=OI5] (3,1) -- (3,2.5);
		\draw (0,2.5) -- (0,4);
		\morTi{1}{2.5}{3}{4}
		\ntxt{3.3}{1.8}{,}
		\end{scope}
	}\enspace
	\tikz[thick,xscale=.42,yscale=.42,font=\footnotesize,baseline=(current  bounding  box.center)]{
		\draw [dotted,line cap=round,rounded corners,color=OI5] (3,0) -- (3,1.5);
		\morTi{0}{0}{2}{1.5}
		\morTi{1}{1.5}{3}{3}
		\draw [dotted,line cap=round,rounded corners,color=OI1] (0,1.5) -- (0,3);
		\morRi[OI1]{0}{3}{1}{4}
		\draw [dotted,line cap=round,rounded corners,color=OI5] (2,3) -- (2,4);
		\draw [dotted,line cap=round,rounded corners,color=OI1] (3,3) -- (3,4);
		\ntxt{3.55}{2}{$=$}

		\begin{scope}[xshift=4.1cm]
		\draw [dotted,line cap=round,rounded corners,color=OI5] (0,0) -- (0,2);
		\draw [dotted,line cap=round,rounded corners,color=OI1] (1,0) -- (1,1);
		\morRi[OI5]{2}{0}{3}{1}
		\sigact[OI1]{1}{1}{2.5}{2}
		\draw [dotted,line cap=round,rounded corners,color=OI1] (2.5,2) -- (2.5,4);
		\sigact[OI5]{0}{2}{1}{3}
		\draw (0,3) -- (0,4);
		\draw [dotted,line cap=round,rounded corners,color=OI5] (1,3) -- (1,4);
		\ntxt{3.3}{1.8}{,}
		\end{scope}
	}\enspace
	\tikz[thick,xscale=.42,yscale=.42,font=\footnotesize,baseline=(current  bounding  box.center)]{
		\morRi[OI5]{0}{0}{1}{3}
		\draw [dotted,line cap=round,rounded corners,color=OI1] (2,0) -- (2,4);
		\draw [dotted,line cap=round,rounded corners,color=OI5] (3,0) -- (3,4);
		\draw (0.5,3) -- (0.5,4);
		\ntxt{3.55}{2}{$=$}

		\begin{scope}[xshift=4.1cm]
		\draw [dotted,line cap=round,rounded corners,color=OI5] (0,0) -- (0,1.25);
		\morTi{1}{0}{3}{1.25}
		\morTi{0}{1.25}{2}{2.5}
		\draw [dotted,line cap=round,rounded corners,color=OI1] (3,1.25) -- (3,2.5);
		\draw [dotted,line cap=round,rounded corners,color=OI1] (0,2.5) -- (0,3.5);
		\draw [dotted,line cap=round,rounded corners,color=OI5] (1,2.5) -- (1,3);
		\draw [dotted,line cap=round,rounded corners,color=OI5] (2.5,3.5) -- (2.5,4);
		\morRi[OI1]{2}{2.5}{3}{3}
		\sigact[OI5]{1}{3}{2.5}{3.5}
		\sigact[OI1]{0}{3.5}{1}{4}
		\ntxt{3.3}{1.8}{,}
		\end{scope}
	}
\\
	\tikz[thick,xscale=.42,yscale=.42,font=\footnotesize,baseline=(current  bounding  box.center)]{
		\draw [dotted,line cap=round,rounded corners,color=OI5] (0,0) -- (0,1.25);
		\morTi{1}{0}{3}{1.25}
		\morTi{0}{1.25}{2}{2.5}
		\draw [dotted,line cap=round,rounded corners,color=OI1] (3,1.25) -- (3,2.5);
		\draw [dotted,line cap=round,rounded corners,color=OI1] (0,2.5) -- (0,3.5);
		\draw [dotted,line cap=round,rounded corners,color=OI5] (1,2.5) -- (1,3);
		\draw [dotted,line cap=round,rounded corners,color=OI5] (2,3.5) -- (2,4);
		\morSi[OI1]{2}{2.5}{3}{3}
		\sigact[OI5]{1}{3}{2}{3.5}
		\rhoact[OI1]{0}{3.5}{1}{4}
		\draw [dotted,line cap=round,rounded corners,color=OI1] (3,3) -- (3,4);
		\ntxt{3.75}{2}{$=$}

		\begin{scope}[xshift=4.5cm]
		\morSi[OI5]{2}{0}{3}{1}
		\draw [dotted,line cap=round,rounded corners,color=OI5] (0,0) -- (0,2);
		\draw [dotted,line cap=round,rounded corners,color=OI1] (1,0) -- (1,1);
		\sigact[OI1]{1}{1}{2}{2}
		\rhoact[OI5]{0}{2}{1}{3}
		\draw (0.5,3) -- (0.5,4);
		\draw [dotted,line cap=round,rounded corners,color=OI1] (2,2) -- (2,4);
		\draw [dotted,line cap=round,rounded corners,color=OI5] (3,1) -- (3,4);
		\ntxt{3.75}{2}{$=$}
		\end{scope}

		\begin{scope}[xshift=9cm]
		\draw [dotted,line cap=round,rounded corners,color=OI5] (0,0) -- (0,3);
		\draw [dotted,line cap=round,rounded corners,color=OI1] (1,0) -- (1,2);
		\morRi[OI5]{2}{0}{3}{2}
		\sigact[OI1]{1}{2}{2.5}{3}
		\draw [dotted,line cap=round,rounded corners,color=OI1] (2.5,3) -- (2.5,4);
		\rhoact[OI5]{0}{3}{1}{4}
		\ntxt{3.75}{2}{$=$}
		\end{scope}

		\begin{scope}[xshift=13.5cm]
		\draw [dotted,line cap=round,rounded corners,color=OI5] (0,0) -- (0,1.2);
		\morTi{1}{0}{3}{1.2}
		\draw [dotted,line cap=round,rounded corners,color=OI1] (3,1.2) -- (3,2.4);
		\morTi{0}{1.2}{2}{2.4}
		\morRi[OI1]{2}{2.4}{3}{3}
		\draw [dotted,line cap=round,rounded corners,color=OI1] (0,2.4) -- (0,3.5);
		\draw [dotted,line cap=round,rounded corners,color=OI1] (1,2.4) -- (1,3);
		\sigact[OI5]{1}{3}{2.5}{3.5}
		\draw [dotted,line cap=round,rounded corners,color=OI5] (2.5,3.5) -- (2.5,4);
		\rhoact[OI1]{0}{3.5}{1}{4}
		\ntxt{3.75}{2}{$=0,$}
		\end{scope}
	}
\end{gathered}
\end{equation}

\begin{equation}
\label{diag:br5}\tag{D9}
\begin{gathered}
	\tikz[thick,xscale=.42,yscale=.42,font=\footnotesize,baseline=(current  bounding  box.center)]{
		\draw [dotted,line cap=round,rounded corners,color=OI5] (0,0) -- (0,1.2);
		\morTi{1}{0}{3}{1.2}
		\morTi{0}{1.2}{2}{2.4}
		\draw [dotted,line cap=round,rounded corners,color=OI1] (3,1.2) -- (3,2.4);
		\draw [dotted,line cap=round,rounded corners,color=OI1] (0,2.4) -- (0,3.5);
		\draw [dotted,line cap=round,rounded corners,color=OI5] (1,2.4) -- (1,3);
		\morSi[OI1]{2}{2.4}{3}{3}
		\rhoact[OI5]{1}{3}{2}{3.5}
		\rhoact[OI1]{0}{3.5}{1.5}{4}
		\draw [dotted,line cap=round,rounded corners,color=OI1] (3,3) -- (3,4);
		\ntxt{3.75}{2}{$+$}

		\begin{scope}[xshift=4.5cm]
		\draw [dotted,line cap=round,rounded corners,color=OI5] (0,0) -- (0,1);
		\morTi{1}{0}{3}{1}
		\morTi{0}{1}{2}{2}
		\draw [dotted,line cap=round,rounded corners,color=OI1] (3,1) -- (3,2);
		\draw [dotted,line cap=round,rounded corners,color=OI1] (0,2) -- (0,2.8);
		\draw [dotted,line cap=round,rounded corners,color=OI5] (1,2) -- (1,2.4);
		\morSi[OI1]{2}{2}{3}{2.4}
		\rhoact[OI5]{1}{2.4}{2}{2.8}
		\sigact[OI1]{0}{2.8}{1.5}{3.2}
		\draw [dotted,line cap=round,rounded corners,color=OI1] (3,2.4) -- (3,3.2);
		\draw (0,3.2) -- (0,3.6);
		\morSi[OI1]{1.5}{3.2}{3}{3.6}
		\dmerge{0}{3.6}{1.5}{4}
		\draw [dotted,line cap=round,rounded corners,color=OI1] (3,3.6) -- (3,4);
		\ntxt{3.75}{2}{$+$}
		\end{scope}

		\begin{scope}[xshift=9cm]
		\draw [dotted,line cap=round,rounded corners,color=OI5] (0,0) -- (0,1.2);
		\morTi{1}{0}{3}{1.2}
		\morTi{0}{1.2}{2}{2.4}
		\draw [dotted,line cap=round,rounded corners,color=OI1] (3,1.2) -- (3,2.4);
		\draw [dotted,line cap=round,rounded corners,color=OI1] (0,2.4) -- (0,3.5);
		\draw [dotted,line cap=round,rounded corners,color=OI5] (1,2.4) -- (1,3);
		\morRi[OI1]{2}{2.4}{3}{3}
		\rhoact[OI5]{1}{3}{2.5}{3.5}
		\sigact[OI1]{0}{3.5}{1.75}{4}
		\ntxt{3.75}{2}{$=$}
		\end{scope}

		\begin{scope}[xshift=13.5cm]
		\draw [dotted,line cap=round,rounded corners,color=OI5] (0,0) -- (0,3);
		\draw [dotted,line cap=round,rounded corners,color=OI1] (1,0) -- (1,2);
		\morSi[OI5]{2}{0}{3}{2}
		\draw [dotted,line cap=round,rounded corners,color=OI5] (3,2) -- (3,4);
		\rhoact[OI1]{1}{2}{2}{3}
		\rhoact[OI5]{0}{3}{1.5}{4}
		\ntxt{3.75}{2}{$+$}
		\end{scope}

		\begin{scope}[xshift=18cm]
		\draw [dotted,line cap=round,rounded corners,color=OI5] (0,0) -- (0,1.6);
		\draw [dotted,line cap=round,rounded corners,color=OI1] (1,0) -- (1,0.8);
		\morSi[OI5]{2}{0}{3}{0.8}
		\draw [dotted,line cap=round,rounded corners,color=OI5] (3,0.8) -- (3,2.4);
		\rhoact[OI1]{1}{0.8}{2}{1.6}
		\sigact[OI5]{0}{1.6}{1.5}{2.4}
		\draw (0,2.4) -- (0,3.2);
		\morSi[OI5]{1.5}{2.4}{3}{3.2}
		\dmerge{0}{3.2}{1.5}{3.7}
		\draw (0.75,3.7) -- (0.75,4);
		\draw [dotted,line cap=round,rounded corners,color=OI5] (3,3.2) -- (3,4);
		\ntxt{3.75}{2}{$+$}
		\end{scope}

		\begin{scope}[xshift=22.5cm]
		\draw [dotted,line cap=round,rounded corners,color=OI5] (0,0) -- (0,3);
		\draw [dotted,line cap=round,rounded corners,color=OI1] (1,0) -- (1,2);
		\morRi[OI5]{2}{0}{3}{2}
		\rhoact[OI1]{1}{2}{2.5}{3}
		\sigact[OI5]{0}{3}{1.75}{4}
		\ntxt{3.75}{2}{$=0,$}
		\end{scope}
	}
\\
	\tikz[thick,xscale=.42,yscale=.42,font=\footnotesize,baseline=(current  bounding  box.center)]{
		\draw [dotted,line cap=round,rounded corners,color=OI5] (0,0) -- (0,1.2);
		\morTi{1}{0}{3}{1.2}
		\morTi{0}{1.2}{2}{2.4}
		\draw [dotted,line cap=round,rounded corners,color=OI1] (3,1.2) -- (3,2.4);
		\draw [dotted,line cap=round,rounded corners,color=OI1] (0,2.4) -- (0,3.5);
		\draw [dotted,line cap=round,rounded corners,color=OI5] (1,2.4) -- (1,3);
		\morRi[OI1]{2}{2.4}{3}{3}
		\rhoact[OI5]{1}{3}{2.5}{3.5}
		\rhoact[OI1]{0}{3.5}{1.75}{4}
		\ntxt{3.75}{2}{$+$}

		\begin{scope}[xshift=4.5cm]
		\draw [dotted,line cap=round,rounded corners,color=OI5] (0,0) -- (0,1);
		\morTi{1}{0}{3}{1}
		\morTi{0}{1}{2}{2}
		\draw [dotted,line cap=round,rounded corners,color=OI1] (3,1) -- (3,2);
		\draw [dotted,line cap=round,rounded corners,color=OI1] (0,2) -- (0,2.8);
		\draw [dotted,line cap=round,rounded corners,color=OI5] (1,2) -- (1,2.4);
		\morSi[OI1]{2}{2}{3}{2.4}
		\rhoact[OI5]{1}{2.4}{2}{2.8}
		\sigact[OI1]{0}{2.8}{1.5}{3.2}
		\draw [dotted,line cap=round,rounded corners,color=OI1] (3,2.4) -- (3,3.2);
		\draw (0,3.2) -- (0,3.6);
		\morRi[OI1]{1.5}{3.2}{3}{3.6}
		\dmerge{0}{3.6}{2.25}{4}
		\ntxt{3.75}{2}{$=$}
		\end{scope}

		\begin{scope}[xshift=9cm]
		\draw [dotted,line cap=round,rounded corners,color=OI5] (0,0) -- (0,2);
		\draw [dotted,line cap=round,rounded corners,color=OI1] (1,0) -- (1,1);
		\morRi[OI5]{2}{0}{3}{1}
		\rhoact[OI1]{1}{1}{2.5}{2}
		\rhoact[OI5]{0}{2}{1.75}{4}
		\ntxt{3.75}{2}{$+$}
		\end{scope}

		\begin{scope}[xshift=13.5cm]
		\draw [dotted,line cap=round,rounded corners,color=OI5] (0,0) -- (0,1.5);
		\draw [dotted,line cap=round,rounded corners,color=OI1] (1,0) -- (1,1);
		\morSi[OI5]{2}{0}{3}{1}
		\rhoact[OI1]{1}{1}{2}{1.5}
		\sigact[OI5]{0}{1.5}{1.5}{2}
		\draw [dotted,line cap=round,rounded corners,color=OI5] (3,1) -- (3,2);
		\draw (0,2) -- (0,3);
		\morRi[OI5]{1.5}{2}{3}{3}
		\dmerge{0}{3}{2.25}{3.7}
		\draw (1.125,3.7) -- (1.125,4);
		\ntxt{3.75}{2}{$=0.$}
		\end{scope}
	}
\end{gathered}
\end{equation}
For completeness, here is the list of rewritings which give rise to these relations:
\begin{align*}
	\eqref{diag:TTT}&: \quad H_{ii}\otimes A = H_i\otimes (H_i\otimes A),\\ 
	\eqref{diag:qu1}&: \quad H_{ii}H_i = H_{iii} = H_iH_{ii},\\
	\eqref{diag:br1},\eqref{diag:br2},\eqref{diag:br3}&: \quad H_2\otimes (H_1\otimes (H_2\otimes A)) = H_{212}\otimes A \simeq H_{121} \otimes A = H_1\otimes (H_2\otimes (H_1\otimes A)),\\
	\eqref{diag:br4},\eqref{diag:br5}&: \quad H_{21}H_{22} = H_{2122} \simeq H_{1212} \simeq H_{1121} = H_{11}H_{21}, \quad H_{22}H_{12} \simeq H_{12}H_{11}.
\end{align*}

\subsection{Basis theorem}
Let us denote $\mathbb{V} = \bigoplus_{w\in \fkS_n} A\otimes H_w$.
We say that $F\wr \cal{H}_n$ \textit{has a PBW basis} if the map $\phi_n : \mathbb{V}\to F\wr \cal{H}_n$ is an isomorphism.
The following is the main result of~\cite{lai2024quantum}.

\begin{thm}\label{thm:QWP-basis}
	The algebra $F\wr \cal{H}_n$ has a PBW basis if and only if the conditions (\ref{diag:wr}--\ref{diag:br5}) hold.
\end{thm}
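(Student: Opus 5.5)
Necessity has essentially been established already: conditions (D1)--(D2) are forced by \cref{prop:tens-alg}, since $F^{\otimes n}$ must embed into $F\wr\cal{H}_n$ with the prescribed commutation rule. The remaining conditions (D3)--(D9) arise because, when $\varphi_n$ is injective, the rewritings listed after (D9) express the same element of $F\wr\cal{H}_n$ in two ways in the image of $\mathbb{V}$, and the two expressions must coincide. So I will concentrate on sufficiency. The plan is a diamond-lemma (Bergman) argument, carried out in $\vec{\cal{C}}$ rather than with elements, exactly as in \cite{lai2024quantum}.

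The first step is to construct $\mathbb{V}$ as a left $A\otimes T(\Vn)$-module. By \cref{lem:phi-surj}, it suffices to show that $\mathbb{V}$ carries an algebra structure for which $\varphi_n$ is an algebra map. Equivalently, I will define a left action of the generators $A$ and $H_i$ on $\mathbb{V}$ and check that the relations $\mathbf{I}_n$ act by zero.
\begin{itemize}
	\item The object $A$ acts on $\mathbb{V}$ by $\mu_A$ on the first factor.
	\item For $H_i\otimes A\otimes H_w$, I first apply $\sigma_i+\rho_i$ to move $H_i$ past $A$, landing in $A\otimes H_i\otimes H_w\oplus A\otimes H_w$.
	\item If $\ell(s_iw)>\ell(w)$, I identify $H_i\otimes H_w$ with $H_{s_iw}$ via the composite of the $Z$'s and braidings $\tau$ given by the braid moves relating $s_i r(w)$ to $r(s_iw)$ (Matsumoto's theorem).
	\item Otherwise, I rewrite $r(w)$ to begin with $s_i$, apply $\mathbf{Q}^i$ (using $S_i,R_i$), and then recursively straighten the resulting terms; the recursion is on length.
\end{itemize}

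The second step is to show that the action is well defined and satisfies the relations. Independence of the choice of braid-move sequence is where (D5)--(D7) enter, together with \eqref{eq:sig-rho} and naturality of $\tau$. Indeed, $Z_i$ has to commute appropriately with $\sigma,\rho$ pushed through $A$, and the Zamolodchikov-type coherence among braid moves reduces (by Matsumoto/Tits) to the rank-2 and rank-3 loops, where the $Z$'s are just colored identities.

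The main check is that $H_iH_i$ acts as $S_iH_i+R_i$, and that the braid relations hold as operators. By the diamond lemma, this reduces to resolving the overlap ambiguities listed after (D9):
\begin{itemize}
	\item $H_{ii}\otimes A$, resolved by (D3);
	\item $H_{iii}$, resolved by (D4);
	\item $H_{212}\otimes A$, resolved by (D5)--(D7);
	\item $H_{2122}$, $H_{1121}$, $H_{22}H_{12}$, resolved by (D8)--(D9).
\end{itemize}
Ambiguities involving commuting indices $|i-j|>1$ are resolved automatically by \eqref{eq:sig-rho} and by \cref{lem:extra-lines}, which guarantees that the $\sigma_i,\rho_i$ for distant $i$ act on disjoint tensor factors.

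Once the action is defined, applying it to $1\in A\otimes H_{e}$ gives a map $F\wr\cal{H}_n\to\mathbb{V}$ that is left inverse to $\varphi_n$, so $\varphi_n$ is injective.

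I expect the main obstacle to be the overlap $H_{2122}$ ($\simeq H_{1212}\simeq H_{1121}$). Here the quadratic and cubic relations interact: the reduction tree branches through both $\mathbf{Q}$ and $\mathbf{B}_3$, and the terms produced by $S$ and $R$ must be pushed through the other color using $\sigma,\rho$. Matching terms by $H$-degree, I will separate the comparison into components landing in $A\otimes H^{\otimes k}$ for $k=3,2,1,0$; each degree corresponds to exactly one of the stated diagrams in (D8)--(D9). The bookkeeping of the categorical braidings $\tau_{H,F}$ is harmless by naturality. In the ind-category, the word ``basis'' is replaced by the isomorphism statement throughout, so no elementwise arguments are needed beyond those expressible as equalities of morphisms.
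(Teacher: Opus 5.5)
Your global strategy is the paper's strategy, mirrored. The paper (following \cite{lai2024quantum}) makes $\mathbb{V}$ a \emph{right} module via $f$ and $T_i$; you make it a left module. In both cases evaluation at $1$ inverts $\varphi_n$.

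The gap is the step you call the main check, where you assert that ``by the diamond lemma'' verifying the quadratic and braid relations on all of $\mathbb{V}$ reduces to the overlaps listed after (D9). The diamond lemma concerns a reduction system, not a module you have already written down. Suppose you instead run Bergman's lemma on the obvious system $H_{i+1}H_iH_{i+1}\to H_iH_{i+1}H_i$, $H_jH_i\to H_iH_j$ ($j\geq i+2$), $H_i^2\to S_iH_i+R_i$, $H_ia\to\sigma_i(a)H_i+\rho_i(a)$. Then the claimed reduction fails for $n\geq 4$. The overlap $H_3H_2H_3H_1$ is one you would count as ``automatically resolved'', since it involves the commuting pair $1,3$. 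Yet it reduces to the two distinct irreducible words $H_2H_3H_2H_1$ and $H_3H_2H_1H_3$. Completing the system produces new rules and four-strand overlaps that are not among (D3)--(D9). Sidestepping them is exactly the ``minor miracle'' of \cref{rmk:Zamolod}. The paper achieves this by an induction on length (the grand loop over $W[\ell],M[\ell],Q[\ell],B_2[\ell],B_3[\ell],R[\ell]$), not by enumerating overlaps.

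Your left-module setup can be completed, but you have to supply the propagation argument yourself. Write $T_i$ for the action of $H_i$ and $l_a$ for left multiplication.

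\emph{Definition of $T_i$.} With $A$ on the left, $T_i$ needs no recursion: for $s_iw<w$ one simply has $H_iH_w=S_iH_w+R_iH_{s_iw}$. Its definition involves only the $\tau$'s and $Z$'s, so well-definedness is pure coherence. Contrary to what you write, (D5)--(D7) play no role there. They do in the paper's $R[\ell]$, because there $A$ is pushed leftwards through $H_w$.

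\emph{Wreath and quadratic relations.} The wreath relation is (D2). The quadratic relation on $aH_w$ is a local computation with (D3)--(D4).

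\emph{Braid relation} (written for invertible $H$). Factor $w=uw'$ with $u\in\langle s_i,s_{i+1}\rangle$ and $w'$ a minimal coset representative. This reduces the check to $\mathbb{V}_I=\bigoplus_{u\in\langle s_i,s_{i+1}\rangle}A\otimes H_u$. Set $D=T_{i+1}T_iT_{i+1}-T_iT_{i+1}T_i$. Given the wreath and quadratic relations, (D5)--(D7) yield $D\,l_a=l_{\sigma_{i+1}\sigma_i\sigma_{i+1}(a)}D$. Likewise (D8)--(D9) yield $D\,T_i=(l_{S_{i+1}}-T_{i+1})D$ and $D\,T_{i+1}=(l_{S_i}-T_i)D$. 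Since $D(1)=0$ and $\mathbb{V}_I$ is generated from $1$ by the $l_a$, $T_i$, $T_{i+1}$, this forces $D=0$.

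An argument of this kind, or the paper's induction, is the missing idea. The list of overlaps by itself does not close the proof.
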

\begin{proof}[Sketch of proof]
	This theorem is proved in~\cite{lai2024quantum} for $\cal{C} = \Vec$, $H = \bbk$.
	The necessity of conditions (\ref{diag:wr}--\ref{diag:br5}) is explained above.
	For the sufficiency, the strategy in \textit{loc.~cit.} is as follows: 
	\begin{enumerate}[leftmargin=20pt,label={\arabic*.}]
		\item Let $\mathbb{V}^\ell = \bigoplus_{\ell(w)\leq \ell} A\otimes H_w\subset \mathbb{V}$. Define maps of left $A$-modules $f: \mathbb{V}\otimes A\to \mathbb{V}$, $T_i: \mathbb{V}\otimes H_i\to \mathbb{V}$ by induction on the length $\ell$;
		\item Check that these maps satisfy the defining relations of $F\wr \cal{H}_n$, and hence define a right $F\wr \cal{H}_n$-module structure on $\mathbb{V}$. By definition, this module is cyclic;
		\item Verify that the map $\phi_n$ is compatible with the right $F\wr \cal{H}_n$-action;
		\item As $\phi_n$ surjects from a cyclic module onto the regular one, it must be an isomorphism.
	\end{enumerate}
	This proof carries over almost verbatim to our generalized setup; only the second step requires some additional explanation.
	More precisely, in~\cite{lai2024quantum} the defining relations are broken down into six statements ($W[\ell]$), ($M[\ell]$), ($Q[\ell]$), ($B_2[\ell]$), ($B_3[\ell]$), ($R[\ell]$).
	Their verification is achieved via a ``grand loop'' argument, once again inductive on the length $\ell$, which is the computational heart of that paper.
	The argument for the first three statement remains identical in our generalized setting.
	The other three statements get slightly modified:
	\begin{align}
		\tag{$B_2[\ell]$} &T_i\circ (T_j\otimes \id_{H_i}) = T_j\circ (T_i\otimes \id_{H_j})\circ (\id_{\mathbb{V}}\otimes \tau_{H_j,H_i})\quad \text{ on $\mathbb{V}^\ell$};\\
		\tag{$B_3[\ell]$} &T_{i+1}\circ(T_{i}\otimes \id_{H_{i+1}})\circ(T_{i+1}\otimes \id_{H_{i,i+1}}) = T_{i}\circ(T_{i+1}\otimes \id_{H_{i}})\circ(T_{i}\otimes \id_{H_{i+1,i}}) \circ E_i \quad \text{ on $\mathbb{V}^\ell$};\\
		\tag{$R[\ell]$} &\text{$f|_{\mathbb{V}^\ell}$ does not depend on the choice of reduced expressions of $w\in \mathfrak{S}_n$.}
	\end{align}
	With these reformulations, the proofs of $(B_2[\ell])$ and $(B_3[\ell])$ once again remain identical.
	Finally, in order to check the statement $(R[\ell])$, one has to show the following equalities:
	\begin{align*}
		f&\circ(\tau_{H_j,H_i}\otimes \id_A) = (\tau_{H_j,H_i} \oplus \id_{\mathbb{V}^1})\circ f&\in \Hom_{\cal{C}}(H_{ji}\otimes A, \mathbb{V}^2),\\
		f&\circ(E_i\otimes \id_A) = (E \oplus \id_{\mathbb{V}^2})\circ f &\in \Hom_{\cal{C}}(H_{i+1,i,i+1}\otimes A, \mathbb{V}^3).
	\end{align*}
	Recall that the map $f^{(\ell)}:\mathbb{V}^\ell\otimes A\to \mathbb{V}^\ell$ is defined as follows:
	\[
		f^{(\ell)}|_{A\otimes H_{ws_i}} = 
		\begin{cases*}
			\mu_A & \text{ if $l=0$,}\\
			(\id\otimes \sigma)\circ (f^{(\ell-1)}\otimes \id)\circ T_i^{(\ell)} + (\id\otimes \rho)\circ f^{(\ell-1)} & \text{ if $r(ws_i) = r(w)s_i$.}\\
		\end{cases*}
	\]
	One uses this definition to expand both sides of the equalities above and check them term by term.
	By the same inductive argument as in~\cite[App.~B]{lai2024quantum}, for the length $2$ relation this boils down to~\eqref{eq:sig-rho}, and for the length $3$ relation to (\ref{diag:br1}--\ref{diag:br3}).
	This concludes the proof.
\end{proof}

\begin{rmk}\label{rmk:Zamolod}
	For a general Hecke-type algebra, one needs to check more relations in order to ensure the existence of a nice basis, see e.g. the main theorem of~\cite{Elias-diamond}.
	In our special case these checks are superfluous by a minor miracle.
	If one deforms the cubic relation $\mathbf{B}_3$ this ceases to be the case; however, see \cref{prop:rho-zero-nondef}.
\end{rmk}

\subsection{On the choice of diagrammatics}\label{ex:Frob-special}
Let us conclude the section by explaining why we opted to draw the identity maps $Z_i$ as six-valent vertices.
Consider the very degenerate setup, where $\bbk$ is a field, $\cal{C} = \Vec$, $F = \bbk$ the trivial algebra, and $H$ a vector space.
The relations \eqref{diag:wr} imply that that $\sigma = \id_H$, $\rho = 0$, and the relations \eqref{diag:wr2} are trivially satisfied.
Set $R=0$, and treat $S:H\otimes H\to H$ as an algebra product.
Then most of the relations (\ref{diag:TTT}--\ref{diag:br5}) become trivial, except for the following consequences of \eqref{diag:TTT} and \eqref{diag:br4}:
\[
\tikz[thick,xscale=.45,yscale=.45,font=\footnotesize,baseline=(current  bounding  box.center)]{
	\dmergedot{0}{0}{1}{1}
	\draw [dotted,line cap=round,rounded corners, color=OI5] (2,0) -- (2,1);
	\dmergedot{0.5}{1}{2}{2}
	\draw [dotted,line cap=round,rounded corners, color=OI5] (1.25,2) -- (1.25,2.5);
	\ntxt{2.5}{1}{$=$}
	\begin{scope}[xshift=3cm]
		\dmergedot{1}{0}{2}{1}
		\draw [dotted,line cap=round,rounded corners, color=OI5] (0,0) -- (0,1);
		\dmergedot{0}{1}{1.5}{2}
		\draw [dotted,line cap=round,rounded corners, color=OI5] (0.75,2) -- (0.75,2.5);
		\ntxt{2.3}{0.9}{,}
	\end{scope}
}\qquad
\tikz[thick,xscale=.45,yscale=.45,font=\footnotesize,baseline=(current  bounding  box.center)]{
	\dmergedot{0}{0}{1}{1}
	\morTi{0.5}{1}{3}{3}
	\draw [dotted,line cap=round,rounded corners, color=OI1] (1.75,0) -- (1.75,1);
	\draw [dotted,line cap=round,rounded corners, color=OI5] (3,0) -- (3,1);
	\ntxt{3.5}{1.5}{$=$}
	\begin{scope}[xshift=4cm]
		\morTi{1}{0}{3}{1.2}
		\draw [dotted,line cap=round,rounded corners, color=OI5] (0,0) -- (0,1.1);
		\morTi{0}{1.1}{2}{2.2}
		\draw [dotted,line cap=round,rounded corners, color=OI1] (3,1.1) -- (3,2.2);
		\draw [dotted,line cap=round,rounded corners, color=OI1] (0,2.2) -- (0,3);
		\draw [dotted,line cap=round,rounded corners, color=OI5] (1,2.2) -- (1,3);
		\dmergedot[OI1]{2}{2.2}{3}{2.8}
		\draw [dotted,line cap=round,rounded corners, color=OI1] (2.5,2.8) -- (2.5,3);
		\ntxt{3.5}{1.4}{,}
	\end{scope}
}\qquad
\tikz[thick,xscale=.45,yscale=.45,font=\footnotesize,baseline=(current  bounding  box.center)]{
	\dmergedot{2}{0}{3}{1}
	\morTi{0}{1}{2.5}{3}
	\draw [dotted,line cap=round,rounded corners, color=OI1] (1.25,0) -- (1.25,1);
	\draw [dotted,line cap=round,rounded corners, color=OI5] (0,0) -- (0,1);
	\ntxt{3.5}{1.5}{$=$}
	\begin{scope}[xshift=4cm]
		\morTi{0}{0}{2}{1.2}
		\draw [dotted,line cap=round,rounded corners, color=OI5] (3,0) -- (3,1.1);
		\morTi{1}{1.1}{3}{2.2}
		\draw [dotted,line cap=round,rounded corners, color=OI1] (0,1.1) -- (0,2.2);
		\draw [dotted,line cap=round,rounded corners, color=OI5] (2,2.2) -- (2,3);
		\draw [dotted,line cap=round,rounded corners, color=OI1] (3,2.2) -- (3,3);
		\dmergedot[OI1]{0}{2.2}{1}{2.8}
		\draw [dotted,line cap=round,rounded corners, color=OI1] (0.5,2.8) -- (0.5,3);
		\ntxt{3.5}{1.4}{.}
	\end{scope}
}
\]
For our choice $Z_i = \id_{H^{\otimes 3}}$, the last two relations are equivalent to $S\otimes \id_{H^{\otimes 2}} = \id_{H^{\otimes 2}} \otimes S$, so that either $H$ is one-dimensional or $S = 0$.
However, there is a more general setup where one can exhibit non-trivial examples.
Namely, let us allow the objects $H_i$ to be non-isomorphic, and let $Z_i$ be some map from $H_{i+1,i,i+1}$ to $H_{i,i+1,i}$.
Then the relations above are satisfied in the category of Soergel bimodules $\mathbb{S}\mathrm{Bim}$, see e.g.~\cite[\S 1.4.1-2]{EW16}.
Of course, \cref{thm:QWP-basis} has no reason to extend to this setting; the appropriate generalization would at the very least require to impose the $A_3$-Zamolodchikov relation on the elements $Z_i$, see also \cref{rmk:Zamolod}.
Still, the author believes that there should be an interesting theory of quantum wreath products internal to $\mathbb{S}\mathrm{Bim}$.

\medskip
\section{The invertible case}\label{sec:inv}

\subsection{Yoneda notations}\label{ssec:invertible}
It will be convenient for us to write down some relations from Yoneda point of view.
Given $X, M\in\cal{C}$, we refer to maps $X\to M$ as \textit{$X$-elements} of $M$.
Note that the set of $X$-elements $M_X\coloneqq \Hom_{\cal{C}}(X,M)$ is a $\bbk$-module. By a slight abuse of notation, we write $f(x) \coloneqq f\circ x$ for any $f\in \Hom_{\cal{C}}(M,N)$ and $x\in M_X$.
It follows from Yoneda's lemma that $f=g$ in $\Hom_{\cal{C}}(M,N)$ if and only if $f(x)=g(x)$ for all $x\in M_X$, $X\in \cal{C}$.
In view of this, when $X$ runs over all objects of $\cal{C}$ we will further abuse the notation and shorten ``$x\in M_X$, $X\in \cal{C}$'' to ``$x\in M$''.
Given a (unital) algebra $A\in\cal{C}$, we also write $xy\coloneqq \mu_A(x\otimes y)\in A_{X\otimes Y}$ for $x\in A_X$, $y\in A_{Y}$.
Finally, given $x\in F_X$, $y\in (F\otimes F)_Y$ and $1\leq i<j\leq n$, we write
\begin{equation}\label{eq:coords}
	x_i\coloneqq \iota_{\{i\}}(x)\in (F^{\otimes n})_X, \qquad y_{ij}\coloneqq \iota_{\{i,j\}}(y)\in (F^{\otimes n})_Y,
\end{equation}
see \cref{subs:qwp} for the notation.
For example, when $\cal{C} = \Vec$ and $X = \bbk$, the $\bbk$-elements of $M\in \cal{C}$ are precisely what one would ordinarily call elements or vectors.
When $\cal{C} = \sVec$, then $M_{\bbk^{1|0}}$ is the subspace of even elements of $M$, and $M_{\bbk^{0|1}}$ is the subspace of odd elements.
The casual reader is strongly encouraged to only ever keep these two examples in mind for the rest of the paper.

\subsection{$H$ is invertible}
Recall that an object $L\in \cal{C}$ is called \textit{invertible} if both the unit $\bb{1}\to L\otimes L^\vee$ and the counit $L^\vee\otimes L\to \bb{1}$ are isomorphisms, where $L^\vee$ is the dual of $L$.
By duality adjunction, we have $\End_{\cal{C}}(L) = \End_{\cal{C}}(\bb{1}) = \bbk$.
For an invertible $L$, we sometimes drop the symbol of tensor product from powers, and write $L^k = L^{\otimes k}$, $L^{-k} = (L^\vee)^{\otimes k}$ for $k>0$.

\begin{expl}
	The only invertible element in $\cal{C} = \Vec$ is the unit $\bb{1} = \bbk^1$.
	In super-vector spaces $\cal{C} = \sVec$, the invertible elements are the even one-dimensional space $\bbk^{1|0} = \bb{1}$ and the odd one-dimensional space $\bbk^{0|1}$. 
\end{expl}

From now on, unless otherwise stated, we assume that \textit{$H$ is invertible}.
We impose this condition for several reasons.
First, we simply do not have any examples with non-invertible $H$ in mind (besides \cref{rmk:super-KLR}).
Further, when $H$ is not invertible it might be more natural to consider a more general setup, as discussed in \cref{ex:Frob-special}. 
Finally, it is much easier to work with the algebraic relations (\ref{def:wr1}--\ref{def:br5}) below than with the diagrammatic relations (\ref{diag:wr}--\ref{diag:br5}).

\subsection{QWP algebras, redux}\label{ssec:inv-QWP}
In order for the algebra $F\wr \cal{H}_n$ to be well-defined, we need to impose the conditions (\ref{eq:P-unit},~\ref{eq:P-der}).
When $H$ is invertible, they admit a simpler form.
Namely, any $\sigma\in \Hom_{\cal{C}}(H\otimes F^{\otimes 2}, F^{\otimes 2}\otimes H)$ can be written as $\sigma = (\sigma'\otimes\id_H)\circ \tau_{H,F^{\otimes 2}}$, where $\sigma'\in \End_{\cal{C}}(F^{\otimes 2})$.
Let us write $\sigma$ instead of $\sigma'$ by abuse of notation.
Suppressing the braiding $\tau_{H,F^{\otimes 2}}$, as well as $H$, from the notation, the conditions rewrite as follows: 
\begin{align}
&\label{def:wr1}\tag{P1}
\sigma(1\otimes 1) = 1\otimes 1, 
&&  \rho(1 \otimes1)=0,
\\
&\label{def:wr2}\tag{P2}
\sigma(ab) = \sigma(a)\sigma(b),
&&
\rho(ab) =  \sigma(a)\rho(b) + \rho(a)b.
\end{align}
In other words, $\sigma$ is an algebra endomorphism of $F^{\otimes 2}$, and $\rho$ is an ($H$-twisted) left $\sigma$-derivation.

Recall that $A = F^{\otimes n}$.
Fixing $\sigma$ an endomorphism of $F^{\otimes 2}$, we have
\begin{align*}
	\sigma_i: A\to A, \quad \rho_i: H\otimes A\to A,\\
	S_i\in A_H, \quad
	R_i\in A_{H^2},\quad 
	Z_i = 1\in \bbk.
\end{align*}
Let us write $\sigma_{ij} = \sigma_{i}\sigma_{j}$, $\rho_{ij} = \rho_{i}\rho_{j}$ and so on for brevity.
By invertibility, the braiding $\tau_{H,H}$ is a scalar, and the braiding $\tau_{H,F}$ an endomorphism of $F$:
\[
	\tau_{H,H}\rightsquigarrow \epsilon\in \bbk^*, \qquad \tau_{H,F}\rightsquigarrow t\in \End_{\cal{C}}(F).
\]
It follows from \eqref{eq:sig-rho} that the maps $\sigma_i, \rho_i$ satisfy
\begin{equation}\label{eq:sig-rho-inv}
	\sigma_{ji} = \sigma_{ij}, \quad \rho_{ji} = \epsilon\rho_{ij},\quad \rho_j\sigma_i = \epsilon\sigma_i\rho_j, \quad\sigma_j\rho_i = \epsilon\rho_i\sigma_j; \qquad j\geq i+2.
\end{equation}
Furthermore, the defining relations of $F\wr\cal{H}_n$ can be rewritten as follows:
\begin{align}
	&H_ia = \sigma_i(a)H_i + \rho_i(a),&&a\in A,\tag{wreath}\\
	&H_i^2 = S_iH_i + R_i, &&1\leq i\leq n-1,\tag{quadratic}\\
	&H_i H_{i+1} H_i = H_{i+1} H_i H_{i+1},  \quad H_j H_i = \epsilon H_i H_j, &&1\leq i\leq n-2, \quad j\geq i+2.\tag{braid}
\end{align}
Here, the wreath relation comes from the definition of map $\mu_{1,0}$ in~\eqref{eq:mu-kj}, and the other relations from the defining ideal $\mathbf{I}_n \to A\otimes T(\Vn)$.

\subsection{Algebraic form of necessary relations}\label{ssec:alg-form}
Let us write $l_X$, resp. $r_X$ stand for left, resp. right multiplication by $X$.
Ignoring the dashed lines in the diagrams (\ref{diag:TTT}--\ref{diag:br5}), the conditions of \cref{thm:QWP-basis} translate into the relations below, where $\{i,j\}=\{1,2\}$: 
\begin{align}
	&\label{def:TTT}\tag{P3} SR = \rho(R) + \sigma(S)R,\qquad
	S^2+R = \rho(S) + \sigma(S)S + \sigma(R),
	\\
	&\label{def:qu1}\tag{P4} {l}_{S}\sigma = \sigma\rho+\rho\sigma+r_S\sigma^2,
	\qquad
	{l}_S\rho + {l}_R = \rho^2 + r_R\sigma^2,
	\\
	&\label{def:br1}\tag{P5}\sigma_{1} \sigma_2 \sigma_{1} = \sigma_2\sigma_{1}\sigma_2,
	\qquad
	\sigma_i \sigma_j \rho_i = \rho_j\sigma_i\sigma_j,
	\\
	&\label{def:br2}\tag{P6} \rho_i\sigma_j\rho_i  = \sigma_j\rho_i\rho_j + \rho_j\rho_i\sigma_j + r_{S_j}\sigma_j\rho_i\sigma_j,
	\\
	&\label{def:br3}\tag{P7} \rho_2\rho_1\rho_2 + r_{R_2} \sigma_2 \rho_1 \sigma_2 = \rho_1\rho_2\rho_1 + r_{R_1} \sigma_1 \rho_2 \sigma_1,
	\\
	&\label{def:br4}\tag{P8}S_i = \sigma_j\sigma_i(S_j),
	\qquad 
	R_i = \sigma_j\sigma_i(R_j),
	\qquad
	\rho_i\sigma_j(S_i) = 0 = \rho_{i}\sigma_j(R_i),
	\\
	&\label{def:br5}\tag{P9} \rho_i\rho_j(S_i) + \sigma_i\rho_j(S_i)S_i + \sigma_i\rho_j(R_i) = 0 = \sigma_i\rho_j(S_i)R_i + \rho_i\rho_j(R_i).
\end{align}

Note that these relations do not depend on braidings $\tau_{H,A}\in \End_{\cal{C}}(A)$, as these braidings do not occur in the diagrams (\ref{diag:TTT}--\ref{diag:br5}) either.

\begin{cor}\label{prop:LNX331}
	Assume that $H$ is invertible.
	The QWP algebra $F\wr \cal{H}_n$ has a PBW basis if and only if the relations (\ref{def:wr1}--\ref{def:br5}) hold.\qed
\end{cor}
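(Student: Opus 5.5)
This is a direct specialization of \cref{thm:QWP-basis}: by that theorem, $F\wr\cal{H}_n$ has a PBW basis if and only if the diagrammatic conditions (\ref{diag:wr}--\ref{diag:br5}) hold. So the plan is to show that, when $H$ is invertible, each diagrammatic condition is equivalent to the corresponding algebraic relation among (\ref{def:wr1}--\ref{def:br5}).

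\textbf{Step 1: reduce to endomorphisms of $F^{\otimes 2}$.} Since $H$ is invertible, tensoring with $H$ and with $H^\vee$ gives mutually inverse autoequivalences of $\cal{C}$. Consequently every morphism $H^{\otimes a}\otimes X\to Y\otimes H^{\otimes b}$ factors uniquely through the braiding, as a morphism $X\to Y\otimes H^{\otimes(b-a)}$ precomposed with $\tau$. In particular:
\begin{itemize}
\item $\sigma$ becomes an endomorphism of $F^{\otimes 2}$ and $\rho$ an $H$-twisted map;
\item $S$ becomes an $H$-element of $A$ and $R$ an $H^2$-element;
\item $Z_i$ is a scalar, which equals $1$ because it is an identity map;
\item $\tau_{H,H}$ becomes a scalar $\epsilon$.
\end{itemize}

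\textbf{Step 2: translate each diagram.} Under this dictionary, each diagram (\ref{diag:TTT}--\ref{diag:br5}) is an equality of morphisms out of $H^{\otimes k}\otimes A$ (or $H^{\otimes k}$). By the Yoneda conventions of \cref{ssec:invertible}, such an equality can be tested on $X$-elements. I would then read the diagrams off as composites:
\begin{itemize}
\item a white dot is $\sigma_i$ and a black dot is $\rho_i$;
\item a box with an outgoing solid strand merging into $A$ is left or right multiplication by $S_i$ or $R_i$, depending on the side of the merge;
\item remaining crossings of $H$-strands with $A$ are absorbed by the identification of Step 1.
\end{itemize}
For example, (\ref{diag:wr}) and (\ref{diag:wr2}) give (\ref{def:wr1}) and (\ref{def:wr2}) directly. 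Diagram (\ref{diag:TTT}), evaluated at $1\in A$ and its two components, gives (\ref{def:TTT}). Diagram (\ref{diag:qu1}) gives (\ref{def:qu1}) as operator identities. Diagrams (\ref{diag:br1}--\ref{diag:br3}) give (\ref{def:br1}--\ref{def:br3}), and (\ref{diag:br4}), (\ref{diag:br5}) give (\ref{def:br4}), (\ref{def:br5}). The index pairs $\{i,j\}=\{1,2\}$ arise from the two colourings: each diagrammatic family appears for both orientations $H_{121}$ and $H_{212}$.

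\textbf{Expected main obstacle: bookkeeping of scalars and braidings.} Applying Step 1 introduces powers of $\epsilon$ and copies of $t$ (the braiding $\tau_{H,F}$). I need to check that these cancel identically on both sides of each relation. The argument is that both sides of every diagram have the same numbers of $H$-strands at the source and the target. After normalizing all $H$-strands to the far left through the braiding, the braiding contributions agree on both sides, since in a braided category the relevant composites of braidings are canonical. This is consistent with the remark following the list that $\tau_{H,A}$ does not occur in (\ref{diag:TTT}--\ref{diag:br5}).

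With the dictionary established, the equivalence of each diagram with its algebraic relation is a routine check, and the corollary follows from \cref{thm:QWP-basis}.
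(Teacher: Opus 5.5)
Your overall route is the paper's: the corollary is \cref{thm:QWP-basis} plus a term-by-term translation of \eqref{diag:wr}--\eqref{diag:br5}, which the paper does by ``ignoring the dashed lines''. The gap is in the step you add to justify that translation, namely that all braiding contributions cancel by coherence. Coherence only identifies composites of braidings that have the same underlying braid, and here the braids differ from term to term. Each term's normalizing braid depends on which input $H$-strand feeds which $\rho$-, $S$- or $R$-vertex and which strands pass through. Moreover, $Z_i$ matches $H_{i+1,i,i+1}$ with $H_{i,i+1,i}$ position by position. Since $H$ is invertible, every crossing of two $H$-strands is the scalar $\epsilon$, so different terms acquire different powers of $\epsilon$.

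Concretely, write $\sigma'_i$ for your Step 1 endomorphism, $\sigma_i^{\mathrm{diag}}=(\sigma'_i\otimes\id_H)\circ\tau_{H,A}$, and bring each term to the form $(f\otimes\id_H)\circ(\id_{H\otimes H}\otimes\tau_{H,A})$, with $f$ the Yoneda composite. In the first diagram of \eqref{diag:br2}, the left-hand side needs one $H$--$H$ crossing, the $\sigma'_1\rho_2\rho_1$ term needs two, and the $\rho_1\rho_2\sigma'_1$ term needs none. So \eqref{diag:br2} is equivalent to
\[ \rho_i\sigma'_j\rho_i=\epsilon\,\sigma'_j\rho_i\rho_j+\epsilon^{-1}\rho_j\rho_i\sigma'_j+(\text{$S_j$-term}),\qquad \{i,j\}=\{1,2\}, \]
not to \eqref{def:br2}. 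Similarly, \eqref{diag:br1} gives $\epsilon^{2}\sigma'_i\sigma'_j\rho_i=\rho_j\sigma'_i\sigma'_j$, which agrees with \eqref{def:br12} only because $\epsilon^2=1$ for a symmetric braiding.

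For $H$ odd in $\sVec$ this is a real sign, and it appears in an algebra that does have a PBW basis. Take the odd nilHecke algebra of \cref{ssec:odd-Hecke}, with $S=R=0$. There your $\sigma'_i$ is the unsigned transposition $x_i\leftrightarrow x_{i+1}$, and $\rho_i(x_i)=\rho_i(x_{i+1})=1$, $\rho_i(x_k)=0$ otherwise. For $a=x_2x_3$ you get $\rho_1\sigma'_2\rho_1(a)=\rho_1(x_2)=1$. On the other side, $\rho_2(a)=0$ and $\rho_2\rho_1\sigma'_2(a)=\rho_2\rho_1(x_3x_2)=\rho_2(-x_3)=-1$, so the right-hand side is $-1$. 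Hence, with your dictionary, \eqref{def:br2} fails, and your cancellation argument would prove a false statement.

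What makes the paper's list correct is its convention of suppressing the braiding. In \eqref{def:wr1}--\eqref{def:br5}, $\sigma_i$ is the map for which $H_ia=\sigma_i(a)H_i+\rho_i(a)$ holds literally, i.e.\ $\tau_{H,A}$ is absorbed into $\sigma_i$. For $H$ odd in $\sVec$ this means $\sigma_i(a)=(-1)^{|a|}\sigma'_i(a)$, the signed transposition; equivalently, you compute in the underlying ungraded algebra. With that reading each diagram is literally the associativity computation of \cite[App.~B.1]{lai2024quantum}, and the relations come out exactly as listed. In the example, both sides of \eqref{def:br2} then equal $-1$ on $x_2x_3$. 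So you must either adopt that convention explicitly or carry the powers of $\epsilon$ through the translation; the bookkeeping cannot be dismissed by coherence.
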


\subsection{Localization}\label{ssec:loc-alg}
Most of the examples of QWPs we want to study require us to slightly extend the algebra $F^{\otimes n}$.
Consider the tensor algebra $T(L) = \bigoplus_{k\geq 0} L^{k}$, as well as its bi-infinite version $T_{\bbZ}(L)\coloneqq \bigoplus_{k\in \bbZ} L^{k}$.
We adapt the following definition from~\cite[Sec.~4.1]{coulem23}, which coincides with the usual definition of localization for $\cal{C} = \Vec,\sVec$.
\begin{defn}\label{def:loc}
	Let $L\in \cal{C}$ be invertible, $A\in \vec{\cal{C}}$ an (ind-)algebra object, and $x\in \Hom_{\cal{C}}(L,A)$.
	Assume that $\tau_{L,A}^2 = \id_{L\otimes A}$ and $x$ is central, that is $\mu\circ(x\otimes \id_A) = \mu\circ\tau_{L,A}\circ(x\otimes \id_A)$.
	The \textit{localization} of $A$ with respect to $x$ is defined by
	\[
		A[x^{-1}] \coloneqq A\otimes_{T(L)} T_{\bbZ}(L),
	\]
	where $T(L)\to A$ is the algebra map induced by $x$, and $T(L)\to T_{\bbZ}(L)$ the natural inclusion.

	Similarly, for any (countable) collection of central elements $x_i:L_i\to A$, $i\in I$ with $L_i$ invertible, satisfying $\tau_{L_i,L_j}^2 = \id$.
	Write $\mathscr{S} = \{x_i: i\in I\}$. We define $A[\mathscr{S}^{-1}]$ by iterating the definition above.
\end{defn}

\begin{rmk}
	In~\cite{coulem23}, localization is only defined for commutative algebras.
	Since we impose centrality on $x$, this is not an issue, as $A\otimes_{T(L)} T_{\bbZ}(L) = A\otimes_{Z(A)}(Z(A)_{T(L)} T_{\bbZ}(L))$.
\end{rmk}

Now let $A \coloneqq F^{\otimes n}$, $\sigma,\tau$ as before, and $x:L\to F^{\otimes 2}$ as in \cref{def:loc}.
Consider the set $\mathscr{S}_x$ of all maps one obtains from $x_i$, $1\leq i\leq n-1$ by applying sequences of $\sigma_j,\rho_j$, $1\leq j\leq n-1$.
Define 
\[
	A_\loc \coloneqq A[\mathscr{S}_x^{-1}].
\]
The maps $\sigma_i$, $\tau_i$ naturally extend to $A_\loc$.
We can therefore pick $S,R\in (F^{\otimes 2})_\loc$, and repeat \cref{def:QWP} verbatim.
In order to make the distinction between the two setups, we denote the resulting algebras by $(F\wr \cal{H}_n)_\loc$.
Note that, since the proof of \cref{thm:QWP-basis} treats $A$ as a black box, it also applies to these localized algebras.

\subsection{Polynomial representation and diagrammatics}\label{subs:polyrep}
In order to realize quantum wreath product $F\wr\cal{H}_n$ as a diagrammatic algebra, we need to construct its action on some object of $\vec{\cal{C}}$.
The most natural candidate for this is $F^{\otimes n}$, together with the requirement that $F^{\otimes n}\subset F\wr\cal{H}_n$ acts by left multiplication.
Let us fix an $H$-element $\alpha\in (F^{\otimes 2})_H = \Hom_{\cal{C}}(H,F^{\otimes 2})$, and define the following maps for $1\leq i\leq n-1$: 
\begin{equation}\label{eq:repn}
	h_i : H_i\otimes A \to A, \qquad h_i = \mu_A\circ (\id_A \otimes \alpha_i)\circ \sigma_i + \rho_i = 
	\tikz[thick,xscale=.52,yscale=.52,font=\small,baseline=(current  bounding  box.center)]{
		\sigact{0}{0}{1}{0.7}
		\draw (0,0.7) -- (0,1.3);
		\opbox{0.7}{0.7}{1.3}{1.3}{$\alpha_i$}
		\dmerge{0}{1.3}{1}{1.8}
		\draw (0.5,1.8) -- (0.5,2);
		\ntxt{1.7}{1.1}{$+$}
		\rhoact{2}{0}{3}{2}
		\ntxt{3.3}{1}{.}
	}
\end{equation}
In Yoneda notation, this becomes $h_i(x) = \sigma_i(x)\alpha_i + \rho_i(x)$.

Note that we have $\alpha_j\alpha_i = \epsilon\alpha_i\alpha_j$ for $j\geq i+2$.
Indeed, let us set $i=1$, $j=3$ without loss of generality.
Drawing $F\otimes F$ as one solid string, we have
	\[
		\tikz[thick,xscale=.55,yscale=.55,font=\small,baseline=(current  bounding  box.center)]{
		\ntxt{-1.4}{1.4}{$\alpha_3\alpha_1 = $}
		\draw [dotted,line cap=round,rounded corners] (1,0) -- (1,0.5);
		\opbox{0.8}{0.5}{1.2}{0.9}{$\alpha$}
		\draw (1,0.9) -- (1,1.3);
		\identity{0}{0}{2}
		\identity{3}{0}{2}
		\draw [dotted,line cap=round,rounded corners] (2,0) -- (2,0.5);
		\opbox{1.8}{0.5}{2.2}{0.9}{$\alpha$}
		\draw (2,0.9) -- (2,1.3);
		\overcros{1}{1.3}{2}{2}
		\dmerge{0}{2}{1}{2.7}
		\dmerge{2}{2}{3}{2.7}
		\draw (0.5,2.7) -- (0.5,3);
		\draw (2.5,2.7) -- (2.5,3);
		\ntxt{3.5}{1.5}{$=$}

		\begin{scope}[xshift=4cm]
			\draw [dotted,line cap=round,rounded corners] (1,0) -- (1,0.5);
			\opbox{0.8}{0.5}{1.2}{0.9}{$\alpha$}
			\draw (1,0.9) -- (1,1.5);
			\draw [dotted,line cap=round,rounded corners] (0,0) -- (0,0.5);
			\opbox{-0.2}{0.5}{0.2}{0.9}{$\alpha$}
			\draw (0,0.9) -- (0,1.5);
			\overcros{0}{1.5}{1}{3}
			\ntxt{1.5}{1.5}{$=$}
		\end{scope}

		\begin{scope}[xshift=6cm]
			\draw [dotted,line cap=round,rounded corners] (1,1.5) -- (1,2);
			\opbox{0.8}{2}{1.2}{2.4}{$\alpha$}
			\draw (1,2.4) -- (1,3);
			\draw [dotted,line cap=round,rounded corners] (0,1.5) -- (0,2);
			\opbox{-0.2}{2}{0.2}{2.4}{$\alpha$}
			\draw (0,2.4) -- (0,3);
			\overcrosdash{0}{0}{1}{1.5}
			\ntxt{1.5}{1.5}{$=$}
		\end{scope}

		\begin{scope}[xshift=8.6cm]
			\ntxt{-0.5}{1.5}{$\epsilon$}
			\draw [dotted,line cap=round,rounded corners] (1,0) -- (1,1.3);
			\opbox{0.8}{1.3}{1.2}{1.7}{$\alpha$}
			\draw (1,1.7) -- (1,3);
			\draw [dotted,line cap=round,rounded corners] (0,0) -- (0,1.3);
			\opbox{-0.2}{1.3}{0.2}{1.7}{$\alpha$}
			\draw (0,1.7) -- (0,3);
			\ntxt{1.6}{1.5}{$=$}
		\end{scope}

		\begin{scope}[xshift=11.3cm]
		\ntxt{-0.5}{1.5}{$\epsilon$}
		\draw [dotted,line cap=round,rounded corners] (0,0) -- (0,0.5);
		\opbox{-0.2}{0.5}{0.2}{0.9}{$\alpha$}
		\draw (0,0.9) -- (0,2);
		\identity{1}{0}{1.3}
		\identity{2}{0}{1.3}
		\draw [dotted,line cap=round,rounded corners] (3,0) -- (3,0.5);
		\opbox{2.8}{0.5}{3.2}{0.9}{$\alpha$}
		\draw (3,0.9) -- (3,2);
		\overcros{1}{1.3}{2}{2}
		\dmerge{0}{2}{1}{2.7}
		\dmerge{2}{2}{3}{2.7}
		\draw (0.5,2.7) -- (0.5,3);
		\draw (2.5,2.7) -- (2.5,3);
		\ntxt{4.5}{1.5}{$= \epsilon \alpha_1\alpha_3.$}
		\end{scope}
	}
	\]

\begin{prop}\label{prop:action}
	Assume that the QWP algebra $F\wr\cal{H}_n$ has PBW basis.
	The collection of maps $h_i$ together with the left action of $F^{\otimes n}$ defines an action of $F\wr\cal{H}_n$ on $F^{\otimes n}$ if and only if the following conditions hold:
	\begin{align}
		&\label{def:quad-al}\tag{R1} R - \rho(\alpha) = (\sigma(\alpha) - S)\alpha,
		\\
		&\label{def:cube-al}\tag{R2}\begin{aligned}
			(\sigma_{12}(\alpha_1)&\sigma_2(\alpha_1) +\sigma_1\rho_2(\alpha_1))\alpha_1 + \rho_1(\sigma_2(\alpha_1)\alpha_2) + \rho_{12}(\alpha_1)\\
			&= (\sigma_{21}(\alpha_2)\sigma_1(\alpha_2)+\sigma_2\rho_1(\alpha_2))\alpha_2 + \rho_2(\sigma_1(\alpha_2)\alpha_1) + \rho_{21}(\alpha_2).
		\end{aligned}
	\end{align}
\end{prop}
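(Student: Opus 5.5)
The plan is to verify the defining relations of $F\wr\cal{H}_n$ directly on the candidate operators. Write $\mathbf{f}\mapsto l_{\mathbf{f}}$ for the left regular action of $A=F^{\otimes n}$, and $h_i(x)=\sigma_i(x)\alpha_i+\rho_i(x)$. Since $F\wr\cal{H}_n$ is presented as the quotient of $A\otimes T(\Vn)$ by $\langle\mathbf{I}_n\rangle$, an action on $A$ extending the left regular one amounts to two things. First, a $T(\Vn)$-action compatible with $A$ in the sense of the wreath relation, so that we get an algebra map $A\otimes T(\Vn)\to\End(A)$. Second, this map must kill the images of $\mathbf{Q}^i$, $\mathbf{B}_2^{ij}$ and $\mathbf{B}_3^i$.

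The first step is the wreath relation, $h_i(ax)=\sigma_i(a)h_i(x)+\rho_i(a)x$. This follows at once from \eqref{def:wr2}: $\sigma_i(ax)\alpha_i+\rho_i(ax)=\sigma_i(a)\sigma_i(x)\alpha_i+\sigma_i(a)\rho_i(x)+\rho_i(a)x$. So by the universal property of the construction in \cref{prop:tens-alg}, we get an algebra map $A\otimes T(\Vn)\to\End(A)$ in $\vec{\cal{C}}$.

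By the wreath relation, the operators representing each generator of $\mathbf{I}_n$ are left $A$-linear combinations of $\sigma$-twisted words in the $h$'s. The key reduction is the following. A relation of the form $P=0$, where $P$ is an element of the ideal generated by the relations, acts on $A$ by an operator $x\mapsto\sum_w c_w\, w(x)$. If $F\wr\cal{H}_n$ has PBW basis, we may first rewrite $P\cdot x$ in normal form using the algebra relations, and only then apply it to $1$. Concretely, each relation $P$ lies in the left ideal that acts trivially on $1$ exactly when $P\cdot 1$ vanishes. Since $P\mathbf{f}$ is again in the two-sided ideal, and the operator $P$ evaluated at $x$ equals $(Px)\cdot 1$ by associativity of the $T(\Vn)$-module structure, it suffices to check the relations on the vector $1\in A$. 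Here we use that $A\otimes T(\Vn)$ acts through an algebra map and that the action of $x\in A$ is left multiplication, so $x=x\cdot 1$. This gives a cyclic module, and a two-sided ideal annihilates a cyclic module iff it annihilates the generator.

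So the conditions become: $\mathbf{Q}^i$, $\mathbf{B}_2$ and $\mathbf{B}_3$ applied to $1$ vanish. Using \eqref{def:wr1} we get $h_i(1)=\alpha_i$.

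The quadratic relation gives $h_i(\alpha_i)=\sigma_i(\alpha_i)\alpha_i+\rho_i(\alpha_i)$, which must equal $S_i\alpha_i+R_i$. This is exactly \eqref{def:quad-al}, up to the $\epsilon$ and braiding bookkeeping for invertible $H$.

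The far commutation relation gives $h_jh_i(1)=h_j(\alpha_i)=\sigma_j(\alpha_i)\alpha_j+\rho_j(\alpha_i)$. Since $\alpha_i$ only involves tensor factors $i,i+1$, which $\sigma_j,\rho_j$ do not touch for $j\ge i+2$, this simplifies to $\alpha_i\alpha_j$ by \eqref{def:wr1} applied factorwise. So the required identity reduces to the computation $\alpha_j\alpha_i=\epsilon\alpha_i\alpha_j$ displayed above, and holds automatically.

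The braid relation requires computing $h_1h_2h_1(1)$ and $h_2h_1h_2(1)$. Expanding $h_1h_2h_1(1)=h_1(\sigma_2(\alpha_1)\alpha_2+\rho_2(\alpha_1))$ with the wreath rule yields
\[
\sigma_1\sigma_2(\alpha_1)\sigma_1(\alpha_2)\alpha_1+\sigma_1\rho_2(\alpha_1)\alpha_1+\rho_1(\sigma_2(\alpha_1)\alpha_2)+\rho_1\rho_2(\alpha_1).
\]
The symmetric expansion gives the other side, and matching them gives \eqref{def:cube-al}. The main obstacle is the index and $\epsilon$ bookkeeping: the statement writes $\sigma_{12}(\alpha_1)\sigma_2(\alpha_1)$, so I would need to reconcile $\sigma_1(\alpha_2)$ with $\sigma_2(\alpha_1)$ via \eqref{def:br1} and the $\sigma$-invariance relations \eqref{def:br4}, and track the braidings of $H$ past $F$ that appear when invertible $H$ is suppressed.

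Conversely, if \eqref{def:quad-al} and \eqref{def:cube-al} hold, the same computations show that all generators of $\mathbf{I}_n$ kill $1$. By the cyclicity argument they then kill $A$, giving the action.
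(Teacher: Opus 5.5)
The gap is in your reduction to the vector $1$. It is true that a two-sided ideal annihilates a cyclic module as soon as it annihilates the generator. But what you actually verify is only that the \emph{generators} $\mathbf{Q}^i,\mathbf{B}_2^{ij},\mathbf{B}_3^i$ kill $1$. That shows the left ideal $J=\sum_P (A\otimes T(\Vn))\,P$ kills $1$; it does not show the two-sided ideal $\langle\mathbf{I}_n\rangle$ does. Your step ``$P\mathbf{f}$ lies in the two-sided ideal, hence $(P\mathbf{f})\cdot 1=0$'' is circular, because $(P\mathbf{f})\cdot 1=P\cdot\mathbf{f}$ is exactly the quantity whose vanishing is at stake. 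A telltale sign is that the PBW hypothesis plays no identifiable role in your argument. In the paper's sufficiency proof, \eqref{def:qu1}, \eqref{def:br11}, \eqref{def:br12}, \eqref{def:br2}, \eqref{def:br3} are used essentially, and they are what make evaluation at $1$ sufficient.

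The missing lemma is that, under PBW, $P\mathbf{f}\in\sum_Q A\,Q$ inside $A\otimes T(\Vn)$ for every defining relation $P$ and every $\mathbf{f}\in A$, with $Q$ ranging over the defining relations. It holds case by case:
\begin{itemize}
\item For $Q_i=H_i^2-S_iH_i-R_i$, the wreath rule gives $Q_i\mathbf{f}=\sigma_i^2(\mathbf{f})\,Q_i$ on the nose; this identity is exactly \eqref{def:qu1}.
\item For $H_jH_i-\epsilon H_iH_j$, you get $\sigma_{ij}(\mathbf{f})$ times the relation, by \eqref{eq:sig-rho-inv}.
\item For $P=H_1H_2H_1-H_2H_1H_2$, use \eqref{def:br11} and \eqref{def:br12} to cancel the discrepancies in the $H_{121}$, $H_{12}$, $H_{21}$ terms. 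This gives
\[
P\mathbf{f}=\sigma_{121}(\mathbf{f})\,P+\sigma_1\rho_2\sigma_1(\mathbf{f})\,Q_1-\sigma_2\rho_1\sigma_2(\mathbf{f})\,Q_2+L,\qquad L\in A\oplus AH_1\oplus AH_2 .
\]
Since $L$ lies in the ideal, injectivity of $\varphi_n$ in length $\le 1$ forces $L=0$; this is precisely \eqref{def:br2} and \eqref{def:br3}.
\end{itemize}
Then $P\cdot\mathbf{f}=(P\mathbf{f})\cdot 1$ is a left $A$-combination of the values $Q\cdot 1$, which vanish by your computations.

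With this lemma added, your route works and is a tidier packaging of the paper's proof. The paper instead expands $H_1H_2H_1$ on an arbitrary $a$ and matches three groups of terms: via \eqref{def:br11} with \eqref{def:cube-al}, via \eqref{def:br12}, and via \eqref{def:br2}, \eqref{def:quad-al}, \eqref{def:br3}.

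Finally, there is nothing to reconcile in the cubic term. Your $\sigma_{12}(\alpha_1)\sigma_1(\alpha_2)\alpha_1$ is the correct expression, and it is what appears in the paper's own proof. The $\sigma_2(\alpha_1)$ in the displayed \eqref{def:cube-al} is a typo. In any case \eqref{def:br4} concerns $S,R$, not $\alpha$, so it could not help here.
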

\begin{proof}
	Let $1\in F^{\otimes n}$ be the unit.
	One directly checks that the condition~\eqref{def:quad-al} is equivalent to $H^2(1) = SH(1) + R(1)$, and~\eqref{def:quad-al} is equivalent to $H_1H_2H_1(1) = H_2H_1H_2(1)$.
	Hence these two conditions are necessary.
	For sufficiency, we need to check the compatibility of formulas~\eqref{eq:repn} with the defining relations of $F\wr\cal{H}_n$, as described in \cref{ssec:inv-QWP}.
	For the wreath relation, we have
	\begin{align*}
		(Ha)b &= H(ab) 
		= \sigma(ab)\alpha + \rho(ab)
		\stackrel{\eqref{def:wr2}}{=} \sigma(a)\sigma(b)\alpha + \sigma(a)\rho(b) + \rho(a)b\\
		& = (\sigma(a)H + \rho(a))b,
	\end{align*}
	so that it holds automatically.
	For the quadratic relation, we have
	\begin{align*}
		(H^2)a &= H(Ha)
		= H(\sigma(a)\alpha + \rho(a))
		= \sigma^2(a)\sigma(\alpha)\alpha + \sigma\rho(a)\alpha + \rho(\sigma(a)\alpha) + \rho^2(a)\\
		&\stackrel{\eqref{def:wr2},~\eqref{def:qu1}}{=\joinrel=\joinrel=} \sigma^2(a)(\sigma(\alpha)\alpha + \rho(\alpha)- S\alpha - R) + S\sigma(a)\alpha + S\rho(a) + Ra \\
		&= (SH+ R)a + \sigma^2(a)(\sigma(\alpha)\alpha + \rho(\alpha)- S\alpha - R) \stackrel{\eqref{def:quad-al}}{=\joinrel=} (SH+ R)a.
	\end{align*}
	The braid relation $H_jH_i = \epsilon H_iH_j$ holds automatically:
	\begin{align*}
		(H_jH_i)a &= H_j(\sigma_i(a)\alpha_i + \rho_i(a))
		\stackrel{\eqref{def:wr2}}{=} \sigma_{ji}(a)\sigma_j(\alpha_i)\alpha_j + \rho_j\sigma_i(a)\alpha_i + \sigma_j\rho_i(a)\alpha_j+ \rho_{ji}(a)\\
		& \stackrel{\eqref{eq:sig-rho-inv}}{=} \sigma_{ij}(a)\alpha_i\alpha_j + \epsilon(\sigma_i\rho_j(a)\alpha_i + \sigma_j\rho_i\sigma_j(a)\alpha_j+\rho_{ij}(a))\\ 
		& = (\epsilon H_iH_j)a + \sigma_{ij}(a)(\alpha_i\alpha_j - \epsilon\alpha_j\alpha_i) = (\epsilon H_iH_j)a,
	\end{align*}
	Finally, for the remaining braid relation, using the relation \eqref{def:wr2} we obtain
	\begin{align*}
		(H_1&H_2H_1)a 
		= \sigma_{121}(a)(\sigma_{12}(\alpha_1)\sigma_1(\alpha_2)\alpha_1 + \sigma_1\rho_2(\alpha_1)\alpha_1 + \rho_1\sigma_2(\alpha_1)\alpha_2 + \sigma_{12}(\alpha_1)\rho_1(\alpha_2) + \rho_{12}(\alpha_1))\\
		&+ \rho_1\sigma_{21}(a)(\sigma_2(\alpha_1)\alpha_2+\rho_2(\alpha_1)) + \rho_2\sigma_{12}(a)(\sigma_1(\alpha_2)\alpha_1+\rho_1(\alpha_2))\\
		&+ (\sigma_1\rho_{21}(a) + \rho_{12}\sigma_1(a)+ \sigma_1\rho_2\sigma_1(a)\sigma_1(\alpha_1))\alpha_1 + \sigma_1\rho_2\sigma_1(a)\rho_1(\alpha_1) + \rho_1\sigma_2\rho_1(a)\alpha_2 + \rho_{121}(a),
	\end{align*}
	and an analogous expression for $H_2H_1H_2$ with $1$ and $2$ swapped.
	The first line in the two expressions coincides by the first equation of~\eqref{def:br1} and~\eqref{def:cube-al}.
	The second equation of~\eqref{def:br1} immediately implies that the second line coincides as well.
	Let us rewrite the third line:
	\begin{align*}
		(\sigma_1&\rho_{21}(a) + \rho_{12}\alpha_1(a)+ \sigma_1\rho_2\sigma_1(a)\sigma_1(\alpha_1))\alpha_1 + \sigma_1\rho_2\sigma_1(a)\rho_1(\alpha_1) + \rho_1\sigma_2\rho_1(a)\alpha_2 + \rho_{121}(a)\\
		& \stackrel{\eqref{def:br2}}{=} \sigma_1\rho_2\sigma_1(a)((\sigma_1(\alpha_1)-S_1)\alpha_1 + \rho_1(\alpha_1)) + (\rho_2\sigma_1\rho_2(a)\alpha_1+\rho_1\sigma_2\rho_1(a)\alpha_2) + \rho_{121}(a)\\
		& \stackrel{\eqref{def:quad-al}}{=} \rho_{121}(a) + \sigma_1\rho_2\sigma_1(a)R_1 + (\rho_2\sigma_1\rho_2(a)\alpha_1+\rho_1\sigma_2\rho_1(a)\alpha_2).
	\end{align*}
	Using the relation~\eqref{def:br3}, we see that this expression is invariant under swapping $1$ with $2$, and so we may conclude.
\end{proof}

\begin{rmk}
	If $\sigma = \tau$ and $\rho = 0$, conditions (\ref{def:quad-al},~\ref{def:cube-al}) assume a much simpler form:
	\[
		R = (\alpha_{21}-S)\alpha_{12}; \qquad \alpha_{j}\alpha_{i} = \epsilon\alpha_{i}\alpha_{j}, \quad j\geq i+2;\qquad \alpha_{23}\alpha_{13}\alpha_{12} = \alpha_{12}\alpha_{13}\alpha_{23}.
	\]
	Note the similarity of the first equation with the condition (C1) in~\cite{lai2025schurification}.
	We expect that as soon as $F\wr\cal{H}_n$ acts on $F^{\otimes n}$, we can construct its Schur version; see also \cref{rmk:Lai-upcoming}.
\end{rmk}

The action in \cref{prop:action} does not give rise to a map $F\wr\cal{H}_n\to \End_{\cal{C}}(F^{\otimes n})$.
On one hand, there is no natural map $A\to \End_{\cal{C}}(A)$ for a general category $\cal{C}$.
On the other hand, as $\cal{C}$ is monoidal, the exchange relation $(1\otimes f)\circ (g\otimes 1) = (g\otimes 1)\circ (1\otimes f)$ would imply $H_1H_3 = H_3H_1$, which does not hold when $\epsilon \neq 1$. 
If $\cal{C}$ is rigid, both problems can be resolved by working instead in the natural enrichment $\underline{\cal{C}}$ of $\cal{C}$ over itself, whose objects are the same as objects in $\cal{C}$, and $\Hom_{\underline{\cal{C}}}(M,N)\coloneqq M^*\otimes N$; see e.g.~\cite{MorPen19}.
Then if $F\in \cal{C}$, we have a natural map of algebras $F \to \End_{\underline{\cal{C}}}(F)=F^*\otimes F$.
The category $\underline{\cal{C}}$ is usually not monoidal, but rather has a braided interchange relation; see e.g.~\cite[§1.2]{BrundanEllis} for a discussion of this failure for super vector spaces.
This feature allows us to capture the relation $H_3H_1 = \epsilon H_1H_3$.

Assuming the relations (\ref{def:quad-al},~\ref{def:cube-al}) hold, we can introduce diagrammatic calculus for $F\wr \cal{H}_n$.
We draw copies of $F$ as solid lines, and remove all the other objects from diagrams by abuse of notation:
\[
\tikz[thick,xscale=.6,yscale=.6,font=\footnotesize,baseline=(current  bounding  box.center)]{
	\ntxt{-1.8}{0.8}{$x\in F_X\rightsquigarrow$}
	\ntxt{-2.2}{0.3}{$1\leq i\leq n$}

	\draw (0,0) -- (0,2);
	\ntxt{0.4}{1.2}{$\ldots$}
	\dmerge{1}{1.2}{2}{1.8}
	\draw [line width=3pt, white] (-0.5,0) .. controls (-0.5,0.6) and (1,0.2) .. (1,0.8);
	\draw [color=OI7] (-0.5,0) .. controls (-0.5,0.6) and (1,0.2) .. (1,0.8);
	{\color{OI7} \ntxt{-0.5}{-0.5}{$X$}}
	\draw (2,0) -- (2,1.2);
	\ntxt{2}{-0.5}{$i$}
	\opbox{0.8}{0.8}{1.2}{1.2}{$x$}
	\draw (1.5,1.8) -- (1.5,2);
	\ntxt{2.6}{1.2}{$\ldots$}
	\draw (3,0) -- (3,2);
	\ntxt{3.6}{0.8}{$\eqqcolon$}

	\begin{scope}[xshift=4.3cm]
		\draw (0,0) -- (0,2);
		\ntxt{0.75}{0.8}{$\ldots$}
		\draw (1.5,0.8) -- (1.5,0);
		\draw (1.5,1.2) -- (1.5,2);
		\ntxt{1.5}{-0.5}{$i$}
		\ntxt{2.25}{0.8}{$\ldots$}
		\opbox{1.3}{0.8}{1.7}{1.2}{$x$}
		\draw (3,0) -- (3,2);
		\ntxt{3.5}{0.8}{;}
	\end{scope}
}\qquad 
\tikz[thick,xscale=.6,yscale=.6,font=\footnotesize,baseline=(current  bounding  box.center)]{
	\begin{scope}[xshift=0cm]
		\ntxt{-1.6}{0.8}{$H_i\rightsquigarrow$}
		\draw (0,0) -- (0,2);
		\ntxt{-0.5}{-0.5}{$H$}
		\draw[line width=3pt, white] (-0.5,0) .. controls (-0.5,0.6) and (1,0.2) .. (1.1,0.7);
		\draw[dotted,line cap=round,rounded corners] (-0.5,0) .. controls (-0.5,0.6) and (1,0.2) .. (1.1,0.7);
		\ntxt{0.5}{0.8}{$\ldots$}
		\draw (1,0) -- (1.4,0.7);
		\draw (1.6,1.3) -- (2,2);
		\draw (2,0) -- (1.6,0.7);
		\draw (1.4,1.3) -- (1,2);
		\ntxt{1}{-0.5}{$i$}
		\ntxt{2}{-0.5}{$i+1$}
		\opbox{1}{0.7}{2}{1.3}{$h_i$}
		\ntxt{2.5}{0.8}{$\ldots$}
		\draw (3,0) -- (3,2);
		\ntxt{3.6}{0.8}{$\eqqcolon$}
	\end{scope}

	\begin{scope}[xshift=4.3cm]
		\draw (0,0) -- (0,2);
		\ntxt{0.5}{0.8}{$\ldots$}
		\draw (1,0) -- (2,2);
		\draw (2,0) -- (1,2);
		\ntxt{1}{-0.5}{$i$}
		\ntxt{2}{-0.5}{$i+1$}
		\ntxt{2.5}{0.8}{$\ldots$}
		\draw (3,0) -- (3,2);
		\ntxt{3.5}{0.8}{.}
	\end{scope}
}
\]
Then the defining relations of $F\wr \cal{H}_n$ can be drawn as follows:
\[
\tikz[thick,xscale=.6,yscale=.6,font=\footnotesize,baseline=(current  bounding  box.center)]{
	\begin{scope}[xshift=0cm]
		\draw (0,0) -- (0,0.3);
		\opbox{-0.2}{0.3}{0.2}{0.7}{$x$}
		\draw (0,0.7) -- (0,2);
		\ntxt{0.75}{0.9}{$\ldots$}
		\draw (1.5,0) -- (1.5,1);
		\draw (2.5,0) -- (2.5,1);
		\crosin{1.5}{1}{2.5}{2}
		\ntxt{3}{1}{$=$}
	\end{scope}

	\begin{scope}[xshift=4cm]
		\draw (0,0) -- (0,1.3);
		\opbox{-0.5}{1.3}{0.5}{1.8}{$t(x)$}
		\draw (0,1.8) -- (0,2);
		\ntxt{0.85}{0.9}{$\ldots$}
		\draw (1.5,1) -- (1.5,2);
		\draw (2.5,1) -- (2.5,2);
		\crosin{1.5}{0}{2.5}{1}
		\ntxt{2.9}{0.9}{,}
	\end{scope}
}\qquad
\tikz[thick,xscale=.6,yscale=.6,font=\footnotesize,baseline=(current  bounding  box.center)]{
	\begin{scope}[xshift=0cm]
		\crosin{0}{0}{1}{1}
		\draw (0,1) -- (0,2);
		\draw (1,1) -- (1,2);
		\ntxt{1.65}{0.9}{$\ldots$}
		\draw (2.5,0) -- (2.5,1.3);
		\opbox{2.3}{1.3}{2.7}{1.7}{$x$}
		\draw (2.5,1.7) -- (2.5,2);
		\ntxt{3}{1}{$=$}
	\end{scope}

	\begin{scope}[xshift=3.5cm]
		\crosin{0}{1}{1}{2}
		\draw (0,0) -- (0,1);
		\draw (1,0) -- (1,1);
		\ntxt{1.65}{0.9}{$\ldots$}
		\draw (2.5,0) -- (2.5,0.3);
		\opbox{2}{0.3}{3}{0.8}{$t(x)$}
		\draw (2.5,0.8) -- (2.5,2);
		\ntxt{3.2}{0.9}{,}
	\end{scope}
}\qquad
\tikz[thick,xscale=.6,yscale=.6,font=\footnotesize,baseline=(current  bounding  box.center)]{
	\begin{scope}[xshift=0cm]
		\draw (0,0) -- (0,0.3);
		\draw (1,0) -- (1,0.3);
		\opbox{-0.2}{0.3}{1.2}{0.7}{$x$}
		\draw (0,0.7) -- (0,1);
		\draw (1,0.7) -- (1,1);
		\crosin{0}{1}{1}{2}
		\ntxt{1.6}{1}{$=$}
	\end{scope}

	\begin{scope}[xshift=2.2cm]
		\crosin{0}{0}{1}{1}
		\draw (0,1) -- (0,1.3);
		\draw (1,1) -- (1,1.3);
		\opbox{-0.2}{1.3}{1.2}{1.8}{$\sigma(x)$}
		\draw (0,1.8) -- (0,2);
		\draw (1,1.8) -- (1,2);	
		\ntxt{1.5}{1}{$+$}
		\draw (2,0) -- (2,0.8);
		\draw (3,0) -- (3,0.8);
		\opbox{1.8}{0.8}{3.2}{1.3}{$\rho(x)$}
		\draw (2,1.3) -- (2,2);
		\draw (3,1.3) -- (3,2);
		\ntxt{3.4}{0.9}{,}
	\end{scope}
}
\]

\[
	\tikz[thick,xscale=.6,yscale=.6,font=\footnotesize,baseline=(current  bounding  box.center)]{
	\crosin{0}{0}{1}{1}
	\crosin{0}{1}{1}{2}
	\ntxt{1.5}{1}{$=$}
	\crosin{2}{0}{3}{1}
	\draw (2,1) -- (2,1.3);
	\draw (3,1) -- (3,1.3);
	\opbox{1.8}{1.3}{3.2}{1.8}{$S$}
	\draw (2,1.8) -- (2,2);
	\draw (3,1.8) -- (3,2);
	\ntxt{3.5}{1}{$+$}	
	\draw (4,0) -- (4,0.8);
	\draw (5,0) -- (5,0.8);
	\opbox{3.8}{0.8}{5.2}{1.3}{$R$}
	\draw (4,1.3) -- (4,2);
	\draw (5,1.3) -- (5,2);
	\ntxt{5.4}{0.9}{,}
}\qquad
\tikz[thick,xscale=.6,yscale=.6,font=\footnotesize,baseline=(current  bounding  box.center)]{
	\begin{scope}[xshift=0cm]
		\crosin{0}{0}{1}{1}
		\draw (0,1) -- (0,2);
		\draw (1,1) -- (1,2);
		\ntxt{1.75}{0.9}{$\ldots$}
		\crosin{2.5}{1}{3.5}{2}
		\draw (2.5,0) -- (2.5,1);
		\draw (3.5,0) -- (3.5,1);
		\ntxt{4}{1}{$=$}
	\end{scope}

	\begin{scope}[xshift=5cm]
		\ntxt{-0.5}{1}{$\epsilon$}
		\crosin{0}{1}{1}{2}
		\draw (0,0) -- (0,1);
		\draw (1,0) -- (1,1);
		\ntxt{1.75}{0.9}{$\ldots$}
		\crosin{2.5}{0}{3.5}{1}
		\draw (2.5,1) -- (2.5,2);
		\draw (3.5,1) -- (3.5,2);
		\ntxt{3.8}{0.9}{,}
	\end{scope}
}\qquad
\tikz[thick,xscale=.6,yscale=.6,font=\footnotesize,baseline=(current  bounding  box.center)]{
	\draw (0,0) -- (2,2);
	\draw (2,0) -- (0,2);
	\draw (1,0) .. controls (2.2,1) .. (1,2);
	\ntxt{2.5}{1}{$=$}
	\draw (3,0) -- (5,2);
	\draw (5,0) -- (3,2);
	\draw (4,0) .. controls (2.8,1) .. (4,2);
	\ntxt{5.3}{1}{.}
}
\]
Note the presence of the map $t = \tau_{H,F}$ in the first two relations; it occurs because of our definition of $\sigma_i$ in \cref{subs:qwp}.

\subsection{Deforming cubic relation}\label{ssec:def-cubic}
It is important for us to consider a slight generalization of QWP algebras, where we allow the degree $3$ braid relation to deform.
Let us pick additional elements $C,\overline{C}\in (F^{\otimes 3})_{H}$, $D,\overline{D}\in (F^{\otimes 3})_{H^2}$, $E\in (F^{\otimes 3})_{H^3}$.
As before, this gives rise to the elements
\[
	C_i,\overline{C}_i\in A_{H}, \quad D_i,\overline{D}_i\in A_{H^2}, \quad E_i\in A_{H^3},\qquad 1\leq i\leq n-2,
\]
where e.g. $E_i = \iota_{i,i+1,i+2}(E)$, and so on.

\begin{defn}
	Let $H\in\cal{C}$ be invertible, $F\in\cal{C}$ an algebra, $\sigma: F^{\otimes 2}\to F^{\otimes 2}$ an algebra endomorphism, $\rho: H\otimes F^{\otimes 2}\to F^{\otimes 2}$ a left $\sigma$-derivation.
	Denote $\qbar\coloneqq (\sigma,\rho,S,R,C,\overline{C},D,\overline{D},E)$.
	The \textit{deformed quantum wreath product} $F\wr\cal{H}_n = F\wr \cal{H}_n(\qbar)$ associated to $(F,\qbar)$ is the quotient of $F^{\otimes n}\otimes T(\Vn)$ by wreath, quadratic, and degree $2$ braid relation, as well as the following degree $3$ braid relation:
	\begin{align}
	H_{i+1} H_i H_{i+1} = H_i H_{i+1} H_i + C_i H_i H_{i+1} + \overline{C}_i H_{i+1} H_i + D_i H_i + \overline{D}_i H_{i+1} + E_i,  \quad 1\leq i\leq n-2.\tag{braid'}
\end{align}
\end{defn}

As in \cref{ssec:loc-alg}, we can also replace $A = F^{\otimes n}$ by its localization, and allow the new deformation parameters to live there.
The notion of having a PBW basis extends to deformed QWPs in an obvious way.
The necessary conditions~(\ref{def:br1}--\ref{def:br5}) will deform accordingly, but we do not need their explicit form.
However, even after taking this deformation into account, the analogue of \cref{thm:QWP-basis} is not true in general.

For the deformed QWPs, one can easily prove the analogue of \cref{prop:action}.
We will only need it in the case $C = \overline{C} = 0$, where the relation~\eqref{def:cube-al} needs to be replaced with
\begin{equation}
	\label{def:cube-al-def}\tag{R2'}
	\begin{aligned}
			(\sigma_{12}(\alpha_1)&\sigma_2(\alpha_1) +\sigma_1\rho_2(\alpha_1)+D)\alpha_1 + \rho_1(\sigma_2(\alpha_1)\alpha_2) + \rho_{12}(\alpha_1) + E\\
			&= (\sigma_{21}(\alpha_2)\sigma_1(\alpha_2)+\sigma_2\rho_1(\alpha_2)-\overline{D})\alpha_2 + \rho_2(\sigma_1(\alpha_2)\alpha_1) + \rho_{21}(\alpha_2).
	\end{aligned}
\end{equation}
Similarly, in the diagrammatics the last relation gets replaced with
\[
\tikz[thick,xscale=.6,yscale=.6,font=\footnotesize,baseline=(current  bounding  box.center)]{
	\draw (0,0) -- (2,2);
	\draw (2,0) -- (0,2);
	\draw (1,0) .. controls (2.2,1) .. (1,2);
	\ntxt{2.5}{1}{$=$}
	\draw (3,0) -- (5,2);
	\draw (5,0) -- (3,2);
	\draw (4,0) .. controls (2.8,1) .. (4,2);
	\ntxt{5.5}{1}{$+$}
	\crosin{6}{0}{7}{1}
	\draw (6,1) -- (6,1.2);
	\draw (7,1) -- (7,1.2);
	\draw (8,0) -- (8,1.2);
	\opbox{5.8}{1.2}{8.2}{1.8}{$D$}
	\draw (6,1.8) -- (6,2);
	\draw (7,1.8) -- (7,2);
	\draw (8,1.8) -- (8,2);
	\ntxt{8.5}{1}{$+$}
	\crosin{10}{0}{11}{1}
	\draw (9,0) -- (9,1.2);
	\draw (10,1) -- (10,1.2);
	\draw (11,1) -- (11,1.2);
	\opbox{8.8}{1.2}{11.2}{1.8}{$\overline{D}$}
	\draw (9,1.8) -- (9,2);
	\draw (10,1.8) -- (10,2);
	\draw (11,1.8) -- (11,2);
	\ntxt{11.5}{1}{$+$}
	\draw (12,0) -- (12,0.7);
	\draw (13,0) -- (13,0.7);
	\draw (14,0) -- (14,0.7);
	\opbox{11.8}{0.7}{14.2}{1.3}{$E$}
	\draw (12,1.3) -- (12,2);
	\draw (13,1.3) -- (13,2);
	\draw (14,1.3) -- (14,2);
	\ntxt{14.5}{1}{.}
}
\]

\medskip
\section{PBW bases versus Yang--Baxter equations}\label{sec:computations}
Let us remind the reader that $H$ is invertible.
We continue to use Yoneda-style vocabulary from \cref{ssec:invertible}.
For the ease of reference, let us separate the conditions \eqref{def:br1} and \eqref{def:br4} into two:
\begin{center}
\begin{minipage}{0.5\linewidth}
	\begin{align}
	&\sigma_1\sigma_2\sigma_1 = \sigma_2 \sigma_1 \sigma_2,                    \label{def:br11}\tag{P5.1}    \\
	&S_i = \sigma_j\sigma_i(S_j), \quad R_i = \sigma_j\sigma_i(R_j),    \label{def:br41}\tag{P8.1}
	\end{align}
\end{minipage}\begin{minipage}{0.5\linewidth}
    \begin{align}
    &\sigma_i \sigma_j \rho_i = \rho_j\sigma_i\sigma_j, \label{def:br12}\tag{P5.2}    \\
    &\rho_j\sigma_i(S_j) = 0 = \rho_j\sigma_i(R_j).         \label{def:br42}\tag{P8.2}
    \end{align}
    \end{minipage}
\end{center}

\subsection{Inner skew-derivations}
Let us restrict our attention to a broad class of automorphisms $\sigma$ and $\sigma$-derivations $\rho$ which is much easier to analyse.

\begin{defn}\label{def:tn-braiding}
	Let $\phi,\psi:F\to F$ be two commuting algebra automorphisms, and let $\tau = \tau_{F,F}$ be the braiding in $\cal{C}$.
	We call $(\phi\otimes \psi)\circ \tau: F^{\otimes 2}\to F^{\otimes 2}$ a \textit{twisted natural} braiding.
\end{defn}

Note that any twisted natural braiding $\sigma\coloneqq (\phi\otimes \psi)\circ \tau$ satisfies braid relations.
Indeed, using the naturality of $\tau$ we have:
\begin{align*}
	\sigma_1 \sigma_2 \sigma_1 & = (\phi\otimes \psi\otimes \id)\circ \tau_1\circ (\id\otimes\phi\otimes \psi)\circ \tau_2\circ (\phi\otimes \psi\otimes \id)\circ \tau_1\\
	& = (\phi^2\otimes \psi\phi\otimes \psi^2) \circ \tau_1\tau_2\tau_1 = (\phi^2\otimes \phi\psi\otimes \psi^2) \circ \tau_2\tau_1\tau_2
	= \sigma_2 \sigma_1 \sigma_2.
\end{align*}
Moreover, when $\cal{C}$ is symmetric monoidal, \cref{ex:symm-braiding} tells us that this is a braiding by algebra automorphisms.

\begin{defn}
	Let $A$ be an algebra, $\sigma: A\to A$ an algebra automorphism.
	We call the map
	\[
	\rho_r: H\otimes A\to A,\qquad \rho_r(a) = ra-\sigma(a)r
	\]
	the ($H$-twisted) \textit{inner $\sigma$-derivation} associated to $r\in A_H$.
	We say that $a\in A$ is \textit{$\sigma$-central} if $\rho_a\equiv 0$.
\end{defn}

In many cases of interest, being is $\sigma$-central is an extremely restrictive condition.

\begin{lem}\label{lem:no-central}
	Let $\cal{C}$ be a symmetric monoidal category, $\sigma$ a twisted natural braiding, and $\mathfrak{Br}_n\to \End(F^{\otimes n})$, $w = s_{i_1}\ldots s_{i_l}\mapsto \sigma_w\coloneqq\sigma_{i_1}\ldots \sigma_{i_l}$ the natural homomorphism.
	Assume that there exists a central ($\bbk$-)element $c\in F$, satisfying
	\[
		\phi(c) = \psi(c) = c, \qquad c_1\pm c_2\in F\otimes F \text{ are not zero divisors}.
	\]
	Then for any $w\in \mathfrak{S}_n\setminus \{\id\}$, there are no non-zero $\sigma_w$-central elements in $F^{\otimes n}$.
\end{lem}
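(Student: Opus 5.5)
The plan is to test the $\sigma_w$-centrality condition against the distinguished central element $c$, placed in a factor that $w$ moves. Suppose $a\in (F^{\otimes n})_X$ is $\sigma_w$-central, i.e. $a x = \sigma_w(x)a$ for all $x\in F^{\otimes n}$. Since $w\neq \id$, there is an index $k$ with $l\coloneqq w(k)\neq k$. (If $\sigma_w$ permutes factors via $w^{-1}$ under our conventions, I would simply replace $w$ by $w^{-1}$; the argument is unchanged.)

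First I compute $\sigma_w$ on the $\bbk$-element $c_k=\iota_{\{k\}}(c)$. Each $\sigma_i=(\id\otimes\phi\otimes\psi\otimes\id)\circ\tau_i$ acts on a single-factor element $y_j$ as follows.
\begin{itemize}
\item It sends $y_i$ to $\psi(y)_{i+1}$.
\item It sends $y_{i+1}$ to $\phi(y)_i$.
\item It fixes $y_j$ for $j\neq i,i+1$.
\end{itemize}
This uses naturality of $\tau$ and the fact that the unit is fixed by $\phi,\psi$. Since $\phi(c)=\psi(c)=c$, induction on the length of $w$ gives $\sigma_w(c_k)=c_{w(k)}=c_l$. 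Because $\cal{C}$ is symmetric, no signs or braidings of $c$ with $X$ intervene: $c$ is a $\bbk$-element, hence even.

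Plugging $x=c_k$ into the centrality condition gives $a c_k = c_l a$. Next I use that $c$ is central in $F$, and that in a symmetric category $\iota_{\{k\}}(c)$ is then central in $F^{\otimes n}$. Together these give
\[
	(c_k-c_l)\,a = 0 .
\]

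It remains to see that $c_k-c_l$ is not a zero divisor in $F^{\otimes n}$. Up to the sign $\pm$ (according to whether $k<l$), and after conjugating by the permutation of tensor factors bringing positions $k,l$ to $1,2$, this element is $(c_1-c_2)\otimes 1^{\otimes(n-2)}$. This is exactly why the hypothesis is stated for both signs $c_1\pm c_2$. The permutation is an algebra automorphism of $F^{\otimes n}$ because $\cal{C}$ is symmetric (cf. \cref{ex:symm-braiding}).

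I expect the main obstacle to be this non-zero-divisor step, in two respects. One must interpret ``not a zero divisor'' as the statement that left multiplication $l_{c_1-c_2}$ is a monomorphism $F^{\otimes 2}\to F^{\otimes 2}$. The delicate point is then to pass this to $F^{\otimes 2}\otimes F^{\otimes(n-2)}$. Here I would invoke exactness of $-\otimes F^{\otimes(n-2)}$: this holds in a rigid abelian monoidal category, and over a field in the $\Vec$, $\sVec$ cases. For the ind-completion, I would use that filtered colimits are exact. Granting this, $l_{c_k-c_l}$ is a monomorphism of $F^{\otimes n}$, so by Yoneda $(c_k-c_l)a=0$ forces $a=0$. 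This proves that there are no non-zero $\sigma_w$-central elements.
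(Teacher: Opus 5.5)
Your proposal is correct and follows the paper's proof: evaluate $\sigma_w$-centrality on $c$ placed in a tensor slot, get $(c_k-c_{w(k)})a=0$, and cancel because $c_1\pm c_2$ is not a zero divisor. Your choice of an index actually moved by $w$, and your justification for passing the non-zero-divisor property from $F^{\otimes 2}$ to $F^{\otimes n}$, are more careful than the paper, which uses the index $1$ even though $w$ may fix it and asserts the $F^{\otimes n}$ statement without comment. One small correction: the paper assumes both signs to absorb a factor $\epsilon^{\ell(w)}$ it writes in $\sigma_w(c_1)$, not an ordering sign, since $-(c_1-c_2)$ never requires $c_1+c_2$; your argument therefore only uses $c_1-c_2$.
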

\begin{proof}
	Assume $x\in F^{\otimes n}$ is $\sigma_w$-central.
	Then
	\[
		0 = xc_1 - \sigma_w(c_1)x = (c_1-\epsilon^{\ell(w)}c_{w(1)})x.
	\]
	Since $(c_1\pm c_2)$ is not a zero divisor, the same holds for $(c_1-\epsilon^{\ell(w)}c_{w(1)})$, and so $x$ must vanish.
\end{proof}

In particular, if $\bbk[x]\subset F$ is a central subalgebra such that $F$ is a free $\bbk[x]$-module, the lemma above applies for $c = x$.

\begin{defn}\label{defn:TIC}
	We say that the deformed QWP algebra $F\wr \cH_n$ is a \textit{TIC algebra}, if the following assumptions hold:
	\begin{enumerate}
		\item $\cal{C}$ is symmetric monoidal;
		\item $\sigma = (\phi\otimes \psi)\circ \tau$ is a twisted natural braiding;
		\item $\rho = \rho_r$ is an inner $\sigma$-derivation for $r\in (F^{\otimes 2})_H$, such that
		\begin{equation}\label{eq:r-psi-condition}
		(\phi\otimes \phi)(r) = (\psi\otimes \psi)(r) = r;
		\end{equation}
		\item There exists $c\in F$ satisfying the conditions of \cref{lem:no-central}.
\end{enumerate}
\end{defn}

From now on, \textit{we work exclusively with TIC algebras}, unless otherwise stated.
As explained above, they automatically satisfy the conditions \eqref{def:wr1}, \eqref{def:wr2}, \eqref{def:br11}.
Furthermore, the condition on $r$ implies that $\sigma_{12}(r_1) = r_2$, $\sigma_{21}(r_2) = r_1$:
\begin{align*}
	\sigma_{12}(r_1) &= (\phi^2\otimes \psi\otimes \psi)\circ \tau_{12}(r\otimes 1) = \phi^2(1)\otimes (\psi\otimes \psi)(r) = r_2;\\ 
	\sigma_{21}(r_2) &= (\phi\otimes \phi\otimes \psi^2)\circ \tau_{21}(1\otimes r) = (\phi\otimes \phi)(r)\otimes\psi^2(1) = r_1.
\end{align*}

\subsection{The case $\rho = 0$}
In order to apply \cref{thm:QWP-basis} to TIC algebras, let us first restrict our attention to the simplest case $\rho = 0$.

\begin{prop}\label{prop:rho-zero-nondef}
	For any TIC algebra with $\rho = 0$ which has a PBW basis, we have $C = \overline{C} = D = \overline{D} = E = 0$.
\end{prop}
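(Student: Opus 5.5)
The plan is to exploit that, when $\rho=0$, every $H_w$ simply twists elements of $A=F^{\otimes n}$ by an automorphism, while the PBW basis lets us compare coefficients. With $\rho=0$ the wreath relation reads $H_i a=\sigma_i(a)H_i$. Since $\sigma=(\phi\otimes\psi)\circ\tau$ with $\phi,\psi$ automorphisms, each $\sigma_i$ is an automorphism of $A$. Hence $H_{i_1}\cdots H_{i_l}\,a=\sigma_{i_1}\cdots\sigma_{i_l}(a)\,H_{i_1}\cdots H_{i_l}$, up to the scalar/braiding twist $\tau_{H,X}$ coming from the symmetric structure. To keep this twist harmless, I will only use the $\bbk$-element $c$ from \cref{defn:TIC}(4). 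Its coordinate copies satisfy $\sigma_w(c_k)=\pm c_{w(k)}$, exactly as in the proof of \cref{lem:no-central}.

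Fix $i$ (say $i=1$) and multiply the relation (braid') on the right by $c_k$, for some $1\le k\le n$ to be chosen. Move $c_k$ to the left in every term. The left side gives $\sigma_{212}(c_k)H_2H_1H_2$. On the right side, $H_1H_2H_1c_k=\sigma_{121}(c_k)H_1H_2H_1$, and by \eqref{def:br11} we have $\sigma_{121}=\sigma_{212}$. Next subtract $\sigma_{212}(c_k)$ times the original relation (braid'). The cubic terms then cancel, leaving
\[
C_1\bigl(\sigma_{12}(c_k)-\sigma_{212}(c_k)\bigr)H_1H_2+\overline{C}_1\bigl(\sigma_{21}(c_k)-\sigma_{212}(c_k)\bigr)H_2H_1+D_1(\ldots)H_1+\overline{D}_1(\ldots)H_2+E_1\bigl(c_k-\sigma_{212}(c_k)\bigr)=0 .
\]
Here I have used that $c$ is central in $F$, so that $c_k$ commutes with all coefficients. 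Each bracket has the form $\pm c_{w'(k)}\mp c_{w(k)}$, where $w=s_1s_2s_1$ and $w'\in\{s_1s_2,\ s_2s_1,\ s_1,\ s_2,\ \id\}$; in every case $w'\neq w$.

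Since $F\wr\cH_n$ has a PBW basis, the elements $H_1H_2,H_2H_1,H_1,H_2,1$ are the basis monomials of the distinct permutations $s_1s_2,s_2s_1,s_1,s_2,\id$. These are all of length $\le 2$, so their reduced expressions are unique and they do not interact with the choice of $r(s_1s_2s_1)$. Hence each coefficient vanishes separately. For example, $E_1(c_k\mp c_{w(k)})=0$. For each coefficient we choose $k$ with $w'(k)\neq w(k)$, which exists because $w'\neq w$. Then $c_{w'(k)}\pm c_{w(k)}$ is an image of $c_1\pm c_2$ under a permutation of tensor factors, so it is not a zero divisor. It follows that $C_1=\overline{C}_1=D_1=\overline{D}_1=E_1=0$. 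Since $C_1=\iota_{\{1,2,3\}}(C)$ and $\iota$ is injective (split by counits/unit), we get $C=\overline{C}=D=\overline{D}=E=0$.

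The main obstacle is bookkeeping rather than ideas. First, one must verify that the scalars from $\tau_{H,F}$ and $\epsilon$ only contribute signs $\pm$, which the non-zero-divisor hypothesis on both $c_1+c_2$ and $c_1-c_2$ absorbs. Second, one must justify coefficient comparison in the Yoneda/$H$-twisted setting, where the coefficients are $H^k$-elements of $A$. For the latter I would note that $\varphi_n$ restricted to each summand $A\otimes H_w$ is injective, and that right multiplication by the even $\bbk$-element $c_k$ acts componentwise on $\mathbb{V}$.
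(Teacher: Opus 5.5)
Your proposal is correct and is essentially the paper's proof. The paper compares $H_2(H_1(H_2x))$ with $H_{212}x$ in the PBW basis to get that $C,\overline{C},D,\overline{D},E$ are $\sigma_w$-central for some $w\neq\id$, then applies \cref{lem:no-central}; this amounts to exactly your specialization $x=c_k$. Your choice of $k$ with $w'(k)\neq w(k)$ is slightly more careful than the proof of \cref{lem:no-central}, which tests only against $c_1$ and so gives nothing when $w$ fixes $1$ (e.g.\ the $\sigma_2$-centrality of $C$).
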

\begin{proof}
	Consider the necessary condition $H_2(H_1(H_2x)) = H_{212}x$ for $x\in F^{\otimes 3}$:
	\begin{align*}
		H_2(H_1&(H_2x)) = \sigma_{212}(x)H_{212}\\
		 &= \sigma_{121}(x)H_{121} + \sigma_{121}(x)C H_{12} + \sigma_{121}(x)\overline{C} H_{21} + \sigma_{121}(x)DH_1 + \sigma_{121}(x)\overline{D} H_2 + \sigma_{121}(x)E,\\
		H_{212}x &= H_{121}x + C H_{12}x + \overline{C} H_{21}x + DH_1x + \overline{D} H_2x + Ex\\
		 &= \sigma_{121}(x)H_{121} + C \sigma_{12}(x)H_{12} + \overline{C} \sigma_{21}(x)H_{21} + D\sigma_{1}(x)H_1 + \overline{D} \sigma_{2}(x)H_2 + Ex.
	\end{align*} 
	As $\sigma$ is invertible, term-by-term comparison yields that $C$ is $\sigma_2$-central, $\overline{C}$ is $\sigma_1$-central, $D$ is $\sigma_{12}$-central, $\overline{D}$ is $\sigma_{21}$-central, and $E$ is $\sigma_{121}$-central.
	We conclude by \cref{lem:no-central}. 
\end{proof}

\begin{cor}\label{cor:PBW-for-rho-nil}
	A TIC algebra $F\wr \cal{H}_n$ with $\rho = 0$ has PBW basis if and only if: 
	\begin{gather*}
		S = C = \overline{C} = D = \overline{D} = E = 0,\quad R \text{ is $\sigma^2$-central},\\
		R = \sigma(R),\quad R_2 = \sigma_{12}(R_1), \quad R_1 = \sigma_{21}(R_2).
	\end{gather*}
\end{cor}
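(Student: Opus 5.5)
By \cref{prop:rho-zero-nondef}, any TIC algebra with $\rho=0$ and a PBW basis has $C=\overline{C}=D=\overline{D}=E=0$. So in both directions we are reduced to an ordinary (undeformed) QWP, and \cref{prop:LNX331} applies: the PBW property is equivalent to (\ref{def:wr1}--\ref{def:br5}). The plan is to specialize each of these conditions to $\rho=0$ and to a twisted natural braiding $\sigma$, and read off the stated list.

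Conditions \eqref{def:wr1}, \eqref{def:wr2} and \eqref{def:br11} hold automatically for TIC algebras. \eqref{def:br12}, \eqref{def:br2}, \eqref{def:br3}, \eqref{def:br42} and \eqref{def:br5} are trivial once $\rho=0$, except for the leftover term $r_{S_j}\sigma_j\rho_i\sigma_j$ in \eqref{def:br2}, which also vanishes because it contains $\rho_i$.

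The conditions that remain are \eqref{def:TTT}, \eqref{def:qu1} and \eqref{def:br41}.
\begin{itemize}
\item The first equation of \eqref{def:qu1} reads $l_S\sigma=r_S\sigma^2$, i.e.\ $S\sigma(a)=\sigma^2(a)S$ for all $a$. Since $\sigma$ is invertible, substituting $a\mapsto\sigma^{-1}(a)$ shows $S$ is $\sigma$-central. By \cref{lem:no-central} with $w=s_1$, this forces $S=0$.
\item The second equation of \eqref{def:qu1} becomes $l_R=r_R\sigma^2$, i.e.\ $R$ is $\sigma^2$-central.
\item With $S=0$, \eqref{def:TTT} reduces to $0=0$ and $R=\sigma(R)$.
\item \eqref{def:br41} becomes $R_i=\sigma_j\sigma_i(R_j)$ for $\{i,j\}=\{1,2\}$, which is exactly $R_2=\sigma_{12}(R_1)$ and $R_1=\sigma_{21}(R_2)$. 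The $S$-part of \eqref{def:br41} is vacuous since $S=0$.
\end{itemize}

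Conversely, if all the listed conditions hold, then the algebra is an undeformed QWP and every condition in (\ref{def:wr1}--\ref{def:br5}) is satisfied by the checks above, so \cref{prop:LNX331} gives the PBW basis.

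The only delicate step is deducing $S=0$. One has to handle the $H$-twist in the Yoneda formalism, since $S\in A_H$ and the braiding with $H$ enters the commutation $S\sigma(a)=\sigma^2(a)S$. One must then confirm that \cref{lem:no-central} applies to $S\in (F^{\otimes2})_H$ for the permutation $s_1$, which has length one and involves the sign $\epsilon$. The zero-divisor hypothesis on $c_1\pm c_2$ covers both signs, so this should go through; the remaining checks are bookkeeping.
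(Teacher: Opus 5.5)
Your proposal is correct and follows the paper's proof essentially step for step. You invoke \cref{prop:rho-zero-nondef} to reduce to an undeformed QWP, apply the basis criterion (\cref{thm:QWP-basis}, in its algebraic form \cref{prop:LNX331}), note that every $\rho$-containing condition is vacuous, and then read off the remaining conditions: $\sigma$-centrality of $S$ (hence $S=0$ by \cref{lem:no-central}), $\sigma^2$-centrality of $R$, $R=\sigma(R)$ from \eqref{def:TTT}, and the two relations from \eqref{def:br41}. The extra care you flag about the $H$-twist and the sign $\epsilon$ is exactly what the hypothesis on $c_1\pm c_2$ in \cref{lem:no-central} covers, and the paper simply cites that lemma at this point.
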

\begin{proof}
	Thanks to \cref{prop:rho-zero-nondef}, we can apply \cref{thm:QWP-basis}.
	Conditions \eqref{def:wr1}, \eqref{def:wr2}, \eqref{def:br11} are automatically satisfied for any TIC algebra.
	Conditions \eqref{def:br12}, \eqref{def:br2}, \eqref{def:br3}, \eqref{def:br42}, \eqref{def:br5} are vacuous when $\rho = 0$.
	Condition \eqref{def:qu1} is equivalent to the requirement that $S$ is $\sigma$-central and $R$ is $\sigma^2$-central. 
	By \cref{lem:no-central}, this implies that $S=0$.
	The condition \eqref{def:TTT} becomes $R = \sigma(R)$.
	Finally, the condition \eqref{def:br41} becomes $R_2 = \sigma_{12}(R_1)$, $R_1 = \sigma_{21}(R_2)$.
\end{proof}

\subsection{Renormalizing the generators}
Let us now consider a TIC algebra $F\wr \cal{H}_n$ with $\rho = \rho_r$, $r\in (F^{\otimes 2})_H$.
Consider the ($H$-)elements $\widetilde{H}_i \coloneqq H_i-r_i\in F\wr \cal{H}_n$.
Note that the wreath relation can be rewritten as follows:
\begin{align*}
	H_ia = \sigma_i(a)H_i + \rho_i(a) \quad\Leftrightarrow\quad (H_i-r_i)a = \sigma_i(a)(H_i-r_i)
	\quad\Leftrightarrow\quad \widetilde{H}_i a = \sigma_i(a)\widetilde{H}_i.
\end{align*}
Let us introduce the following elements:
\begin{align*}
	Y(r) & \coloneqq r_1r_2 + \sigma_1(r_2r_1) - r_2\sigma_1(r_2)\in (F^{\otimes 3})_{H^2},\\
	\overline{Y}(r) & \coloneqq r_2r_1 + \sigma_2(r_1r_2) - r_1\sigma_2(r_1) \in (F^{\otimes 3})_{H^2},\\
	B(r) &\coloneqq r_1\sigma_2(r_1)r_2 - r_2\sigma_1(r_2)r_1 + \sigma_2(r_1)R_2 - \sigma_1(r_2)R_1\in (F^{\otimes 3})_{H^3}.
\end{align*}
A direct computation yields the following analogues of quadratic and braid relations between $\widetilde{H}_i$'s:
\begin{gather*}
	\widetilde{H}_i^2 = S_i'\widetilde{H}_i + R_i',\qquad \widetilde{H}_i\widetilde{H}_j = \epsilon\widetilde{H}_j\widetilde{H}_i;\\
	\widetilde{H}_{i+1}\widetilde{H}_i\widetilde{H}_{i+1}
	= \widetilde{H}_i \widetilde{H}_{i+1} \widetilde{H}_i + C_i \widetilde{H}_i \widetilde{H}_{i+1} + \overline{C}_i \widetilde{H}_{i+1} \widetilde{H}_i + D'_i \widetilde{H}_i + \overline{D}' \widetilde{H}_{i+1} + E',
\end{gather*}
where the coefficients are given as follows:
\begin{align*}
	S' &\coloneqq S-r-\sigma(r), \qquad R' \coloneqq R+Sr-r^2,\\
	D' &\coloneqq D + Y(r) + \sigma_1(r_2)S'_1 + C\sigma_1(r_2) + \overline{C}r_2,\\
	\overline{D}' &\coloneqq \overline{D} - \overline{Y}(r) - \sigma_2(r_1)S'_2 + Cr_1 + \overline{C}\sigma_2(r_1),\\
	E' &= E - B(r) + Y(r)r_1 - \overline{Y}(r)r_2 + (\sigma_1(r_2)S_1' + \overline{C}r_2)r_1 - (\sigma_2(r_1)S_2' + Cr_1)r_1.
\end{align*}
Note that the first two coefficients in the cubic relation remain the same because of the equations $r_i = \sigma_{ji}(r_j)$, $\{i,j\} = \{1,2\}$, which hold in any TIC algebra.

\begin{lem}\label{prop:TIC-renorm}
	Let $\qbar = (\sigma,\rho_r,S,R,C,\overline{C},D,\overline{D},E)$ be the deformation datum of a TIC algebra, and write $\qbar' = (\sigma,0,S',R',C,\overline{C},D',\overline{D}',E')$ in the notation above.
	The assignment $H_i\mapsto \widetilde{H}_i + r_i$, $a\mapsto a$, $a\in A = F^{\otimes n}$ defines an isomorphism of algebras $F\wr \cal{H}_n(\qbar)\simeq F\wr \cal{H}_n(\qbar')$.
	In particular, $F\wr \cal{H}_n(\qbar)$ has a PBW basis if and only if $F\wr \cal{H}_n(\qbar')$ does.
\end{lem}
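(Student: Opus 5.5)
The plan is to build mutually inverse algebra maps between the two presented algebras. Both $F\wr\cal{H}_n(\qbar)$ and $F\wr\cal{H}_n(\qbar')$ are quotients of $A\otimes T(\Vn)$, where $A=F^{\otimes n}$. By the universal property of the tensor algebra, and of the quotient in \cref{ex:construct-algs}, an algebra map out of such a quotient is determined by two pieces of data: an algebra map from $A$, and an $H$-element for each $H_i$. These must satisfy the wreath relation and the relations generating $\mathbf{I}_n$, deformed as in (braid').

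\emph{Forward map.} Define $\Phi\colon F\wr\cal{H}_n(\qbar)\to F\wr\cal{H}_n(\qbar')$ by $a\mapsto a$ and $H_i\mapsto H'_i+r_i$, where $H'_i$ denotes the generators of the target. I will check the relations in order.

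\begin{enumerate}
\item \textbf{Wreath.} Since $\rho'=0$, we have $(H'_i+r_i)a=\sigma_i(a)H'_i+r_ia=\sigma_i(a)(H'_i+r_i)+\rho_{r,i}(a)$, which is exactly the wreath relation for $\qbar$.
\item \textbf{Quadratic.} Expand $(H'_i+r_i)^2$ using the wreath relation to move $H'_i$ past $r_i$, then use $H_i'^2=S'_iH'_i+R'_i$. Comparing with $S_i(H'_i+r_i)+R_i$ reproduces exactly the formulas $S'=S-r-\sigma(r)$ and $R'=R+Sr-r^2$. Here I must keep track of the scalar $\epsilon$ coming from the braiding $\tau_{H,H}$; this is handled consistently by the Yoneda conventions of \cref{ssec:invertible}.
\item \textbf{Far commutation.} For $|i-j|\ge2$, the elements $r_i$ and $r_j$ live in disjoint tensor factors, and $\sigma_i$ fixes $r_j$. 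Hence $(H'_j+r_j)(H'_i+r_i)=\epsilon(H'_i+r_i)(H'_j+r_j)$, again using symmetry of $\cal{C}$ for the sign.
\item \textbf{Cubic.} Expand both sides of (braid') with $H_i\mapsto H'_i+r_i$ and normal-order everything in the form $a\cdot H'_w$. The cubic monomials match via the primed cubic relation. The coefficients of $H'_iH'_{i+1}$ and $H'_{i+1}H'_i$ match precisely because of $\sigma_{12}(r_1)=r_2$ and $\sigma_{21}(r_2)=r_1$, which were derived from \eqref{eq:r-psi-condition}. The remaining coefficients of $H'_1$, $H'_2$ and $1$ are matched by the definitions of $D'$, $\overline{D}'$, $E'$, $Y(r)$, $\overline{Y}(r)$ and $B(r)$.
\end{enumerate}

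\emph{Inverse map.} Define $\Psi$ by $H'_i\mapsto H_i-r_i$. The same computations, read backwards, show that $\Psi$ respects the relations of $\qbar'$; indeed this is the direct computation displayed just before the lemma. The composites $\Phi\Psi$ and $\Psi\Phi$ fix the generators, so both are the identity.

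\emph{PBW claim.} $\Phi$ is an isomorphism, and I will argue it is unitriangular with respect to the length filtration. By induction on length, $\Phi(aH_w)=aH'_w+(\text{terms }bH'_v\text{ with }\ell(v)<\ell(w))$. Hence $\Phi\circ\varphi_n$ and $\varphi'_n$ differ by a unitriangular automorphism of $\mathbb{V}$, so one is injective if and only if the other is.

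\emph{Main obstacle.} The only real difficulty is the cubic relation bookkeeping: verifying that the lower-order coefficients are exactly $D'$, $\overline{D}'$, $E'$ (note $E'$ is stated with possibly mistyped indices). I would organize this calculation by collecting terms according to how many factors $r$ appear, zero through three.
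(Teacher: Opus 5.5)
Your proposal is correct and follows the paper's proof. You check that $H_i\mapsto H_i'+r_i$ respects the wreath, quadratic and braid relations (the paper's ``direct computation''), invert it by $H_i'\mapsto H_i-r_i$, and get the PBW equivalence from unitriangularity in length, which you phrase a bit more cleanly as $\Phi\circ\varphi_n=\varphi_n'\circ T$ with $T$ a unitriangular automorphism of $\mathbb{V}$.

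Your suspicion about $E'$ is justified: carrying out the expansion gives $E' = E - B(r) + Y(r)r_1 - \overline{Y}(r)r_2 + Dr_1 + \overline{D}r_2 + \sigma_1(r_2)S_1'r_1 - \sigma_2(r_1)S_2'r_2 + Cr_1r_2 + \overline{C}r_2r_1$. So the displayed formula omits $Dr_1+\overline{D}r_2$, and its last term should read $-(\sigma_2(r_1)S_2'-Cr_1)r_2$; only this corrected version is consistent with $E=B(r)$ in \cref{thm:deformed-TIC-PBW}.
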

\begin{proof}
	By the computations above, the assignment produces a homomorphism of algebras; moreover, it has an obvious inverse $\widetilde{H}_i\mapsto H_i-r_i$.
	For the second claim, assume that $F\wr \cal{H}_n(\qbar) \simeq \bigoplus_{w\in \fkS_n} A\otimes H_w$.
	Under the map $H_i\mapsto \widetilde{H}_i + r_i$, each element $H_w$ gets sent to an element of the form $\widetilde{H}_w + \sum_{w'<w} a_{w'} \widetilde{H}_{w'}$.
	By upper-triangularity, this means if a non-trivial linear combination of elements $\widetilde{H}_w$ vanishes in $F\wr \cal{H}_n(\qbar')$, then some other non-trivial linear combination of elements $H_w$ must vanish in $F\wr \cal{H}_n(\qbar)$, which contradicts our assumption.
	Hence $F\wr \cal{H}_n(\qbar')$ has a PBW basis, and the inverse implication is proved analogously.
\end{proof}

Combining \cref{cor:PBW-for-rho-nil,prop:TIC-renorm}, we obtain the following elegant criterion for TIC algebras to have a PBW basis.

\begin{thm}\label{thm:deformed-TIC-PBW}
	A TIC algebra $F\wr \cal{H}_n(\qbar)$, $\qbar = (\sigma,\rho_r,S,R,C,\overline{C},D,\overline{D},E)$ has a PBW basis if and only if the following conditions hold:
	\begin{gather}
		\label{eq:SR-condition}S = r + \sigma(r), \qquad R = \gamma - \sigma(r)r;\\
		\label{eq:gamma-condition}\gamma\text{ is $\sigma^2$-central},\qquad \gamma = \sigma(\gamma), \qquad \gamma_i = \sigma_{ji}(\gamma_j), \quad\{i,j\} = \{1,2\},
	\end{gather}\vspace{-18pt}
	\begin{equation}
		\pushQED{\qed}
		\label{eq:CDE-condition}\qquad C = \overline{C} = 0,\qquad D = -Y(r),\qquad \overline{D} = \overline{Y}(r),\qquad E = B(r).\qedhere
		\popQED
	\end{equation}
\end{thm}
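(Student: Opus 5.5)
By \cref{prop:TIC-renorm}, $F\wr\cal{H}_n(\qbar)$ has a PBW basis if and only if the renormalized algebra $F\wr\cal{H}_n(\qbar')$ does, where $\qbar' = (\sigma,0,S',R',C,\overline{C},D',\overline{D}',E')$. The latter is a TIC algebra with $\rho = 0$: the condition \eqref{eq:r-psi-condition} is only needed for $\rho$, and the element $c$ of \cref{lem:no-central} is unchanged. So I will apply \cref{cor:PBW-for-rho-nil} to $\qbar'$ and then translate each condition back into the original parameters.

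First, the quadratic part. \cref{cor:PBW-for-rho-nil} requires $S' = 0$, which is $S = r+\sigma(r)$. Next set $\gamma \coloneqq R'$. Substituting $S = r+\sigma(r)$ gives
\[
R' = R + (r+\sigma(r))r - r^2 = R + \sigma(r)r,
\]
so $R = \gamma - \sigma(r)r$, which is \eqref{eq:SR-condition}. The conditions of \cref{cor:PBW-for-rho-nil} on $R'$ are that it is $\sigma^2$-central, that $R' = \sigma(R')$, and that $R'_2 = \sigma_{12}(R'_1)$ and $R'_1 = \sigma_{21}(R'_2)$. These are literally \eqref{eq:gamma-condition}.

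Next, the cubic part. \cref{cor:PBW-for-rho-nil} requires $C = \overline{C} = D' = \overline{D}' = E' = 0$. Using $C = \overline{C} = 0$ and $S' = 0$, the formulas for the renormalized coefficients collapse:
\[
D' = D + Y(r), \qquad \overline{D}' = \overline{D} - \overline{Y}(r), \qquad E' = E - B(r) + Y(r)r_1 - \overline{Y}(r)r_2.
\]
The first two vanish exactly when $D = -Y(r)$ and $\overline{D} = \overline{Y}(r)$, which gives part of \eqref{eq:CDE-condition}. For $E$, vanishing of $E'$ says $E = B(r) - Y(r)r_1 + \overline{Y}(r)r_2$.

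The main obstacle is reconciling this with the claimed $E = B(r)$. Two things need checking.
\begin{itemize}
\item Whether the displayed formula for $E'$ is correct as printed. It looks suspicious: the last term is $(\sigma_2(r_1)S_2'+Cr_1)r_1$, where one would expect $r_2$. So I would recompute $\widetilde H_{212}-\widetilde H_{121}$ directly. Expand $(H_2-r_2)(H_1-r_1)(H_2-r_2)$ using only the wreath relation $H_i a = \sigma_i(a)H_i + \rho_i(a)$ with $\rho_i(a) = r_i a - \sigma_i(a)r_i$.
\item Whether the $Y(r)r_1 - \overline{Y}(r)r_2$ contribution is already absorbed into $B(r)$. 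The definition of $B(r)$ involves $R_1, R_2$, and these should be rewritten via $R = \gamma - \sigma(r)r$.
\end{itemize}
In that recomputation, the terms $\widetilde H_i r_j$ are moved left with $\sigma$, which produces exactly the $Y$, $\overline Y$ corrections. I will also use the identities $\sigma_{12}(r_1) = r_2$ and $\sigma_{21}(r_2) = r_1$ repeatedly. I expect a careful expansion to show $E' = E - B(r)$ once $D'$, $\overline{D}'$ and $S'$ vanish, giving $E = B(r)$.

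Finally, the converse direction is immediate from the same equivalences. If the conditions of the theorem hold, all hypotheses of \cref{cor:PBW-for-rho-nil} hold for $\qbar'$. Hence $F\wr\cal{H}_n(\qbar')$ has a PBW basis, and so does $F\wr\cal{H}_n(\qbar)$ by \cref{prop:TIC-renorm}.
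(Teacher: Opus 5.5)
Your argument is the paper's own proof: the theorem is obtained by combining \cref{prop:TIC-renorm} with \cref{cor:PBW-for-rho-nil} for the renormalized datum, then translating $S'=0$, $R'=\gamma$ and $C=\overline{C}=D'=\overline{D}'=E'=0$ back to the original parameters. You are right that the printed formula for $E'$ is wrong, and your expectation about it is correct. Expanding $(\widetilde{H}_2+r_2)(\widetilde{H}_1+r_1)(\widetilde{H}_2+r_2)$ and using $R'=R+S'r+\sigma(r)r$ gives $E' = E - B(r) + D'r_1 + \overline{D}'r_2 - C\sigma_1(r_2)r_1 - \overline{C}\sigma_2(r_1)r_2$, so once the other conditions hold, $E'=0$ is exactly $E=B(r)$.
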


In other words, for any choice of $r$ (satisfying~\eqref{eq:r-psi-condition}) and $\gamma$ (satisfying~\eqref{eq:gamma-condition}) there exists the unique septuple of parameters $(S,R,C,\overline{C},D,\overline{D},E)$ such that $F\wr \cal{H}_n(\qbar)$ has a PBW basis.

\begin{cor}
	Let $F\wr \cal{H}_n(\qbar)$ be a TIC algebra with non-deformed cubic braid relation.
	Then it has a PBW basis if and only if $S = r + \sigma(r)$, $R = \gamma - \sigma(r)r$, $\gamma$ satisfies~\eqref{eq:gamma-condition}, and $Y(r) = \overline{Y}(r) = B(r) = 0$.
\end{cor}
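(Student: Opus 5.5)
The statement is the corollary following \cref{thm:deformed-TIC-PBW}. The plan is to specialize that theorem to $C = \overline{C} = D = \overline{D} = E = 0$; no new computation is needed. A TIC algebra with non-deformed cubic relation is the deformed QWP with deformation datum $\qbar = (\sigma,\rho_r,S,R,0,0,0,0,0)$. I will check each implication separately against conditions \eqref{eq:SR-condition}, \eqref{eq:gamma-condition} and \eqref{eq:CDE-condition}.

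First, suppose $F\wr\cal{H}_n(\qbar)$ has a PBW basis. Then \cref{thm:deformed-TIC-PBW} gives \eqref{eq:SR-condition}, i.e. $S = r+\sigma(r)$ and $R = \gamma - \sigma(r)r$. It also gives that $\gamma$ satisfies \eqref{eq:gamma-condition}. Finally, it gives \eqref{eq:CDE-condition}, which with our vanishing parameters reads $0 = -Y(r)$, $0 = \overline{Y}(r)$ and $0 = B(r)$.

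Conversely, suppose $S$, $R$ and $\gamma$ are as stated and $Y(r) = \overline{Y}(r) = B(r) = 0$. Then \eqref{eq:CDE-condition} holds for $C=\overline{C}=D=\overline{D}=E=0$, since $D = 0 = -Y(r)$, $\overline{D} = 0 = \overline{Y}(r)$ and $E = 0 = B(r)$. So all hypotheses of \cref{thm:deformed-TIC-PBW} are met, and that theorem yields the PBW basis.

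The only point needing care is that $B(r)$ is defined using $R$. I will substitute the value $R = \gamma - \sigma(r)r$ into $B(r)$ in both directions, so that the same expression is used throughout. No other obstacle arises, because all the work is already done in \cref{thm:deformed-TIC-PBW}.
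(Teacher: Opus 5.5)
Your proposal is correct and matches the paper, which gives no separate proof: the corollary is \cref{thm:deformed-TIC-PBW} specialized to $C=\overline{C}=D=\overline{D}=E=0$, where \eqref{eq:CDE-condition} reduces to $Y(r)=\overline{Y}(r)=B(r)=0$. Your remark about substituting $R$ into $B(r)$ is harmless but unnecessary, since $R$ is part of the fixed datum and $B(r)$ is evaluated with that same $R$ in both directions.
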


Note that the conditions $Y(r)=0$, $\overline{Y}(r) = 0$ are the associative Yang--Baxter equations (see e.g.~\cite{Aguiar01}), and $B(r) = 0$ is a deformation of the quantum Yang--Baxter equation.

\begin{rmk}
	Without the condition (4) in the definition of TIC algebra, the statement of \cref{thm:deformed-TIC-PBW} is very much false.
	Indeed, let $F = \bbk$, $r=0$, and consider the finite Hecke algebra $\cal{H}(\fkS_2) = \bbk[H]/(H^2-(q-1)H+q)$.
	Then $S = q-1 \neq 0 = r +\sigma(r)$.
\end{rmk}

\subsection{Compatibility with localization}
In order to treat our examples of interest, we need to pass to the localized setting of \cref{ssec:loc-alg}.
We write $A = F^{\otimes n}$ as before.
Let $x: H\to F^{\otimes 2}$ be a central non-zero divisor, and pick an element $r \in F^{\otimes 2}[x^{-1}]$.
The inner derivation $\rho = \rho_r$ is a priori valued in $F^{\otimes 2}[x^{-1}]$, hence a choice of deformation parameters $S,R,\ldots$ only gives rise to a localized QWP algebra $(F\wr \cal{H}_n)_\loc$ as in \cref{ssec:loc-alg}.
However, let us assume that $\rho(\mathbf{f})\in F^{\otimes 2}$ for all $\mathbf{f}\in F^{\otimes 2}$, and all the deformation parameters belong to $A\subset A_\loc$.
In this case, the deformed QWP algebra $F\wr \cal{H}_n$ is well-defined.
If furthermore the conditions of \cref{defn:TIC} hold, we call $F\wr \cal{H}_n$ a TIC algebra by abuse of notation.

\begin{prop}\label{lem:localize}
	Let $x\in F^{\otimes 2}$ be a central non-zero divisor, and consider a TIC algebra $F\wr \cal{H}_n(\qbar)$ with $r \in F^{\otimes 2}[x^{-1}]$.
	It has a PBW basis if and only if the conditions~(\ref{eq:SR-condition}-\ref{eq:CDE-condition}) of \cref{thm:deformed-TIC-PBW} hold.
\end{prop}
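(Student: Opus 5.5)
We reduce \cref{lem:localize} to the unlocalized statement by passing through the localized algebra $(F\wr\cal{H}_n)_\loc$ of \cref{ssec:loc-alg}. The plan has three steps.

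\textbf{Step 1: the localized version of \cref{thm:deformed-TIC-PBW}.} We first prove the theorem for $(F\wr\cal{H}_n)_\loc$, with $A$ replaced by $A_\loc$ throughout. The proof of \cref{thm:QWP-basis} treats $A$ as a black box, so \cref{prop:LNX331} holds over $A_\loc$. The renormalization $\widetilde H_i=H_i-r_i$ of \cref{prop:TIC-renorm} makes sense there, since $r_i\in A_\loc$; the triangularity argument is unchanged, so it gives an isomorphism of localized algebras preserving the PBW property. \cref{prop:rho-zero-nondef} and \cref{cor:PBW-for-rho-nil} only use \cref{lem:no-central}. So we must check that $c_1-\epsilon^{\ell(w)}c_{w(1)}$ is still a non-zero divisor in $A_\loc$. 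This holds because localizing at central non-zero divisors is flat: $A\to A_\loc$ is injective and preserves non-zero divisors. Hence the localized PBW property is equivalent to conditions (\ref{eq:SR-condition}--\ref{eq:CDE-condition}) read in $A_\loc$.

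\textbf{Step 2: the unlocalized PBW basis is equivalent to the localized one.} There is a natural algebra map $\iota:F\wr\cal{H}_n\to(F\wr\cal{H}_n)_\loc$; it is well defined because all parameters lie in $A$ and $\rho(A)\subset A$. We have a commutative square with the PBW maps $\varphi_n:\bigoplus_w A\otimes H_w\to F\wr\cal{H}_n$ and $\varphi_n^\loc:\bigoplus_w A_\loc\otimes H_w\to(F\wr\cal{H}_n)_\loc$. The left vertical arrow is injective, again by flatness.

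For the implication from localized to unlocalized: if $\varphi_n^\loc$ is an isomorphism, then $\varphi_n$ is injective by the square, and surjective by \cref{lem:phi-surj}.

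For the converse: if $\varphi_n$ is an isomorphism, we argue that $(F\wr\cal{H}_n)_\loc\cong A_\loc\otimes_A F\wr\cal{H}_n$. One way is to observe that the right regular-type module $\mathbb{V}$ constructed in the proof of \cref{thm:QWP-basis} base-changes along $A\to A_\loc$. A cleaner way is to avoid this direction entirely: apply \cref{prop:LNX331} directly to $F\wr\cal{H}_n$, noting that conditions (\ref{def:wr1}--\ref{def:br5}) are identities in $A$, and that they hold in $A$ if and only if they hold in $A_\loc$, since $A\subset A_\loc$. These are exactly the hypotheses of the localized \cref{prop:LNX331}, so both directions reduce to equality of a list of identities.

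\textbf{Step 3: conclude.} By Steps 1 and 2, the PBW property is equivalent to (\ref{eq:SR-condition}--\ref{eq:CDE-condition}) holding in $A_\loc$. Since $A\hookrightarrow A_\loc$ and all parameters lie in $A$, these identities hold in $A_\loc$ if and only if they hold as stated, which proves the proposition.

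\textbf{Main obstacle.} The delicate point is Step 1's use of \cref{prop:LNX331} for the renormalized datum $\qbar'$. Its parameters $S',R',D',\dots$ genuinely live only in $A_\loc$, and the intermediate algebra with $\rho=0$ exists only in the localized world. We therefore need \cref{thm:QWP-basis}, and the condition analysis in \cref{cor:PBW-for-rho-nil}, to hold over $A_\loc$. The one place where extra care is needed is the non-zero-divisor property of $c_1\pm c_2$ after localization.
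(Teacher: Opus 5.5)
Your Step 1 is fine, and so is the localized-to-unlocalized half of Step 2. The latter matches the paper, which uses the same commutative square and applies \cref{thm:deformed-TIC-PBW} in the localized setting; the paper does so without your extra check that $c_1\pm c_2$ remain non-zero divisors in $A_\loc$.

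The gap is the converse: showing that a PBW basis for $F\wr\cal{H}_n(\qbar)$ forces one for $(F\wr\cal{H}_n)_\loc$. Your ``cleaner way'' applies \cref{prop:LNX331} to $F\wr\cal{H}_n(\qbar)$ itself. That criterion is \cref{thm:QWP-basis}, which is valid only for QWPs with the undeformed cubic relation $H_iH_{i+1}H_i=H_{i+1}H_iH_{i+1}$. In the proposition the cubic relation is deformed in general. A PBW basis forces $D=-Y(r)$, $\overline{D}=\overline{Y}(r)$, $E=B(r)$, and these are nonzero for formal Hecke, PQWP and quiver Hecke algebras; the paper even applies this very proposition to Frobenius quiver Hecke algebras with $E\neq0$. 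The paper also stresses that no analogue of \cref{thm:QWP-basis} holds for deformed QWPs. The undeformed, $\rho=0$ presentation is reached only through $H_i\mapsto H_i-r_i$, which needs $r_i\in A_\loc$. So there is no list of identities in $A$ that characterizes the unlocalized PBW property, and this is exactly why the detour through $(F\wr\cal{H}_n)_\loc$ exists. Even with an undeformed cubic relation, ``holds in $A$ iff in $A_\loc$ since $A\subset A_\loc$'' gives only the easy direction for operator identities such as \eqref{def:qu1}. The other direction needs that both sides are twisted derivations determined by their restriction to $A$. Your first suggestion, base-changing $\mathbb{V}$, is not carried out, and it again relies on the module built in the proof of \cref{thm:QWP-basis}, i.e.\ on the undeformed setting.

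The paper handles this direction by clearing denominators. Suppose $0\neq G\in\ker(\phi_n)_\loc$. Left multiplication by elements of $\mathscr{S}_x$ touches only the $A_\loc$ factors and is injective, so it produces a nonzero $G\in\bigoplus_w A\otimes H_w$ with $\iota(\phi_n(G))=(\phi_n)_\loc(G)=0$, where $\iota\colon F\wr\cal{H}_n\to(F\wr\cal{H}_n)_\loc$. Injectivity of $\iota$ then gives $\phi_n(G)=0$, so $\phi_n$ is not injective. You should replace your converse with this argument.

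Be aware that the injectivity of $\iota$ carries the real content of this direction; the paper derives it only from $x$ being a non-zero divisor. To make it fully rigorous, do properly what your first suggestion hints at. When $\phi_n$ is bijective, $F\wr\cal{H}_n\cong\bigoplus_w A\otimes H_w$ has no $\mathscr{S}_x$-torsion. An Ore-type argument, using the wreath relation to move localizing elements past the $H_i$, should then identify $(F\wr\cal{H}_n)_\loc$ with $A_\loc\otimes_A F\wr\cal{H}_n$. So the delicate point is this converse, not the one you flagged in Step 1.
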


\begin{proof}
	Note that the subalgebra of $(F\wr \cal{H}_n)_\loc$ generated by $A, H_i$ coincides with the image of the map $\varphi_n$ in \cref{lem:phi-surj}.
	Hence, we have the following commutative square:
	\[
		\begin{tikzcd}
			\bigoplus_{w\in \fkS_n} A\otimes H_w \ar[r,hook]\ar[d,two heads,"\phi_n"]& \bigoplus_{w\in \fkS_n} A_\loc\otimes H_w\ar[d,two heads,"(\phi_n)_\loc"]\\
			F\wr \cal{H}_n\ar[r,hook] & (F\wr \cal{H}_n)_\loc
		\end{tikzcd}
	\]
	The vertical arrows are surjective by \cref{lem:phi-surj}, while the horizontal arrows are injective as the localizing element $x$ is not a zero divisor.
	By \cref{thm:deformed-TIC-PBW}, the arrow on the right is an isomorphism if and only if the conditions~(\ref{eq:SR-condition}-\ref{eq:CDE-condition}) hold.

	As the square above is commutative, $(\phi_n)_\loc$ is injective only if $\phi_n$ is injective.
	In the other direction, let us assume that $(\phi_n)_\loc$ is not injective.
	Fix a non-zero $G\in \bigoplus_{w\in \fkS_n} A_\loc\otimes H_w$ such that $(\phi_n)_\loc(G) = 0$.
	Multiplying $G$ by elements of $\mathscr{S}_x$ on the left, and using the assumption that $x$ is not a zero divisor, we can further assume that $G\in \bigoplus_{w\in \fkS_n} A\otimes H_w$.
	Using the commutativity of the square again, we deduce that $\phi_n(G) = 0$.
	Hence $(\phi_n)_\loc$ is an isomorphism if and only if $\phi_n$ is, so we may conclude.
\end{proof}

\begin{cor}\label{cor:PBW-in-loc}
	Let $r\in F^{\otimes 2}[x^{-1}]$, such that $\rho_r$ preserves $F^{\otimes 2}$.
	Assume that $\gamma = R - \sigma(r)r$ satisfies~\eqref{eq:gamma-condition}.
	Then there exists a choice of parameters $S,C,\overline{C},D,\overline{D},E$ such that $F\wr \cal{H}_n$ has a PBW basis if and only if $r+\sigma(r)\in F^{\otimes 2}$, $Y(r),\overline{Y}(r),B(r)\in F^{\otimes 3}$.
	If these conditions hold, the correct choice is given by equations \eqref{eq:SR-condition}, \eqref{eq:CDE-condition}.\qed
\end{cor}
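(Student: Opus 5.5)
The plan is to derive the corollary directly from \cref{lem:localize}, together with the explicit formulas of \cref{thm:deformed-TIC-PBW}. Throughout, $r$ and $R$ are fixed and the remaining parameters $S,C,\overline{C},D,\overline{D},E$ are free.

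For the ``only if'' direction, I will assume that some choice of parameters in $A$ makes $F\wr\cal{H}_n$ a TIC algebra with PBW basis. By \cref{lem:localize}, conditions (\ref{eq:SR-condition}--\ref{eq:CDE-condition}) must then hold. These force
\[
S = r+\sigma(r),\qquad C=\overline{C}=0,\qquad D=-Y(r),\qquad \overline{D}=\overline{Y}(r),\qquad E=B(r).
\]
The deformation parameters are required to lie in $A\subset A_\loc$, that is, $S\in F^{\otimes 2}$ and $D,\overline{D},E\in F^{\otimes 3}$. Hence $r+\sigma(r)\in F^{\otimes 2}$ and $Y(r),\overline{Y}(r),B(r)\in F^{\otimes 3}$. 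The same argument shows that any valid choice is the one given by \eqref{eq:SR-condition} and \eqref{eq:CDE-condition}, which gives the uniqueness statement.

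For the ``if'' direction, I will define the parameters by \eqref{eq:SR-condition} and \eqref{eq:CDE-condition}. The integrality hypotheses ensure that they all lie in $A$. Together with the assumption that $\rho_r$ preserves $F^{\otimes 2}$, this means the deformed QWP $F\wr\cal{H}_n$ is well defined and is a TIC algebra in the extended sense preceding \cref{lem:localize}.

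One point needs care here. Condition \eqref{eq:SR-condition} asks that $R = \gamma - \sigma(r)r$, and this holds by the definition $\gamma = R-\sigma(r)r$. Since $R$ is a given element of $F^{\otimes 2}$, it imposes no further integrality constraint, even though $\sigma(r)r$ alone may only lie in the localization. Condition \eqref{eq:gamma-condition} is assumed, and \eqref{eq:CDE-condition} holds by construction. So \cref{lem:localize} applies and yields the PBW basis.

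I expect no real obstacle. The only subtle step is confirming that membership of all parameters in $A$ is exactly what is needed for \cref{lem:localize} to apply. That proposition handles the passage between $F\wr\cal{H}_n$ and $(F\wr\cal{H}_n)_\loc$ using the fact that $x$ is not a zero divisor, so this step is also taken care of.
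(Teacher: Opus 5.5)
Your route is the paper's own: the corollary carries a \qed because it is \cref{lem:localize} plus the explicit formulas of \cref{thm:deformed-TIC-PBW}, read together with the requirement that all deformation parameters lie in $A$. Your ``only if'' direction and the uniqueness claim are fine. The problem is the one step you flagged as delicate. From $\gamma = R-\sigma(r)r$ one gets $R = \gamma + \sigma(r)r$, not $R = \gamma - \sigma(r)r$. So the second equation of \eqref{eq:SR-condition} does not ``hold by definition''; read literally, it would force $2\sigma(r)r = 0$. Your appeal to \cref{lem:localize} in the ``if'' direction breaks down exactly there.

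The source is a sign inconsistency in the paper, and the fix is to use the correct $\gamma$. In the proof of \cref{thm:deformed-TIC-PBW} (via \cref{prop:TIC-renorm} and \cref{cor:PBW-for-rho-nil}), the element that must satisfy \eqref{eq:gamma-condition} is the constant term of the renormalized quadratic relation, $R' = R + Sr - r^2$. For $S = r+\sigma(r)$ this equals $R + \sigma(r)r$, which is also the convention the paper uses in the proposition on polynomial representations. So the hypothesis should be read as ``$\gamma \coloneqq R + \sigma(r)r$ satisfies \eqref{eq:gamma-condition}'', i.e.\ $R = \gamma - \sigma(r)r$ as in \eqref{eq:SR-condition}; with that reading your argument goes through verbatim.

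If you keep the literal $\gamma = R - \sigma(r)r$, you need an extra input: $\sigma(r)r$ must itself be $\sigma$-invariant and $\sigma^2$-central. The coloured condition for $\sigma(r)r$ is automatic from $\sigma_{ji}(r_j) = r_i$ and the braid relation for $\sigma$. With that input, $R\pm\sigma(r)r$ satisfy \eqref{eq:gamma-condition} simultaneously, as in the lemma opening \cref{sec:expls}. This holds in all of the paper's examples, but not in general.
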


\subsection{Polynomial representation}
Replacing $H$ with $\widetilde{H}-r$ as in \cref{prop:TIC-renorm}, we immediately obtain the following analogue of \cref{prop:action}. 

\begin{prop}
	Let $F\wr \cal{H}_n$ be a TIC algebra, and write $\gamma = R + \sigma(r)r$.
	The maps \eqref{eq:repn} define an action of $F\wr \cal{H}_n$ on $F^{\otimes n}$ if and only if the element $\widetilde{\alpha} \coloneqq \alpha - r$ satisfies the following conditions:
	\begin{align}
		&\tag{R1'} \gamma = \sigma(\widetilde{\alpha})\widetilde{\alpha},\\
		&\label{def:QYBE-poly}\tag{QYB} \sigma_{12}(\widetilde{\alpha}_1)\sigma_2(\widetilde{\alpha}_1)\widetilde{\alpha}_1 = \sigma_{21}(\widetilde{\alpha}_2)\sigma_1(\widetilde{\alpha}_2)\widetilde{\alpha}_1.
	\end{align}
\end{prop}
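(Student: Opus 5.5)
The plan is to reduce to \cref{prop:action} via the renormalization of \cref{prop:TIC-renorm}. By that lemma, $F\wr\cal{H}_n(\qbar)\simeq F\wr\cal{H}_n(\qbar')$ through $H_i\mapsto \widetilde{H}_i + r_i$. The action \eqref{eq:repn} of $H_i$ is $h_i(x)=\sigma_i(x)\alpha_i+\rho_i(x)$. Since $\rho_i(x) = r_ix-\sigma_i(x)r_i$, the corresponding action of $\widetilde{H}_i = H_i - r_i$ is
\[
	\widetilde{h}_i(x) = h_i(x) - r_ix = \sigma_i(x)(\alpha_i - r_i) = \sigma_i(x)\widetilde{\alpha}_i .
\]
This is exactly the action \eqref{eq:repn} for the datum $\qbar'$, with $\rho'=0$ and $\alpha$ replaced by $\widetilde{\alpha}$. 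Because the assignment is an isomorphism fixing $A$, the maps $h_i$ define an action of $F\wr\cal{H}_n(\qbar)$ if and only if the maps $\widetilde{h}_i$ define an action of $F\wr\cal{H}_n(\qbar')$. The PBW hypothesis needed for \cref{prop:action} (in its deformed form with \eqref{def:cube-al-def}) transfers along the isomorphism, so I would work under the standing assumption that the TIC algebra has a PBW basis.

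Next I apply \cref{prop:action}, with \eqref{def:cube-al-def} in place of \eqref{def:cube-al}, to $\qbar'$, where all $\rho$-terms vanish. By \cref{thm:deformed-TIC-PBW} and its proof via \cref{cor:PBW-for-rho-nil}, PBW forces
\[
	S' = C = \overline{C} = D' = \overline{D}' = E' = 0, \qquad R' = \gamma .
\]
I would double-check the sign convention for $\gamma$, since the statement writes $R+\sigma(r)r$ while the theorem uses $R-\sigma(r)r$. With these values, \eqref{def:quad-al} becomes $R' = \sigma(\widetilde\alpha)\widetilde\alpha$, which is (R1'). The cubic condition \eqref{def:cube-al-def} becomes
\[
	\sigma_{12}(\widetilde\alpha_1)\sigma_2(\widetilde\alpha_1)\widetilde\alpha_1 = \sigma_{21}(\widetilde\alpha_2)\sigma_1(\widetilde\alpha_2)\widetilde\alpha_2,
\]
which should be (QYB); the last index as printed in the statement looks like a typo. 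Commutation of distant generators is automatic, as shown in the proof of \cref{prop:action}.

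The main obstacle is justifying the use of \cref{prop:action} in the deformed setting. Its proof used \eqref{def:br2} and \eqref{def:br3}, which change once $D,\overline{D},E$ are present. For $\qbar'$, however, $\rho=0$ and $D'=\overline{D}'=E'=0$, so the original non-deformed statement applies verbatim. What remains is bookkeeping of $\epsilon$-signs from $H$ being odd and invertible when moving $r_i$ past $x$; the Yoneda conventions of \cref{ssec:invertible} should handle these, and I would check them once on the quadratic relation.
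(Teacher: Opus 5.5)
Your proposal is correct and is the paper's own argument. You renormalize via \cref{prop:TIC-renorm} so that $\widetilde{H}_i=H_i-r_i$ acts by $x\mapsto\sigma_i(x)\widetilde{\alpha}_i$, then apply \cref{prop:action} to the datum with $\rho=0$, where the PBW hypothesis (implicit in the statement, as you note) forces $S'=C=\overline{C}=D'=\overline{D}'=E'=0$ and $R'=R+Sr-r^2=R+\sigma(r)r=\gamma$.

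That computation also settles your sign worry: \cref{thm:deformed-TIC-PBW} reads $R=\gamma-\sigma(r)r$, so it agrees with the statement. You are also right that the last factor of (QYB) should be $\widetilde{\alpha}_2$. When $r$ lies only in a localization, the same argument runs for the localized algebra acting on the localized tensor space.
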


In particular, \cref{thm:deformed-TIC-PBW} can be slightly rephrased to include the parameter $\alpha$.

\begin{cor}\label{cor:TIC-and-poly}
	Consider a TIC algebra $F\wr \cal{H}_n(\qbar)$.
	The following conditions are equivalent:
	\begin{enumerate}
		\item The algebra $F\wr \cal{H}_n(\qbar)$ has a PBW basis, and $\alpha\in F\otimes F$ defines a polynomial representation;
		\item Conditions (\ref{eq:SR-condition}--\ref{eq:CDE-condition}) hold with $\gamma \coloneqq \sigma(\widetilde{\alpha})\widetilde{\alpha}$, and $\widetilde{\alpha} = (\alpha -r)$ satisfies the quantum Yang--Baxter equation \eqref{def:QYBE-poly}. \qed
	\end{enumerate}
\end{cor}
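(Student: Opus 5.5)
The corollary combines the preceding proposition on the polynomial representation of TIC algebras with \cref{thm:deformed-TIC-PBW}. The plan is to verify each implication by matching conditions one by one. The only real content is identifying $\gamma = R - \sigma(r)r$ from \eqref{eq:SR-condition} with $\sigma(\widetilde{\alpha})\widetilde{\alpha}$ from condition (R1').

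\emph{(1)$\Rightarrow$(2).} Assume $F\wr\cal{H}_n(\qbar)$ has a PBW basis. Then \cref{thm:deformed-TIC-PBW} gives \eqref{eq:SR-condition}--\eqref{eq:CDE-condition} with $\gamma \coloneqq R - \sigma(r)r$. Since $\alpha$ defines a polynomial representation, the preceding proposition (which itself comes from \cref{prop:action} via the renormalization of \cref{prop:TIC-renorm}) gives (R1') and \eqref{def:QYBE-poly}.

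To justify this use of the proposition, I would redo its derivation. Under $H\mapsto \widetilde{H}+r$, the action $h_i(x) = \sigma_i(x)\alpha_i + \rho_i(x)$ becomes $\widetilde{h}_i(x) = \sigma_i(x)\widetilde{\alpha}_i$, because $\rho_r(x) - r x = -\sigma(x)r$. We are then in the $\rho=0$ situation with parameters $S'=0$ and $R'=\gamma$; a short check shows $R' = R + Sr - r^2 = R + \sigma(r)r$. I flag a sign discrepancy here: the proposition writes $\gamma = R+\sigma(r)r$, whereas the theorem uses $R - \sigma(r)r$, and this needs to be reconciled. For $\rho=0$ and $S'=0$, relation \eqref{def:quad-al} reads $R' = \sigma(\widetilde{\alpha})\widetilde{\alpha}$. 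Relation \eqref{def:cube-al-def} with $D'=\overline{D}'=E'=0$ (from \eqref{eq:CDE-condition}) reads as \eqref{def:QYBE-poly}.

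Hence $\gamma = \sigma(\widetilde{\alpha})\widetilde{\alpha}$, so the conditions of the theorem hold with this $\gamma$, and $\widetilde{\alpha}$ satisfies the quantum Yang--Baxter equation.

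\emph{(2)$\Rightarrow$(1).} The conditions \eqref{eq:SR-condition}--\eqref{eq:CDE-condition} hold, with $\gamma$ satisfying \eqref{eq:gamma-condition}. By \cref{thm:deformed-TIC-PBW}, the algebra therefore has a PBW basis. Then (R1') holds by the very definition $\gamma = \sigma(\widetilde{\alpha})\widetilde{\alpha}$, and \eqref{def:QYBE-poly} is assumed. The proposition then yields the action.

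The main obstacle is bookkeeping rather than new ideas. First, the sign convention for $\gamma$ must agree between the theorem and the renormalized quadratic parameter $R'$; if it does not, I would recompute $R' = R+Sr-r^2$ with $S=r+\sigma(r)$, taking care with the $H$-twist braiding signs $\epsilon$. Second, the vanishing of $D',\overline{D}',E'$, forced by \cref{prop:rho-zero-nondef}, is exactly what reduces \eqref{def:cube-al-def} to the QYBE. Once $\gamma$ is correctly identified, both directions are immediate.
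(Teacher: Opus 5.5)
Your argument is correct and is exactly the intended one. The paper gives no separate proof: the corollary is an immediate combination of the renormalized polynomial-representation proposition (obtained via \cref{prop:TIC-renorm} and \cref{prop:action}) with \cref{thm:deformed-TIC-PBW}, matched direction by direction as you do.

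The sign discrepancy you flag is not real. \eqref{eq:SR-condition} reads $R = \gamma - \sigma(r)r$, i.e.\ $\gamma = R + \sigma(r)r = R'$. This agrees with the proposition and with your own computation $R' = R + Sr - r^2 = R+\sigma(r)r$. The expression $R - \sigma(r)r$ appears only as a typo in the introduction and in \cref{cor:PBW-in-loc}, so no further reconciliation is needed.
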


\subsection{Relation to Lie bialgebras}\label{sec:Lie-bialg}
In this subsection, consider the case $\cal{C} = \Vec$, $\sigma = \tau$ the standard braiding.
Recall that an associative $\bbk$-algebra $F$ together with a $\bbk$-linear map $\delta:F\to F\otimes F$ is called a \textit{Lie bialgebra} if $\delta$ is a Lie cobracket, and
\begin{equation*}
	\delta([a,b]) = (\ad_a\otimes 1 + 1\otimes \ad_a)\delta(b) - (\ad_b\otimes 1 + 1\otimes \ad_b)\delta(a).
\end{equation*}

Given $r\in F\otimes F$, consider the map 
\[
	\delta_r\coloneqq \rho_r\circ (\eta_F \otimes \id_F): F\to F\otimes F; \qquad \delta_r(x) = r(1\otimes x)-(x\otimes 1)r.
\]

\begin{prop}\label{cor:Lie-bialg}
	Consider a TIC algebra $F\wr \cal{H}_n$ in $\Vec$ with $\sigma = \tau$, $S=0$ and $D = -\overline{D}$.
	If it has a PBW basis, then $(F,\mu-\mu\circ\tau, \delta_r)$ is a Lie bialgebra.
\end{prop}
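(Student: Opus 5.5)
The plan is to use \cref{thm:deformed-TIC-PBW} to turn the PBW hypothesis into identities for $r$, and then check the three Lie bialgebra axioms one at a time. Here $\sigma=\tau$, so $\phi=\psi=\id$, $\epsilon=1$, and $\sigma^2=\id$.

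First I extract the conditions on $r$.
\begin{itemize}
\item By \eqref{eq:SR-condition}, $0=S=r+\tau(r)$, so $r$ is skew: $r_{21}=-r_{12}$.
\item Since $\sigma^2=\id$, ``$\gamma$ is $\sigma^2$-central'' means $\gamma$ is central in $F^{\otimes 2}$. Also $R=\gamma+r^2$.
\item By \eqref{eq:CDE-condition}, $D=-Y(r)$ and $\overline{D}=\overline{Y}(r)$, so the hypothesis $D=-\overline{D}$ becomes $Y(r)=\overline{Y}(r)$.
\end{itemize}
Written in leg notation and using skewness, $Y(r)=r_{12}r_{23}+r_{13}r_{12}-r_{23}r_{13}$ and $\overline{Y}(r)=r_{23}r_{12}+r_{13}r_{23}-r_{12}r_{13}$. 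I expect $Y-\overline{Y}=0$ to rearrange into the classical Yang--Baxter equation
\[
[r_{12},r_{13}]+[r_{12},r_{23}]+[r_{13},r_{23}]=0 .
\]
I would verify this sign bookkeeping first, since everything later depends on it.

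Next I rewrite $\delta_r$ in coboundary-like form. Applying the twisted derivation rule \eqref{def:wr2} to $a=1\otimes x$, $b=1\otimes y$ gives
\[
\delta_r(xy)=(x\otimes1)\delta_r(y)+\delta_r(x)(1\otimes y).
\]
Skewness of $r$ gives $\tau\delta_r(x)=-r(x\otimes 1)+(1\otimes x)r$. So coskewness $\tau\delta_r=-\delta_r$ is equivalent to
\[
[r,\,x\otimes1+1\otimes x]=0\quad\text{for all }x\in F,
\]
i.e.\ $r$ is invariant under the adjoint action. I would try to derive this invariance from the remaining PBW conditions that have not yet been used: the centrality of $\gamma$, the relation $\gamma=\tau(\gamma)$, and the requirement that $E=B(r)$, $R$ and $\rho_r$ land in the non-localized algebra. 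Concretely, I would expand the centrality of $\gamma=R-r^2$ against elements $x\otimes 1$ and $1\otimes x$.

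Granting invariance, we get
\[
\delta_r(x)=r(1\otimes x)-(x\otimes1)r=\tfrac12\big[r,\,1\otimes x-x\otimes 1\big],
\]
which is, up to normalisation, the coboundary $x\mapsto(\ad_x\otimes1+1\otimes\ad_x)(r)$. The cocycle identity for $\delta([a,b])$ is then the standard fact that a coboundary is a $1$-cocycle. Alternatively, it follows by antisymmetrising the twisted Leibniz rule above and inserting invariance. Co-Jacobi follows from the textbook computation (Drinfeld): for skew, invariant $r$, the co-Jacobi defect of $\delta_r$ equals $\ad_x$ applied to the CYBE expression. That expression vanishes by the previous step.

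The main obstacle is the invariance $[r,x\otimes1+1\otimes x]=0$. If it cannot be extracted from the PBW conditions, I would instead check coskewness and co-Jacobi directly from the non-coboundary formula. For that I would expand $\delta_r\otimes\id$ and the cyclic sum into products of $r_{12},r_{13},r_{23}$ with $x$ placed in various legs, and cancel the terms using the associative Yang--Baxter identity $Y(r)=\overline{Y}(r)$ rather than only its CYBE consequence.
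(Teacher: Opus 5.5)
Your first step is the paper's entire argument. There, $S=0$ gives $r_{21}=-r_{12}$, and $D=-\overline{D}$ gives $Y(r)=\overline{Y}(r)$, i.e.\ the CYBE; the paper then cites \cite[Prop.~2.1.2]{ChariPressley}. Your leg expressions have sign slips. Since $\sigma_1(r_2r_1)=r_{13}r_{21}$ and $\sigma_2(r_1r_2)=r_{13}r_{32}$, skewness gives $Y(r)=r_{12}r_{23}-r_{13}r_{12}-r_{23}r_{13}$ and $\overline{Y}(r)=r_{23}r_{12}-r_{13}r_{23}-r_{12}r_{13}$. With these, $Y(r)-\overline{Y}(r)$ is exactly the CYBE expression. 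You are right to distrust that citation: it concerns the coboundary $x\mapsto[x\otimes1+1\otimes x,\,r]=-\rho_r(x\otimes1+1\otimes x)$, whereas $\delta_r(x)=\rho_r(1\otimes x)$. However, you misidentify what is missing. From your own formula for $\tau\delta_r$,
\[
\delta_r(x)+\tau\delta_r(x)=r\,(1\otimes x-x\otimes1)+(1\otimes x-x\otimes1)\,r,
\]
so coskewness says that $r$ \emph{anticommutes} with $1\otimes x-x\otimes1$; it does not say that $r$ is ad-invariant. Moreover, ad-invariance would give $\tfrac12[r,1\otimes x-x\otimes1]=[r,1\otimes x]$, which differs from $\delta_r(x)$ by $(1\otimes x-x\otimes1)r$.

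The genuine gap is that no PBW identity can supply the missing condition, by your main route or your fallback, because the statement with $\delta_r$ as defined is false. Take $F=\bbk[x]$, $c=x$, $r=x\otimes1-1\otimes x$ and $R=0$. Then $\gamma$ is a symmetric polynomial, so \eqref{eq:gamma-condition} holds. Choose $S,C,\overline{C},D,\overline{D},E$ as prescribed by \cref{thm:deformed-TIC-PBW}. This gives a TIC algebra with PBW basis and $S=0$. Since $F$ is commutative, the CYBE expression vanishes, so $D=-\overline{D}$. But $\delta_r(x)=-(x\otimes1-1\otimes x)^2$ is nonzero and $\tau$-symmetric, so $\delta_r$ is not a Lie cobracket unless $2=0$ in $\bbk$. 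In fact, for any commutative $F$, coskewness forces $2\delta_r=0$. The degenerate affine Hecke algebra of \cref{ssec:affine-Hecke}, with $r=(x_1-x_2)^{-1}$, fails in the same way: $\delta_r(x)=-1\otimes1$. What skewness and the CYBE do give, via Chari--Pressley, is a Lie bialgebra structure on $(F,\mu-\mu\circ\tau)$ with cobracket $\delta(x)=[x\otimes1+1\otimes x,\,r]$. That is what the paper's one-line proof actually establishes. The correct repair is to replace $\delta_r$ by this coboundary, not to look for further identities satisfied by $r$.
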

\begin{proof}
	Thanks to \cref{thm:deformed-TIC-PBW}, we know that $S = r + \sigma(r)$, $D = -Y(r)$, $\overline{D} = \overline{Y}(r)$.
	Because of our assumptions, this implies
	\[
		r_{12} + r_{21} = 0,\qquad 
		Y(r) - \overline{Y}(r) = [r_{12},r_{13}] + [r_{12},r_{23}] + [r_{13},r_{23}] = 0.
	\]
	By~\cite[Prop.~2.1.2]{ChariPressley}, we conclude that $(F,\mu-\mu\circ\tau, \delta_r)$ is a Lie bialgebra.
\end{proof}

\begin{rmk}\label{rmk:Lai-upcoming}
	One can interpret \cref{cor:Lie-bialg} as saying that the (cocommutative) universal enveloping algebra $U(\mathfrak{gl}_1(F))$ admits a non-cocommutative deformation, supplied by $r$.
	A natural question then is whether a similar statement can be made about $\mathfrak{gl}_n(F)$, whether we have $S=D+\overline{D}=0$ or not.
	This is the subject of ongoing work with C.-J. Lai~\cite{LM26}, where we construct a certain deformation of $U(\mathfrak{gl}_n(F))$ as a limit of wreath Schur algebras associated to polynomial quantum wreath products $F[x]\wr \cal{H}_n$; see \cref{ssec:PQWP} for definitions.
\end{rmk}

\subsection{Useful lemmata}
Recall that trigonometric solutions of the graded version of associative Yang--Baxter equation $Y(r) = 0$ were studied in~\cite{Schedler03}.
One of the curious observations of \textit{loc.~cit.} is that such solutions automatically satisfy the \textit{quantum} Yang--Baxter equation as well, which, provided that $R$ is a scalar, is precisely $B(r) = 0$.
A similar observation holds in our situation.

\begin{lem}\label{lem:simplify-YYB}
	Let $F\wr \cal{H}_n$ be a TIC algebra with PBW basis.
	Assume that $S\in \bbk$.
	If $Y(r) = 0$, then we automatically have $\overline{Y}(r) = 0$.
	If moreover $\sigma(r)r$ is central and $\sigma_1(r_2) = \sigma_2(r_1)$, we have $B(r) = \sigma_1(r_2)(R_2 - R_1)$.
\end{lem}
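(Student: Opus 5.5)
The plan is to reduce everything to identities in the $r_i$ alone. First, since $F\wr\cal{H}_n$ has a PBW basis, \cref{thm:deformed-TIC-PBW} gives $S = r+\sigma(r)$, $D=-Y(r)$, $\overline{D}=\overline{Y}(r)$, $E=B(r)$ and $R = \gamma-\sigma(r)r$. The hypothesis $S\in\bbk$ therefore means $\sigma(r) = S - r$, that is, $\sigma_i(r_i) = S-r_i$ for $i=1,2$. Combined with the identities $\sigma_{12}(r_1)=r_2$ and $\sigma_{21}(r_2)=r_1$, valid in any TIC algebra, this lets me compute the action of every element of the braid group on $r_1$, $r_2$ and $u\coloneqq\sigma_1(r_2)$. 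Concretely, I expect
\[
\sigma_{121}(r_1)=S-r_2,\qquad \sigma_{121}(r_2)=S-r_1,\qquad \sigma_{121}\sigma_1(r_2)=S-\sigma_2(r_1),\qquad \sigma_{121}\sigma_1(r_1)=r_2 .
\]
Each follows by writing $\sigma_{121}=\sigma_{212}$ and using $\sigma_{21}(r_2)=r_1$ or $\sigma_{12}(r_1)=r_2$ together with $\sigma_i(r_i)=S-r_i$.

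For the first claim, I apply the algebra automorphism $\sigma_{121}$ to $Y(r)=0$. Because $\sigma$ is an algebra map and $S$ is a central scalar, expanding gives
\[
\sigma_{121}(Y(r)) = r_2r_1 - \sigma_2(r_1)r_2 + S\sigma_2(r_1) - r_1\sigma_2(r_1).
\]
On the other hand, $\sigma_2(r_1r_2)=\sigma_2(r_1)(S-r_2)$, so this expression is exactly $\overline{Y}(r)$. Hence $\overline{Y}(r)=\sigma_{121}(Y(r))=0$. The only delicate point is the bookkeeping of the braiding scalars $\epsilon$ coming from $H$-elements. Since $\cal{C}$ is symmetric and $\sigma$ is built from the braiding, I expect these signs to cancel on both sides, and I will check them once in the super case.

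For the second claim, set $u=\sigma_1(r_2)=\sigma_2(r_1)$. Then $Y(r)=0=\overline{Y}(r)$ give the rewriting rules
\[
r_1r_2 = r_2u + ur_1 - Su,\qquad r_2r_1 = r_1u + ur_2 - Su,
\]
and likewise $r_1\sigma_2(r_1)=r_2r_1+u(S-r_2)$ and $r_2\sigma_1(r_2)=r_1r_2+u(S-r_1)$. Substituting these into $B(r)$ reduces it to
\[
B(r)= r_2r_1r_2 - r_1r_2r_1 + u\bigl((Sr-r^2)_2-(Sr-r^2)_1\bigr) + u(R_2-R_1),
\]
where $Sr-r^2=\sigma(r)r$. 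So it remains to show $r_2r_1r_2-r_1r_2r_1 = u\bigl((\sigma(r)r)_1-(\sigma(r)r)_2\bigr)$. This is the main obstacle, since naive substitution of the rewriting rules is circular: it reduces the claim to $r_1ur_2=r_2ur_1$.

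To break the circularity, I will use the centrality of $\sigma(r)r$, which has not been used so far. The idea is to multiply the rewriting rule for $r_1r_2$ by $r_1$ on the left and the rule for $r_2r_1$ by $r_2$ on the right (and the symmetric variants), and move the central factors $(\sigma(r)r)_i = Sr_i - r_i^2$ past $u$ and past $r_j$. This should give a second independent expression for $r_1ur_2-r_2ur_1$ in terms of $u\bigl((\sigma(r)r)_1-(\sigma(r)r)_2\bigr)$, and comparing the two forces the desired identity. If this fails, the fallback is to apply $\sigma_1$ and $\sigma_2$ to the relation $Y(r)=0$, producing further AYBE-type relations, and solve the resulting linear system for the cubic terms.
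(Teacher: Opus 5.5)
Your first claim is correct. Applying $\sigma_{121}$ to $Y(r)$ gives exactly $\overline{Y}(r)$; the paper applies $\sigma_2$ instead, and both work. No braiding scalars enter, since you only use that $\sigma$ is multiplicative and that $S$ is a scalar. Your reduction of the second claim is also correct: with $u=\sigma_1(r_2)=\sigma_2(r_1)$ one has $B(r)-u(R_2-R_1)=r_1ur_2-r_2ur_1$. So the identity you must prove is equivalent to $r_1ur_2=r_2ur_1$, which is exactly what the paper proves.

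The gap is that you never prove this identity. You call it the main obstacle and leave only a tentative plan with a fallback.

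The ``circularity'' comes only from how you rebracket $r_2r_1r_2$:
\begin{itemize}
\item Expanding $(r_2r_1)r_2$ by the $\overline{Y}$-rule just undoes your first substitution.
\item The natural symmetric comparison (the $Y$-rule on $r_2(r_1r_2)$, the $\overline{Y}$-rule on $r_1(r_2r_1)$) yields, after using centrality, only $2(r_1ur_2-r_2ur_1)=0$. That is insufficient over an arbitrary commutative ring.
\item Of the two operations you explicitly name, the first produces $r_1^2r_2$, which does not enter the identity. The second is precisely the circular substitution.
\end{itemize}

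The missing step is to apply the $Y$-rule $r_1r_2=r_2u+ur_1-Su$ to the \emph{inner} product in $r_2(r_1r_2)$, after having applied the $\overline{Y}$-rule $r_1u=r_2r_1+u(S-r_2)$ to $r_1u$:
\[
r_1ur_2=r_2r_1r_2+u(\sigma(r)r)_2=r_2ur_1-(\sigma(r)r)_2u+u(\sigma(r)r)_2=r_2ur_1 .
\]
The last equality is the centrality of $(\sigma(r)r)_2$ in $F^{\otimes 3}$ (or its localization).

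Equivalently, in your own formulation, expand both $r_2(r_1r_2)$ and $(r_1r_2)r_1$ using the $Y$-rule only. The $r_2ur_1$ terms cancel, giving $r_2r_1r_2-r_1r_2r_1=u(\sigma(r)r)_1-(\sigma(r)r)_2u$. By centrality this equals your target $u\bigl((\sigma(r)r)_1-(\sigma(r)r)_2\bigr)$. No further AYBE-type relations are needed.
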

\begin{proof}
	Applying $\sigma_2$ to $Y(r)$, we get:
	\begin{align*}
		0 &= \sigma_2(Y(r)) = \sigma_2(r_1)\sigma_2(r_2) + r_1(S-\sigma_2(r_1)) - (S-r_2)r_1\\
		&= \sigma_2(r_1)\sigma_2(r_2) - r_1\sigma_2(r_1) + r_2r_1 = \overline{Y}(r).
	\end{align*}
	For the second claim, it suffices to show that $r_1\sigma_2(r_1)r_2 = r_2\sigma_1(r_2)r_1$:
	\begin{align*}
		r_1\sigma_2(r_1)r_2 & \stackrel{\overline{Y}(r) = 0}{=\joinrel=\joinrel=} r_2r_1r_2 + S\sigma_2(r_1)r_2 - \sigma_2(r_1)r_2^2 \\
		&\stackrel{Y(r) = 0}{=\joinrel=\joinrel=} r_2^2\sigma_1(r_2) - Sr_2\sigma_1(r_2) + r_2\sigma_1(r_2)r_1 + S\sigma_2(r_1)r_2 - \sigma_2(r_1)r_2^2\\
		& = r_2\sigma_1(r_2)r_1 + (\sigma_2(r_2)r_2)\sigma_1(r_2) - \sigma_2(r_1)(\sigma_2(r_2)r_2) = r_2\sigma_1(r_2)r_1,
	\end{align*}
	and so we are done.
\end{proof}

In particular, \cref{lem:simplify-YYB} provides us with a rich source of TIC algebras with PBW basis.
Namely, let $r$ be a fractional skew-symmetric solution of the associative Yang--Baxter equation $Y(r) = 0$.
Set $R \in \bbk$, and assume that $\sigma = \tau$ the natural braiding.
Then \cref{cor:PBW-in-loc} applies, as long as $\sigma(r)r$ is central.

\medskip
\section{Examples of TIC algebras}\label{sec:expls}
While the class of TIC algebras is by definition smaller than the class of (deformed) QWP algebras, essentially all examples of QWP algebras of ``affine Hecke flavour'' that occur in nature are actually TIC algebras.
In this section, we list all such examples the author is aware of.
In each case, we specify $\sigma$, $r$ and $R$, and then deduce the rest of the relations from \cref{thm:deformed-TIC-PBW,cor:PBW-in-loc} by imposing that our algebra has a PBW basis.
This recovers various basis theorems in the literature.

From now on, we assume that the category $\cal{C}$ is either $\Vec$ or $\sVec$, and set $\sigma = \tau$ everywhere except for \cref{ssec:ns-Frob,ssec:Sergeev}.
The following claim immediately follows from assumption~\eqref{eq:r-psi-condition}:
\begin{lem}
	Assume that $\sigma(r)r$ is $\sigma$-invariant and $\sigma^2$-central.
	Then $\gamma$ satisfies~\eqref{eq:gamma-condition} if and only if $R$ does.\qed
\end{lem}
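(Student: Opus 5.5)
Write $P\coloneqq \sigma(r)r$, so that $\gamma = R - P$ (following \cref{thm:deformed-TIC-PBW}, with $R = \gamma - \sigma(r)r$ read as $\gamma = R+\sigma(r)r$ up to the sign convention used there; the argument is insensitive to the sign). The plan is to check each of the three conditions in~\eqref{eq:gamma-condition} separately, and to show that each one is $\bbk$-linear in its argument and holds for $P$. Then $\gamma$ satisfies a given condition if and only if $R$ does. $\sigma$-invariance and $\sigma^2$-centrality of $P$ are given by hypothesis, and both conditions are linear: $\sigma(x)=x$ and $xf=\sigma^2(f)x$ are preserved under sums and differences. So the first two conditions transfer immediately from $R$ to $\gamma$ and back.

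It remains to check the third condition, $P_i = \sigma_{ji}(P_j)$ for $\{i,j\}=\{1,2\}$. Since $\sigma_{ji}$ is an algebra automorphism of $F^{\otimes 3}$, we have $\sigma_{12}(P_1) = \sigma_{12}(\sigma_1(r_1))\,\sigma_{12}(r_1)$.

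\textbf{Step 1.} The second factor is $\sigma_{12}(r_1) = r_2$, as computed after \cref{defn:TIC} using~\eqref{eq:r-psi-condition}.

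\textbf{Step 2.} For the first factor, note that $\sigma(r) = (\phi\otimes\psi)\tau(r)$. We want $\sigma_{12}(\sigma(r)_1) = \sigma(r)_2$, which holds provided $(\psi\otimes\psi)(\sigma(r)) = \sigma(r)$ (and symmetrically, $(\phi\otimes\phi)(\sigma(r)) = \sigma(r)$ for $\sigma_{21}$). This follows because $\phi,\psi$ commute and $\tau$ is natural:
\[
(\psi\otimes\psi)(\phi\otimes\psi)\tau(r) = (\phi\otimes\psi)\tau\bigl((\psi\otimes\psi)(r)\bigr) = \sigma(r),
\]
and similarly for $\phi\otimes\phi$. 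The same computation as in the paper then gives $\sigma_{12}(\sigma(r)_1) = \sigma(r)_2$ and $\sigma_{21}(\sigma(r)_2) = \sigma(r)_1$.

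Multiplying the results of Steps 1 and 2 gives $\sigma_{12}(P_1) = P_2$ and $\sigma_{21}(P_2) = P_1$. By linearity of $x\mapsto \sigma_{ji}(x_j) - x_i$, the third condition then holds for $\gamma$ exactly when it holds for $R$.

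The only mildly delicate point is Step 2, namely that $\sigma(r)$ inherits the invariance~\eqref{eq:r-psi-condition}. Here one has to be careful with Koszul signs in the super case, but $\phi,\psi$ are even morphisms in $\cal{C}$, so naturality of $\tau$ handles the signs automatically.
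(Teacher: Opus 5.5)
Your proof is correct and follows the same argument as the paper, which states the lemma as an immediate consequence of~\eqref{eq:r-psi-condition}. The three conditions in~\eqref{eq:gamma-condition} are linear, and the first two hold for $\sigma(r)r$ by hypothesis. The third holds because $\phi\otimes\phi$ and $\psi\otimes\psi$ fix $r$, hence also fix $\sigma(r)$ (by commutativity of $\phi,\psi$ and naturality of $\tau$) and $\sigma(r)r$. That is exactly what your Steps 1 and 2 verify. The sign ambiguity in $\gamma = R\mp\sigma(r)r$ comes from an inconsistency in the paper itself and, as you note, does not affect the argument.
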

In all our examples, the element $\sigma(r)r$ is always $\sigma$-invariant and $\sigma^2$-central, so we will never mention the element $\gamma$ again.

\subsection{Type $A$ affine Hecke algebras}\label{ssec:affine-Hecke}
Let $\cal{C} = \Vec$, $F = \bbk[x]$, so that $A = \bbk[x_1,\ldots, x_n]$, and $\sigma_i$ acts by permutation.
Set $r = \frac{h + qx_1}{x_1-x_2}$, $h,q\in \bbk$, so that $\rho = (h + qx_1)\partial$, where $\partial$ is the Demazure operator 
\[
\partial(f) = \frac{f(x_1,x_2) - f(x_2,x_1)}{x_1-x_2}.
\]
We have $S = r + \sigma(r) = q$, and $D = 0$:
\begin{align*}
	Y(r) = \frac{(h+qx_1)(h+qx_2)((x_1-x_2)- (x_2-x_3) - (x_1-x_3))}{(x_1-x_2)(x_2-x_3)(x_1-x_3)} = 0.
\end{align*}
By \cref{lem:simplify-YYB}, we therefore have $\overline{D} = 0$, and
\begin{align*}
	E = r_{13}(R_2-R_1) = -\partial_{13}(R_1).
\end{align*}
In particular, $E = 0$ if and only if $R\in \bbk$.
In this case, we recover affine Hecke algebra for $h=0, q = \mathbf{q}-1, R=\mathbf{q}$ (after inverting $x$), and its degenerate version for $h = 1, q = 0, R = 1$.

\subsection{Formal Hecke algebras}
A certain generalization of affine Hecke algebras was proposed in~\cite{hoffnung2014formal}.
Recall that a (one-dimensional, commutative) formal group law over a commutative ring $\mathrm{R}$ is a power series $F(u,v)\in (u+v) +uv \mathrm{R}\llbracket u,v\rrbracket$, which satisfies the associativity condition $F(u,F(v,w)) = F(F(u,v),w)$.
For each formal group law, the authors of \textit{loc.~cit.} construct an associated \textit{formal Hecke algebra}, which specializes to affine/degenerate affine Hecke algebra for multiplicative/additive group law.
Instead of recalling the precise definition, let us show how to recover it from \cref{thm:deformed-TIC-PBW}.

Very briefly, one sets $\bbk = \mathrm{R}\llbracket h \rrbracket$, and $F = \bbk\llbracket x \rrbracket$.
For each $v\in \bbZ^n$, the formal group law $F$ gives rise to an element $x_v\in \mathrm{R}\llbracket x_1,\ldots,x_n \rrbracket\subset \bbk\llbracket x_1,\ldots,x_n \rrbracket$, as well as certain elements $\mu(x_\gamma), \mu(x_{-\gamma})\in \bbk$, see~\cite[Sec.~2-3]{hoffnung2014formal} for details.
Let us write $\beta = (1,-1)$, and distinguish the two cases, namely where $\kappa \coloneqq x_\beta - x_{-\beta}$ vanishes or not.
Set $\Theta = \mu(x_\gamma) - \mu(x_{-\gamma})$, $R = \mu(x_\gamma)\mu(x_{-\gamma})$, and
\[
	r\in \bbk\llbracket x_1,x_2\rrbracket, \qquad 
	r = \begin{cases}
		\Theta\frac{x_{-\beta}}{x_\beta + x_{-\beta}}, & \kappa\neq 0,\\
		\frac{2h}{x_{\beta}}, & \kappa= 0;
	\end{cases}
\]
As $\sigma(x_\beta) = x_{-\beta}$, we have $S=\Theta$ if $\kappa \neq 0$ and $S = 0$ otherwise.
Since the ring $F^{\otimes n}$ is commutative and $R \in \bbk$, we have $E = B(r) = r_{13}(R_1-R_2) = 0$.
Furthermore,
\[
	D = r_{13}(r_{23}-r_{21}) - r_{12}r_{23} = - \overline{D}. 
\]
This coefficient is precisely the linear term $\sigma_{12}$ in the cubic relation~\cite[Th.~8.14]{hoffnung2014formal}.
Hence the coefficients imposed by \cref{thm:deformed-TIC-PBW} recover the defining relations of formal affine Hecke algebra of type $A$ as listed in~\cite[Th.~8.14]{hoffnung2014formal}.

\subsection{Odd nilHecke}\label{ssec:odd-Hecke}
Let $\cal{C} = \sVec$, and set $F = \bbk[x]$, where $x$ is an \textit{odd} variable.
Then $A$ is the ring of odd polynomials
\[
	A = \bbk\langle x_1,\ldots, x_n\rangle / (x_ix_j + x_jx_i,\enspace i\neq j).
\]
Let $H\in \sVec$ be the odd invertible object, so that $\epsilon = -1$.
The action of $\sigma_i$ on $A$ is given by signed transposition, that is $\sigma_i(x_j) = -x_j$ if $j\not \in \{i,i+1\}$, and $\sigma_i(x_i) = x_{i+1}$.
Note that any polynomial in $x_i^2$, $1\leq i\leq n$ belongs to the center of $A$, and set
\[
	r = \frac{x_1-x_2}{x_1^2-x_2^2}.
\]
It is easy to check that the resulting $\rho_r$ is precisely the odd divided difference operator as defined in~\cite{ellis2014odd}.
We have $S = 0$, and $D = 0$ by a direct check.
By \cref{lem:simplify-YYB}, this implies $\overline{D} = 0$, and $E = r_{13}(R_2-R_1)$.
Similarly to \cref{ssec:affine-Hecke}, we have $E = 0$ if and only if $R\in \bbk$; for $R = 0$ we recover the odd nilHecke algebra, as considered in~\cite{ellis2014odd}.

\subsection{RAFH and PQWP}\label{ssec:PQWP}
Let $\cal{C} = \Vec$, $F'$ a finite-dimensional (unital) algebra, $F = F'[x]$, and pick a $\sigma$-invariant, $\sigma$-central element $\Delta\in F'\otimes F'$.
Note that when $\Delta$ is non-degenerate, its dual $\Delta^\vee\in (F'\otimes F')^\vee$ defines a symmetric Frobenius algebra structure on $F'$. 
Set
\begin{equation}\label{eq:r-mat-RAFH}
	r = \frac{\Delta(h+qx_1)}{x_1-x_2}, \qquad h,q\in \bbk.	
\end{equation}
It is easy to check that for $a\in F'\otimes F'$, $f\in \bbk[x_1,x_2]$ we have $\rho(a\otimes f) = \Delta a (h+qx_1)\partial(f)$.
We have $S = q\Delta$.
Note that $\Delta_{12}\Delta_{23} = \Delta_{23}\Delta_{13} = \Delta_{13}\Delta_{12}$ and $\Delta_{23}\Delta_{12} = \Delta_{12}\Delta_{13} = \Delta_{13}\Delta_{23}$ by the properties of $\Delta$.
Factoring out $\Delta_{12}\Delta_{23}$ and $\Delta_{23}\Delta_{12}$, the exact same computation as in \cref{ssec:affine-Hecke} shows that $D = \overline{D} = 0$.
Analogously, we have $E = -r_{13}(R_1-R_2)$.
If $R\in F'\otimes F'$, then $\Delta_{13}R_{23} = R_{32}\Delta_{13}$, and so we have 
\[
	r_{13}(R_1-R_2) = r_{13}R_1 - R_{21}r_{13} = r_{13}(R_1 - R_{21}) = 0.
\]
This recovers RAFH algebras as considered in~\cite{MS}, which for $q=0$, resp. $h=0$ specialize to affine wreath product algebras in~\cite{Sav_AWPA2018}, resp.~\cite{rosso2020quantum}.

One can slightly generalize this definition by keeping $R\in F'\otimes F'$, and setting 
\begin{equation}\label{eq:r-PQWP}
	r = \frac{\Delta^{00}+ \Delta^{10}x_1 + \Delta^{01}x_2 + \Delta^{11}x_1x_2}{x_1-x_2}, 
\end{equation}
where $\Delta^{ij}\in F'\otimes F'$ are $\sigma$-invariant, $\sigma$-central elements, possibly degenerate.
Here, $S = \Delta^{10}-\Delta^{01}$.
A direct (if lengthy) computation shows that $E = B(r) = 0$, whereas $\overline{D} = -D = \Delta^{00}_1\Delta^{11}_2 - \Delta^{01}_1\Delta^{10}_2$.
When the latter expression vanishes, we recover the basis theorem for PQWP algebras in~\cite[Prop.~3.10]{lai2025schurification}.

\subsection{Quiver Hecke algebras}\label{ssec:KLR}
Let $I$ be a finite set, $F' = \bigoplus_{i\in I} \bbk e_i$ a direct sum of one-dimensional algebras, where $e_i$, $i\in I$ are idempotents, and $F = F'[x]$.
Set
\[
	r = \sum_{i\in I} \frac{e_i\otimes e_i}{x_1-x_2},
\]
and pick a $\sigma$-invariant element $R\in F\otimes F$.
More explicitly, it means that $R = \sum_{i,j\in I} Q^{ij}(x_1,x_2) e_i\otimes e_j$, where $Q^{ij}\in \bbk[x_1,x_2]$, and $Q^{ij}(x_1,x_2) = Q^{ji}(x_2,x_1)$ for all $i,j\in I$.
We clearly have $S = Y(r) = \overline{Y}(r) = 0$.
Furthermore, as $R\in F\otimes F$ is symmetric, we have
\begin{equation}\label{eq:cst-term-KLR}
	B(r) = r_{13}(R_2-R_1) = \sum_{i,j\in I} \partial_{13} (Q^{ij})(e_i\otimes e_j\otimes e_i),
\end{equation}
where $\partial_{13}(f) = \frac{f(x_1,x_2) - f(x_3,x_2)}{x_1-x_3}$ is the Demazure operator.
In case where the elements $Q^{ij}$ satisfy some additional conditions (see e.g.~\cite[Sec.~2]{foldingKLR}), which guarantee the existence of a convenient grading, the defining relations of the TIC algebra $F\wr \cal{H}_n$ hence coincide with the defining relations of quiver Hecke algebra associated to $Q$; the set of vertices of the corresponding quiver iss $I$, and the arrows from $i$ to $j$ are read off the polynomial $Q^{ij}$.
In particular, \cref{thm:deformed-TIC-PBW} recovers the basis theorem for quiver Hecke algebras~\cite[Th.~2.5]{KL_DACQ2009}.

Building up on \cref{ssec:affine-Hecke,ssec:PQWP}, the definition above admits some natural generalizations.

\subsubsection{$K$-theoretic quiver Hecke algebras}
Let $F'$ be as in \cref{ssec:KLR}, $F = F'[x^{\pm 1}]$, $R\in (F\otimes F)^{\fkS_2}$, and
\[
	r = \sum_{i\in I} \frac{(e_i\otimes e_i)x_1}{x_1-x_2}.
\]
Similarly to before, we have $S=1$, $D = \overline{D} = 0$, and
\[
	E = B(r) = r_{13}(R_2-R_1) = -\sum_{i,j\in I} x_1\partial_{13} (Q^{ij})(e_i\otimes e_j\otimes e_i),
\]
For example, let us set $I = [1,n]$, $Q^{ij} = 0$ if $|i-j|\neq 1$, and $Q^{i,i+1}(x_1,x_2) = Q^{i+1,i}(x_2,x_1) = x_1x_2^{-1}-1$ for $1\leq i\leq n-1$.
In this case, all coefficients in $E = \sum_{i,j} E_{iji}(e_i\otimes e_j\otimes e_i)$ vanish unless $|i-j|= 1$, and 
\begin{align*}
	E_{i,i+1,i} &= x_1\partial_{13}(1-x_1x_2^{-1}) = -x_1x_2^{-1}\partial_{13}(x_1) = -x_1x_2^{-1};\\
	E_{i+1,i,i+1} &= x_1\partial_{13}(1-x_2x_1^{-1}) = -x_1x_2\partial_{13}(x_1^{-1}) = x_2x_3^{-1}.
\end{align*}
Hence, we recover precisely the relations of the $K$-theoretic quiver Hecke algebra for the quiver $A_n$, as computed in \cite[Sec.~6.2.2]{Zhou23master}.
In particular, we deduce from \cref{thm:deformed-TIC-PBW} that these algebras have a diagrammatic basis akin to the usual quiver Hecke algebras.
Note that for the $A_n$-quiver, the existence of basis also follows from the geometric realization of $K$-theoretic quiver Hecke algebra as the $K$-theory of quiver Steinberg; in the non-symmetric case, this argument will not apply.

\subsubsection{Frobenius quiver Hecke algebras}\label{ssec:Frob-KLR}
Let $\{F'_i\}_{i\in I}$ be a collection of finite-dimensional symmetric Frobenius algebras, and $\Delta^i\in F'_i\otimes F'_i$ the Frobenius elements.
Note that $\Delta \coloneqq \sum_{i\in I}\Delta^i$ is a Frobenius element for $F' \coloneqq \bigoplus_{i\in I} F'_i$.
Define $F = F'[x]$, and set
\[
	r = \frac{\Delta}{x_1-x_2} = \sum_{i\in I} \frac{\Delta'_i}{x_1-x_2} \in F\otimes F.
\]
Pick $R\in F\otimes F$ central and symmetric.
Note that being central is now a non-trivial condition; unraveling it, we see that for each $i,j\in I$ we have
\[
	Q^{ij} = \sum_{k,l} f^{kl}_{ij} x_1^kx_2^l,\qquad f^{kl}_{ij}\in Z(F'_i\otimes F'_j).
\]
Analogously to \cref{ssec:PQWP}, we have $S = D = \overline{D} = 0$, and $E = r_{13}(R_2-R_1)$.
Using the special form of $R$, we see that $E = \sum_{i,j} E_{iji}(e_i\otimes e_j\otimes e_i)$, where
\[
	E_{iji} = \frac{\Delta^i_{13}}{x_3-x_1}\left(\sum_{k,l} (f^{kl}_{ij}\otimes e_i) x_1^kx_2^l - (e_i\otimes f^{kl}_{ji})x_3^kx_2^l\right)
	= -\sum_{k,l} \Delta^i_{13}(f^{kl}_{ij}\otimes e_i) \partial_{13}(x_1^k)x_2^l \in F^{\otimes 3}.
\]
Hence \cref{lem:localize} continues to apply, and we get a basis theorem for the resulting TIC algebra.
Note that it can be equipped with a grading analogous to the usual quiver Hecke algebra, where $F'$ sits in degree $0$.
It would be very interesting to see whether these algebras categorify some variant of quantum groups.

\subsection{Super-versions of RAFH}\label{ssec:PQWP-super}
Let $\cal{C} = \sVec$, $F'$ an associative superalgebra, and $F = F'[x]$.
The invertible object $H$ is either even or odd.
Regardless, the element $R\in (F^{\otimes 2})_{H^2}$ is always even.
We can extend the setup of the previous section in two ways, depending on whether $x$ is an even or an odd variable.

\subsubsection{Even case}
Let $x$ be even, and let $\Delta^0, \Delta^1\in F'\otimes F'$ be two $\sigma$-invariant, $\sigma$-central elements of same parity as $H$.
Set
\[
	r = \frac{\Delta^0+\Delta^1x_1}{x_1-x_2}.
\]
Then $S = \Delta^1$, and the same argument as in \cref{ssec:PQWP} shows that $D = \overline{D} = E = 0$, provided that $R\in F'\otimes F'$ is $\sigma$-invariant and central.
We leave the analysis for the more general choice~\eqref{eq:r-PQWP} to the interested reader.

\subsubsection{Odd case}
Let $x$ be odd, and let $\Delta\in F'\otimes F'$ be a $\sigma$-invariant, $\sigma$-central elements of parity opposite to $H$.
Set 
\[
	r = \frac{\Delta(x_1-x_2)}{x_1^2-x_2^2}.
\]
Then $S = 0$, and once again $D = \overline{D} = E = 0$.
This gives a precise sense to the ``odd wreath product algebras'' in~\cite[Rem.~3.3]{Sav_AWPA2018} for symmetric Frobenius algebras.

\begin{rmk}\label{rmk:super-KLR}
Quiver Hecke superalgebras~\cite{superKLR} do not immediately fit our theory, because our crossings $H_i$ have fixed parity, while the parity of crossings in quiver Hecke superalgebras depends on which strands are being crossed.
This can be remedied this by dropping the invertibility condition on $H$, and setting $H = \bbk^{1|0}\oplus \bbk^{0|1}$.
Roughly speaking, we can define $\sigma$ by asking that for each pair of colours only the summand of $H$ of appropriate parity acts by swapping the tensor factors in $F'\otimes F'$, and the other summand acts by zero.
We then put polynomials of appropriate parity at each vertex. 
The resulting QWP is too big; however, it will contain a big annihilator, and after factoring it out one obtains the desired quiver Hecke superalgebra.
As QWP algebras in question are not TIC algebras in the sense of \cref{defn:TIC}, we leave the details to an interested reader.
\end{rmk}

\subsection{Twisted polynomial rings}\label{ssec:ns-Frob}
Let us consider one important case where $\sigma$ is different from the natural braiding $\tau$.
Set $\cal{C} = \Vec$ or $\sVec$, $F'\in \cal{C}$ an algebra, and fix an automorphism $\psi$ of $F'$ \textit{of finite order $\theta$}.
Define $F = \bbk[x]\ltimes F'$, for $x$ an even variable.
The semidirect product notation here means that $F = \bbk[x]\otimes F'$ as an element of $\vec{\cal{C}}$, and 
\[
	(x^k\otimes f_1)(x^l\otimes f_2) = x^{k+l}\otimes ((\psi')^{-l}(\tau(f_1))f_2); \qquad k,l\in \bbN, \quad f_1,f_2\in F'.
\]
Note that $\psi$ extends to an automorphism of $F$ by setting $\psi(x) = x$.
Since $\psi^\theta = \id$, the subalgebra $\bbk[x^\theta]\subset F$ belongs to the center.
Define $\tau^\psi\coloneqq (\id \otimes \psi)\circ \tau = \tau \circ (\psi \otimes \id)$, and pick a $\tau^\psi$-invariant, $\tau^\psi$-central element $\Delta\in F'\otimes F'$.
Unpacking these conditions, we have 
\[
	\psi_1(\Delta) = \tau(\Delta); \qquad \Delta (f_1\otimes f_2) = \tau(\psi(f_1)\otimes f_2)\Delta, \quad f_1,f_2\in F'.
\]
For example, if $F'$ is a non-symmetric Frobenius algebra and $\psi$ its Nakayama automorphism, we can set $\Delta$ to be the dual of the trace form.

\begin{rmk}
	One may wonder whether we could start with an \textit{odd} automorphism $\psi:\bbk^{0|1}\otimes F' \to F'$.
	In fact, we immediately run into trouble, as the two sides of the relation $fx = x\psi^{-1}(f)$ have different parity, regardless of the parity of $x$; compare this with~\cite[Rem.~3.3]{Sav_AWPA2018}.
	This also obliges us to restrict our attention to Frobenius algebras with even trace form. 
\end{rmk}

Let us define the following elements of $F\otimes F$, observing that $\Delta^{(1)} = \Delta$:
\begin{equation}\label{eq:Delta-k}
	\Delta^{(k)}\coloneqq \sum_{i=0}^{k-1} x_2^{k-1-i}\Delta x_1^i \in F\otimes F.
\end{equation}
\begin{lem}\label{lem:Delta-k-props}
	The elements $\Delta^{(k)}$ satisfy the following properties:
	\begin{enumerate}
		\item For any $f_1,f_2\in F'$, we have $\Delta^{(k)}(f_1\otimes f_2) = \tau(\psi^k(f_1)\otimes f_2)\Delta^{(k)}$;
		\item We have $\Delta^{(k)}x_1 - x_2\Delta^{(k)} = \Delta x^k_1 - x^k_2\Delta$ and $\Delta^{(k)}x_2 - x_1\Delta^{(k)} = x^k_2 \tau(\Delta) - \tau(\Delta) x^k_1$;
		\item For any $k\geq 1$, we have $(\psi\otimes \psi)(\Delta^{(k)}) = \Delta^{(k)}$.
	\end{enumerate}
	In particular, for $k=\theta$ we have 
	\begin{equation}\label{eq:Delta-theta}
	\begin{gathered}
		\Delta^{(\theta)}f = \tau(f)\Delta^{(\theta)}, \quad f\in F'\otimes F';\\
		\Delta^{(\theta)}x_1 - x_2\Delta^{(\theta)} = \Delta (x_1^\theta - x_2^\theta),\qquad \Delta^{(\theta)}x_2 - x_1\Delta^{(\theta)} = \tau(\Delta) (x_2^\theta - x_1^\theta).
	\end{gathered}
	\end{equation}
\end{lem}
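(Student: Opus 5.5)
The plan is to verify each of the three properties by a direct computation, using only the commutation rules in $F=\bbk[x]\ltimes F'$ and the two conditions on $\Delta$. First I record the elementary rules. The multiplication rule gives $fx = x\psi^{-1}(f)$, equivalently $xf=\psi(f)x$ for $f\in F'$. Since $x$ is even, $x_1$ and $x_2$ commute with each other, and for $a\in F'\otimes F'$ we get
\[
	x_1 a = \psi_1(a)x_1, \qquad x_2 a = \psi_2(a)x_2 ,
\]
where $\psi_1=\psi\otimes\id$ and $\psi_2=\id\otimes\psi$. Next I unpack the hypotheses on $\Delta$. The condition $\tau^\psi(\Delta)=\Delta$ with $\tau^\psi=(\id\otimes\psi)\circ\tau$ gives $\tau(\Delta)=\psi_2^{-1}(\Delta)$. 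The stated consequence $\psi_1(\Delta)=\tau(\Delta)$ then yields $\psi_1\psi_2(\Delta)=\Delta$.

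For (1), I handle one summand $x_2^{k-1-i}\Delta x_1^i$ at a time, moving $f_1\otimes f_2$ leftwards in three steps. First, $x_1^i(f_1\otimes f_2)=(\psi^i(f_1)\otimes f_2)x_1^i$. Second, the $\tau^\psi$-centrality of $\Delta$ turns $\Delta(\psi^i(f_1)\otimes f_2)$ into $\tau(\psi^{i+1}(f_1)\otimes f_2)\Delta$; this places $\psi^{i+1}(f_1)$ in the second tensor factor. Third, passing $x_2^{k-1-i}$ raises that exponent to $\psi^{k}$ and leaves $f_2$ untouched. The result $\tau(\psi^k(f_1)\otimes f_2)$ is independent of $i$, so summing over $i$ gives (1). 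In the super case the Koszul signs should all be absorbed into $\tau$, since $x$ is even and $\Delta$ is even (forced by the trace form being even). Checking this is the one point where I expect bookkeeping care. I would treat it by working with homogeneous $f_1,f_2$ and noting that all signs come from the single application of $\tau$.

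For (2), both identities are telescoping sums. The first is immediate:
\[
	\Delta^{(k)}x_1 - x_2\Delta^{(k)}=\sum_{i=0}^{k-1}x_2^{k-1-i}\Delta x_1^{i+1}-\sum_{i=0}^{k-1}x_2^{k-i}\Delta x_1^{i},
\]
and this collapses to $\Delta x_1^k - x_2^k\Delta$. For the second identity I first move the extra $x$ across $\Delta$. On one side, $\Delta x_2 = x_2\psi_2^{-1}(\Delta)=x_2\tau(\Delta)$. On the other, $x_1\Delta=\psi_1(\Delta)x_1=\tau(\Delta)x_1$. After these substitutions $\Delta^{(k)}x_2=\sum_i x_2^{k-i}\tau(\Delta)x_1^i$ and $x_1\Delta^{(k)}=\sum_i x_2^{k-1-i}\tau(\Delta)x_1^{i+1}$, which telescope to $x_2^k\tau(\Delta)-\tau(\Delta)x_1^k$.

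For (3), $\psi$ fixes $x$, and $\psi\otimes\psi=\psi_1\psi_2$ fixes $\Delta$ as shown above. Hence $\psi\otimes\psi$ fixes every summand of $\Delta^{(k)}$.

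Finally, the displayed consequences for $k=\theta$ follow by setting $\psi^\theta=\id$ in (1). In (2), $x^\theta$ is central, so $\Delta x_1^\theta - x_2^\theta\Delta=\Delta(x_1^\theta-x_2^\theta)$, and similarly for the $\tau(\Delta)$ identity.
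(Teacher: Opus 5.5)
Your proposal is correct and is essentially the paper's own argument. It uses the same summand-by-summand commutation for (1), the same cancellation for (2), and summand-wise invariance via $\psi_1(\Delta)=\tau(\Delta)=\psi_2^{-1}(\Delta)$ for (3); the paper merely packages your telescoping as $\Delta^{(k)}x_1 - x_2\Delta^{(k)} = (\Delta^{(k+1)}-x_2^k\Delta)-(\Delta^{(k+1)}-\Delta x_1^k)$, and analogously with $\psi_2^{-1}$ and $\psi_1$ applied for the second identity. One minor remark: the sign bookkeeping does not need $\Delta$ to be even, since $x$ and $\psi$ are even and the only sign-producing step is the single use of the centrality relation for $\Delta$.
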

\begin{proof}
	The proof is by computation. For the first claim, for any $0\leq i\leq k-1$ we have
	\begin{align*}
		x_2^{k-1-i}\Delta x_1^i(f_1\otimes f_2)
		&= x_2^{k-1-i}\Delta (\psi^i(f_1)\otimes f_2)x_1^i
		= x_2^{k-1-i} \tau(\psi^{i+1}(f_1)\otimes f_2)\Delta x_1^i\\
		&= \tau(\psi^{k}(f_1)\otimes f_2)x_2^{k-1-i} \Delta x_1^i,
	\end{align*}
	and so the desired formula follows after summation.
	For the second claim, we have
	\begin{align*}
		\Delta^{(k)}x_1 - x_2\Delta^{(k)} &= (\Delta^{(k+1)}-x^k_2\Delta) - (\Delta^{(k+1)}-\Delta x^k_1) 
		= \Delta x^k_1 - x^k_2\Delta,\\
		\Delta^{(k)}x_2 - x_1\Delta^{(k)} &= \psi_2^{-1}(\Delta^{(k+1)}-\Delta x^k_1) - \psi_1(\Delta^{(k+1)}-x^k_2 \Delta)
		= x^k_2 \tau(\Delta) - \tau(\Delta) x^k_1,
	\end{align*}
	where in the second chain of equalities we used $\psi_1(\Delta) = \tau(\Delta) = \psi_2^{-1}(\Delta)$.
	Finally, for the last claim we note that
	\[
		\psi_2\psi_1(\Delta) = \psi_2\tau(\Delta) = \tau\psi_1(\Delta) = \tau^2(\Delta) = \Delta,
	\]
	so that each term of $\Delta^{(k)}$ is $\psi\otimes \psi$-invariant.
\end{proof}

Unlike the case of RAFH algebras, we cannot set $h$ and $q$ in~\eqref{eq:r-mat-RAFH} to be simultaneously non-zero, and so need to consider two separate cases.

\subsubsection{Additive case}\label{subs:add-case}
Let us set $\sigma = \tau = \tau_{F,F}$, and $\rho = \rho_r$, where
\begin{equation}\label{eq:r-mat-add-ns}
	r = \frac{\Delta^{(\theta)}}{x_1^\theta- x_2^\theta}.
\end{equation}
It follows from \cref{lem:Delta-k-props} that $\rho(f) = 0$ for $f\in F'\otimes F'$, and $\rho(x_1) = \Delta$, $\rho(x_2) = -\tau(\Delta)$.
As a consequence, $\rho_r$ restricts to a skew derivation of $F\otimes F$; in fact, this is the exact same skew derivation as in~\cite[Sec.~4.1]{Sav_AWPA2018}.
We are hence in the situation of \cref{lem:localize}.
Note that 
\begin{align*}
	\tau(\Delta^{(\theta)}) &= \sum_{i=0}^{\theta-1} x_1^{\theta-1-i}\tau(\Delta) x_2^i
	= \sum_{i=0}^{\theta-1} x_2^i \psi_2^{-i}\psi_1^{\theta-1-i}\tau(\Delta) x_1^{\theta-1-i}
	= \sum_{i=0}^{\theta-1} x_2^i (\psi_1\psi_2)^{-i}(\Delta) x_1^{\theta-1-i}
	\\&= \Delta^{(\theta)}.
\end{align*}
In particular, $S = r+\tau(r) = \frac{\Delta^{(\theta)} - \tau(\Delta^{(\theta)})}{x_1^\theta- x_2^\theta} = 0$.
Furthermore, it follows from the formulas~\eqref{eq:Delta-theta} that $\tau(r)r$ is $\tau$-invariant and ($\tau^2$-)central, so that the same is true of $R$.
We further assume that $R\in F'\otimes F'$.
The proof of the following is by direct computation, see \cref{app:AYB-nonsymm}.
\begin{prop}\label{prop:YB-ns}
	For the element $r$ given by~\eqref{eq:r-mat-add-ns} we have $Y(r) = \overline{Y}(r) = B(r) = 0$.\qed
\end{prop}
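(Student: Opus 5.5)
We plan a direct computation in the localization $A_\loc$, organized so that it reduces to the symmetric RAFH computation of \cref{ssec:PQWP}. Set $P \coloneqq x^\theta$, so $P_i = x_i^\theta$ are central in $A$ and fixed by $\tau$-permutations. Write $r_{ij}$ for the copy of $r$ in factors $i,j$, so that $r_1 = r_{12}$, $r_2 = r_{23}$, $\sigma_1(r_2) = r_{13}$, $\sigma_2(r_1) = r_{13}$ (up to the $\psi$-twists in the semidirect product, which we track via \cref{lem:Delta-k-props}). Since $S = 0$ and $\tau(r) = -r$ (from $\tau(\Delta^{(\theta)}) = \Delta^{(\theta)}$), it suffices to prove $Y(r) = 0$. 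Indeed, the proof of \cref{lem:simplify-YYB} then gives $\overline{Y}(r) = 0$. For $B(r)$ we need $\tau(r)r$ central, which we already know from~\eqref{eq:Delta-theta}, together with $\sigma_1(r_2) = \sigma_2(r_1)$, which holds for the natural braiding. We would then obtain $B(r) = r_{13}(R_2 - R_1)$. This vanishes for $R\in F'\otimes F'$ central and $\tau$-invariant, by the same argument as in \cref{ssec:PQWP}: $r_{13}R_{23} = R_{21}r_{13}$. Here one must use the twisted centrality $\Delta^{(\theta)} f = \tau(f)\Delta^{(\theta)}$ from \cref{lem:Delta-k-props}(1) with $k=\theta$, and $\psi^\theta = \id$.

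The main step is $Y(r) = r_{12}r_{23} + r_{13}r_{21}... = 0$, i.e.\ $r_{12}r_{23} - r_{23}r_{13} - r_{13}r_{12}\cdot(\pm)$, written in the form $r_{12}r_{23} = r_{23}r_{13} + r_{13}r_{12}$ using $\tau(r) = -r$. Clearing the central denominators $(P_1-P_2)(P_2-P_3)(P_1-P_3)$, this becomes a polynomial identity among the numerators $\Delta^{(\theta)}_{12}, \Delta^{(\theta)}_{23}, \Delta^{(\theta)}_{13}$:
\[
(P_1-P_3)\Delta^{(\theta)}_{12}\Delta^{(\theta)}_{23} = (P_1-P_2)\Delta^{(\theta)}_{23}\Delta^{(\theta)}_{13} + (P_2-P_3)\Delta^{(\theta)}_{13}\Delta^{(\theta)}_{12}.
\]
To prove it, we first establish the "Frobenius" identities $\Delta_{12}\Delta_{23} = \Delta_{23}\Delta_{13} = \Delta_{13}\Delta_{12}$ for $\Delta$ itself, twisted appropriately by $\psi$. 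These come from $\tau^\psi$-centrality and invariance, exactly as in \cref{ssec:PQWP}. We then expand each $\Delta^{(\theta)}$ as $\sum_i x^{\theta-1-i}\Delta x^i$ and move all powers of $x$ to the outside, picking up $\psi$-powers on $\Delta$'s via the commutation rule $f x = x\psi^{-1}(f)$. The $\psi$-powers should collapse by \cref{lem:Delta-k-props}(3) and $\psi_1\psi_2(\Delta)=\Delta$. After that, all three products share a common factor $\Delta_{12}\Delta_{23}$ multiplied by a commutative polynomial in $x_1,x_2,x_3$. We expect the identity to reduce to the scalar telescoping identity
\[
(a-c)\frac{a^\theta-b^\theta}{a-b}\cdot\frac{b^\theta-c^\theta}{b-c}\text{-type sums}
\]
behind the affine Hecke computation in \cref{ssec:affine-Hecke}.

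An alternative, possibly cleaner, route is to use the second part of \cref{lem:Delta-k-props} and~\eqref{eq:Delta-theta}, which give $\rho(x_1)=\Delta$ and $\rho(f)=0$ on $F'$. Since $Y(r)$ encodes the coefficient $D$, and by \cref{thm:deformed-TIC-PBW} that coefficient is determined by the algebra, we may instead verify that $\rho$ agrees with the known skew derivation of~\cite[Sec.~4.1]{Sav_AWPA2018}, whose algebra has a PBW basis with undeformed braid relation. However, \cref{thm:deformed-TIC-PBW} only gives the forward implication once a basis is known, and importing it would be circular unless the source covers non-symmetric $F'$. So we prefer the direct computation.

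The main obstacle is the bookkeeping of $\psi$-twists when commuting $x_j$ past $\Delta_{ik}$ in the three-factor products. We would organize this by first proving a lemma: for all $a,b$, the element $x_1^a x_2^b x_3^c \Delta_{12}\Delta_{23}$ can be rewritten in each of the other two orderings with the same monomial. This step is verified degree by degree using \cref{lem:Delta-k-props}(1),(3). Once this lemma holds, everything is commutative and the identity follows from the additive-case scalar computation.
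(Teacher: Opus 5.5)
Your reductions match the paper's proof. The cleared-denominator identity is exactly the one checked there. Obtaining $\overline{Y}(r)=0$ and $B(r)=r_{13}(R_2-R_1)$ from the computation in the proof of \cref{lem:simplify-YYB}, and then killing $r_{13}(R_2-R_1)$ via \cref{lem:Delta-k-props}(1) at $k=\theta$ together with $\tau$-invariance and centrality of $R$, is what the paper does.

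The gap is in the one step with real content: the $\psi$-bookkeeping for $Y(r)=0$. The $\psi$-powers do \emph{not} collapse, and the three products do not become a common factor $\Omega\coloneqq\Delta_{12}\Delta_{23}$ times a commutative polynomial. Moving all $x$'s to the right, the $(i,j)$ summand of $\Delta^{(\theta)}_{12}\Delta^{(\theta)}_{23}$ becomes $\psi_2^{-1-i}\psi_3^{-1-j}(\Omega)\,x_1^ix_2^{\theta-1-i+j}x_3^{\theta-1-j}$. So the summands $(\theta-1,\theta-1)$ and $(0,0)$ carry $\Omega$ and $\psi_1(\Omega)=\tau(\Delta)_{12}\Delta_{23}$ respectively, and these differ in general in the non-symmetric case. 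Your proposed lemma, that $x_1^ax_2^bx_3^c\,\Omega$ is the same in all three orderings, is only the untwisted identity $\Delta_{12}\Delta_{23}=\Delta_{13}\Delta_{12}=\Delta_{23}\Delta_{13}$ multiplied by a monomial. It says nothing about these twists, so ``everything is commutative'' does not follow from it.

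What you need is the statement that the twist depends only on the monomial. Use $(\psi\otimes\psi)(\Delta)=\Delta$ to merge the factorwise twists into a single automorphism $\psi_1^a\psi_2^b\psi_3^c$ of the product. This is well defined modulo $\psi_1\psi_2\psi_3$, which fixes each of $\Omega_1\coloneqq\Delta_{12}\Delta_{23}$, $\Omega_2\coloneqq\Delta_{13}\Delta_{12}$, $\Omega_3\coloneqq\Delta_{23}\Delta_{13}$. One then checks, on the three parametrizations of the double sums, that every summand of each expanded product with monomial $x_1^{e_1}x_2^{e_2}x_3^{e_3}$ (placed on the right) equals $\psi_1^{1+e_1}\psi_3^{-1-e_2}(\Omega_k)\,x_1^{e_1}x_2^{e_2}x_3^{e_3}$. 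The central factors $x_k^\theta$ do not affect this, and it is exactly why the paper's regrouped sums carry common twists such as $\psi_1^{i+1}\psi_3^{-j-1}$.

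With that in hand, $\Omega_1=\Omega_2=\Omega_3$ makes the coefficient of each monomial an integer multiple of a single element. That multiple is the corresponding coefficient of $h_{12}h_{23}h_{13}\big((x_1-x_3)-(x_2-x_3)-(x_1-x_2)\big)=0$, where $h_{kl}=\sum_{m=0}^{\theta-1}x_l^{\theta-1-m}x_k^m$. So your route goes through once this statement is proved. It is then a tidier version of the paper's argument, which pairs the sums by hand for monomials of $x_1$-degree at least $\theta$ and treats the remaining monomials ``analogously''.
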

As a consequence, we recover the basis theorem in~\cite[Th.~4.6]{Sav_AWPA2018} for $\cal{C} = \Vec$, and extends it to super-vector spaces.

\subsubsection{Multiplicative case}\label{subs:mult-case}
Now, let $\sigma$ be the twisted natural braiding $\tau^\psi = (\id \otimes \psi)\circ \tau$, and
\begin{equation}\label{eq:ns-r-mult}
	r = \frac{\Delta^{(\theta)}x_1}{x_1^\theta- x_2^\theta}.
\end{equation}
Since $\psi$ acts trivially on $x_i$'s, we have $\rho_r(x_1) = \Delta x_1$, $\rho_r(x_2) = -\tau(\Delta)x_1$ similarly to the additive case.
Furthermore, for any $f\in F'\otimes F'$ we have
\begin{equation}\label{eq:rho-restr-van}
	\rho_r(f) = \frac{\Delta^{(\theta)}x_1 f - \psi_2\tau(f)\Delta^{(\theta)}x_1}{x_1^\theta- x_2^\theta}
	= \frac{\tau\psi_1(f)\Delta^{(\theta)}x_1  - \psi_2\tau(f)\Delta^{(\theta)}x_1}{x_1^\theta- x_2^\theta} = 0.
\end{equation}
Hence $\rho_r$ restricts to a skew derivation of $F\otimes F$, and we can once again apply \cref{lem:localize}.
We have
\[
	S = r+\sigma(r) = \frac{\Delta^{(\theta)}x_1 - \psi_2\tau(\Delta^{(\theta)})x_2}{x_1^\theta- x_2^\theta}
	= \frac{\Delta^{(\theta)}x_1 - x_2\Delta^{(\theta)}}{x_1^\theta- x_2^\theta} = \Delta.
\]
Note that $\Delta^{(\theta)}x_1\Delta = \sigma(\Delta) \Delta^{(\theta)}x_1 = \Delta \Delta^{(\theta)}x_1$, so that $r\Delta = \Delta r$.
This implies that $\sigma(r)r$ is $\sigma$-invariant:
\[
	\sigma(\sigma(r)r) = \sigma((\Delta - r)r) = (\Delta - \sigma(r))\sigma(r) = r(\Delta - r) = (\Delta - r)r = \sigma(r)r. 
\]
It is also easy to check that $\sigma(r)r$ is $\sigma^2$-central.
Indeed, for $x_1, x_2$ this immediately follows from the $\tau^2$-centrality of $\sigma(rx_1^{-1})rx_1^{-1}$ in the additive case, while for $f\in F'\otimes F'$ we have
\[
	\sigma(r)r f = \frac{x_2(\Delta^{(\theta)})^2x_1f}{x_1^\theta- x_2^\theta}
	= \frac{\psi_2\tau^2\psi_1(f)x_2(\Delta^{(\theta)})^2x_1}{x_1^\theta- x_2^\theta}
	= \sigma^2(f) \sigma(r)r.
\]
As always, the element $R$ is hence $\sigma$-invariant and $\sigma^2$-central.
Assume that $R\in F'\otimes F'$.
Let us write $r = r'x_1$, where $r'$ is given by~\eqref{eq:r-mat-add-ns}.
Using \cref{prop:YB-ns}, we have
\begin{align*}
	Y(r) 
	&= r_1r_2 + \sigma_1(r_2r_1) - r_2\sigma_1(r_2)
	= r'_1\psi_1(r'_2)x_1x_2 + \sigma_1(r'_2)\psi_1\sigma_1(r'_1)x_1x_2 - r'_2\psi_2\sigma_1(r'_2)x_1x_2 \\
	&= (r'_1 r'_2 + \tau_1(r'_2) \tau_1\psi_1\psi_2(r'_1) - r'_2\tau_1(r'_2))x_1x_2 
	= (r'_1 r'_2 + \tau_1(r'_2r'_1) - r'_2\tau_1(r'_2))x_1x_2 = 0.
\end{align*}
The equality $\overline{Y}(r) = 0$ is proved analogously.
Using the special form of $R$, we also have
\begin{align*}
	r_2\sigma_1(r_2)r_1 &- r_1\sigma_2(r_1)r_2 
	= (r'_2\tau_1\psi_1^2(r'_2)\psi_1\psi_2(r'_1) - r'_1\tau_2\psi_1\psi_2(r'_1)\psi_1^2(r_2))x_1^2x_2\\
	&= (r'_2\tau_1(r'_2)r'_1 - r'_1\tau_2(r'_1)r'_2)x_1^2x_2 = 0,\\
	\sigma_2(r_1)R_2 
	&= \sigma_2(r'_1)x_1R_2  
	= R_2\sigma^{-1}_2(r'_1)x_1
	= \tau_2(\tau_2(R_2)r'_1)x_1
	= \tau_2(r'_1)\tau_{212}(R_2)x_1\\
	&= \tau_1(r'_2)\tau_1(R_1)x_1
	= \sigma_1(r'_2)x_1\sigma_1^{-1}(R_1)
	= \sigma_1(r_2)R_1,
\end{align*}
so that $B(r) = 0$ as well.

\begin{rmk}
	In~\cite[Rem.~2.8]{rosso2020quantum}, the authors state that the case of non-symmetric Frobenius algebra can be treated analogously to the symmetric one, i.e. with $\sigma = \tau$. 
	However, it is clear that the associative Yang--Baxter equation $Y(r) = 0$ is only satisfied for $\sigma = \tau^\psi$.
\end{rmk}

\subsection{Skew PQWP algebras}\label{ssec:skew-PQWP}
Let us again fix an algebra $F'\in \cal{C}$, an automorphism $\psi$ of $F'$ of finite order $\theta$, and $F\coloneqq \bbk[x]\ltimes F'$.
Set $\sigma = \tau$, and pick a $\tau$-invariant, $\tau$-central element $\Delta\in F'\otimes F'$.
As opposed to the setup of \cref{ssec:ns-Frob}, $\Delta$ and $\psi$ do not interact yet; let us \textit{impose} that $(\psi\otimes \psi)(\Delta) = \Delta$.
We again define $\Delta^{(k)}$ by the formula~\eqref{eq:Delta-k}, and $r$ by the formula~\eqref{eq:ns-r-mult}.
The same argument as in \cref{lem:Delta-k-props} shows that
\begin{gather*}
	\Delta^{(\theta)}x_1f = f\Delta^{(\theta)}x_1,\qquad \Delta^{(\theta)}x_1-x_2\Delta^{(\theta)} = \Delta(x_1^\theta - x_2^\theta),\\
	\Delta^{(\theta)}x_2-x_1\Delta^{(\theta)} = \psi_2^{-1}(\Delta^{(\theta+1)}-\Delta x^\theta_1) - \psi_1(\Delta^{(\theta+1)}-x^\theta_2 \Delta)
		= \psi_1(\Delta) (x^\theta_2 - x^\theta_1).
\end{gather*}
In particular, we have $\rho(f) = 0$ for $f\in F'$, $\rho(x_1) = \Delta x_1$, and $\rho(x_2) = -\psi_1(\Delta) x_1 = -x_1\Delta$. 
Hence, we are in the situation of \cref{lem:localize}.
It is clear that $S = r + \tau(r) = \Delta$.
As always, let $R\in F'\otimes F'$ be $\tau$-invariant and central.
A computation completely analogous to the one in \cref{prop:YB-ns} shows that $D = \overline{D} = E = 0$.

\begin{defn}\label{def:skew-PQWP}
	We call the resulting TIC algebra $F\wr \mathcal{H}_n$ a \textit{skew PQWP algebra}.
\end{defn}
In particular, let $F' = \bbk[t]/(t^m-1)$, $\psi(t) = \xi^2 t$, and $F = \bbk[x^{\pm 1}]\ltimes F'$, where $\xi$ is a primitive $\theta$-th root of unity, $m$ an even number, and $\theta$ divides $m$.
Set either $R = q$ or $R = q t^{m/2}\otimes t^{m/2}$, where $q\in \bbk$.
Then \cref{def:skew-PQWP} specializes to the skew PQWP algebras introduced in~\cite{buciumas2026quantum}, and so \cref{thm:deformed-TIC-PBW} specializes to~\cite[Th.~3.1.5]{buciumas2026quantum}.

\begin{rmk}
	In fact, the skew PQWP algebras considered in~\cite{buciumas2026quantum} can be realized as ordinary PQWP algebras.
	Indeed, in the setup of \cref{ssec:PQWP}, let $F' = \Mat_{m\times m}$ be the matrix algebra, $\Delta = \sum_{i,j = 1}^{m} E_{ij}$ the Frobenius element corresponding to the trace form, $F = F'[y^{\pm 1}]$.
	Let us assume $m = \theta$ for simplicity.
	One can easily verify that the map
	\begin{gather*}
		\Mat_{m\times m}[y^{\pm 1}] \to \bbk[x^{\pm 1}]\ltimes \bbk[t]/(t^m-1),\\
		y\mapsto x^m, \qquad E_{ii}\mapsto \sum_{j=1}^{m} (\xi^{2i}t)^j,\qquad 
		(E_{12}+E_{23}+\cdots+E_{m-1,m}+yE_{m1})\mapsto x
	\end{gather*}
	is an isomorphism of algebras.
	Moreover, the element $r'=\frac{\Delta y_1}{y_1-y_2}$ is sent precisely to the element $r$ defined by~\eqref{eq:ns-r-mult}.
	This establishes an isomorphism between PQWP and skew PQWP
	\[
		\Mat_{m\times m}[y^{\pm 1}] \wr \cal{H}_n \simto (\bbk[x^{\pm 1}]\ltimes \bbk[t]/(t^m-1)) \wr \cal{H}_n.
	\]
	When $m = k\theta$ is a multiple of $\theta$, one obtains a similar isomorphism by setting $F' = (\Mat_{\theta\times \theta})^{\oplus k}$.
\end{rmk}

\subsection{Affine Sergeev algebra}\label{ssec:Sergeev}
Finally, let us consider a slightly more involved example.
Let $\cal{C} = \sVec$, and $\mathsf{Cl} = \bbk[c]/(c^2+1)$ the Clifford algebra, where $c$ is odd.
Define $F = \mathsf{Cl}\ltimes \bbk[x^{\pm 1}]$, where $x$ is an even variable, and the semidirect product is taken with respect to the action $cx = x^{-1}c$.
Let $\psi:F\to F$ be the automorphism defined by $\psi(x) = x$, $\psi(c) = -c$.
We take $\sigma = \tau^\psi$, which permutes $x_i$'s and $c_i$'s without signs.
Note that the last condition of \cref{defn:TIC} is satisfied for the element $x+x^{-1}$.

Consider the element $m = \prod_{i,j = \pm 1}(x_1^i-x_2^j) = \prod_{i,j = \pm 1}(1-x_1^ix_2^j)$.
It is clearly not a zero divisor, and is central in $F^{\otimes 2}$.
Therefore in the localization $F^{\otimes 2}[m^{-1}]$ the (right) inverses of all factors in the products above will make sense.
Let $\varepsilon = q-q^{-1}$, where $q\in \bbk^*$ is a scalar.
Set $R = 1$ and
\begin{equation}\label{eq:rmat-Sergeev}
	r = \varepsilon(x_2(x_1-x_2)^{-1} + c_1c_2x_1x_2(1-x_1x_2)^{-1}).
\end{equation}
A quick computation shows that $\rho(x_1) = \varepsilon(c_1c_2x_1-x_2)$, $\rho(x_2) = \varepsilon(1+c_1c_2)x_2$, $\rho(c_1) = 0$, $\rho(c_2) = \varepsilon(c_1-c_2)$, so that \cref{lem:localize} applies.
Furthermore, we have $S = r + \sigma(r) = \varepsilon$, and a direct computation (see \cref{app:Sergeev}) shows that $Y(r) = \overline{Y}(r) = B(r) = 0$, so that the cubic braid relation is not deformed.
Comparing with formulas~\cite[(1.2, 2.2, 3.1-2)]{jones1999affine}, we see that the resulting TIC algebra is precisely the affine Sergeev algebra (also known as affine Hecke-Clifford superalgebra), and \cref{thm:deformed-TIC-PBW} recovers the basis theorem~\cite[Th.~2.2]{brundan2001hecke}.

\begin{rmk}
	The \textit{degenerate} affine Sergeev algebra already fits in the framework of \cref{subs:add-case}, see~\cite[Ex.~3.7]{Sav_AWPA2018}.
\end{rmk}

\section{Existence of polynomial representations}\label{sec:poly-exist}
In this section, we briefly address the question whether the polynomial representation $F^{\otimes n}$ of \cref{subs:polyrep} is defined in our cases of interest. 
For simplicity, we assume throughout that $\sigma = \tau$.

\subsection{Non-deformed case}
Let us first consider the case $D = \overline{D} = E = 0$.
This holds for the algebras considered in \cref{ssec:affine-Hecke,ssec:odd-Hecke,ssec:PQWP,ssec:PQWP-super,subs:add-case,ssec:skew-PQWP}.
In all these cases, we have a smaller subalgebra $F'\subset F$, such that $R\in F'\otimes F'$ and all skew derivations $\rho_i$ vanish on $(F')^{\otimes n}$.
Note that whenever $R = 0$, the equations (\ref{def:quad-al},~\ref{def:cube-al}) are satisfied by $\alpha = 0$.
Hence in this case, the polynomial representation can always be defined by \cref{prop:action}.

Let us next assume that $S = 0$.
Then the equation~\eqref{def:quad-al} becomes $R = \alpha_{21}\alpha_{12}$.
Assume that there exists a $\sigma$-invariant, $\sigma$-central element $\sqrt{R}\in F' \otimes F'$, such that $(\sqrt{R})^2 = R$; for example, when $R = 1$ we can take $\sqrt{R} = \pm 1$.
Then $\alpha = \sqrt{R}$ immediately satisfies~\eqref{def:quad-al}.
Moreover, as $\alpha$ is central and $\rho_i(\alpha_j) = 0$ for all $i,j$, the equation~\eqref{def:cube-al} also holds.

Now consider the case $S \neq 0$.
In all our examples, the element $S$ is then a $\sigma$-invariant, $\sigma$-central element of $F'\otimes F'$.
Let us assume for simplicity that $2$ is invertible in $\bbk$, and
\[
	\Delta \coloneqq S/2, \qquad R = 1,\qquad \alpha \in \bbk[\Delta]\subset F'\otimes F'.
\]
Then we can write down a formal solution of~\eqref{def:quad-al}:
\[
	\alpha = \Delta \pm \sqrt{1+\Delta^2} = \Delta \pm \left(1 + \frac{\Delta^2}{2} - \frac{\Delta^4}{8} + \cdots\right)
\]
This solution might or might not converge to an element of $\bbk[\Delta]$.
Let us assume it does, which is the case e.g. when $\Delta$ is nilpotent.
As $\alpha \in F'\otimes F'$, the equation~\eqref{def:cube-al} assumes the simpler form $\alpha_{23}\alpha_{13}\alpha_{12} = \alpha_{12}\alpha_{13}\alpha_{23}$.
Recall that we have $\Delta_{12}\Delta_{23} = \Delta_{23}\Delta_{13} = \Delta_{13}\Delta_{12}$ and $\Delta_{23}\Delta_{12} = \Delta_{12}\Delta_{13} = \Delta_{13}\Delta_{23}$.
Let us denote $C = \alpha - \Delta$; this element is central by definition.
Expanding the products, we obtain
\[
	\alpha_{23}\alpha_{13}\alpha_{12}-\alpha_{12}\alpha_{13}\alpha_{23} = (C_{12}-C_{13}+C_{23})(\Delta_{13}\Delta_{12}-\Delta_{12}\Delta_{13}).
\]
If $F'$ is commutative, then $\Delta_{13}\Delta_{12} = \Delta_{12}\Delta_{13}$ and so this expression trivially vanishes; in general, it does not. 

\begin{rmk}
	The difficulty arises specifically from our assumption $R = 1$.
	For example, if we have $R \coloneqq a(a-1)S^2$ for some $a\in \bbk$, then the element $\alpha\coloneqq aS$ clearly satisfies the conditions (\ref{def:quad-al},~\ref{def:cube-al}).
\end{rmk}

\subsection{Deformed case}\label{ssec:def-case-repn}
Let us now drop the assumption that the elements $D$, $\overline{D}$, $E$ vanish.
As explained in \cref{ssec:def-cubic}, we need to replace the condition~\eqref{def:cube-al} of \cref{prop:action} with~\eqref{def:cube-al-def}.
Observe that when $D = \overline{D} = 0$ and $E \neq 0$, neither $\alpha = 1$ nor $\alpha = 0$ satisfy the equation~\eqref{def:cube-al-def}.

Instead of trying to treat the general case, let us illustrate what can happen by looking at \cref{ssec:KLR}.
Consider quiver Hecke algebra for the $A_n$-quiver, that is when
\[
	R = \sum_{i,j = 1}^n Q^{ij}(x_1,x_2) e_i\otimes e_j, \qquad Q^{ij} = 0 \text{ for }|i-j|\neq 1,\qquad Q_{i,i+1} = x_1-x_2.
\]
We have $S = D = \overline{D} = 0$, and it follows from the equation~\eqref{eq:cst-term-KLR} that
\[
E = \sum_{i = 1}^{n-1} (e_i\otimes e_{i+1}\otimes e_i - e_{i+1}\otimes e_i\otimes e_{i+1}).
\]
Take $\alpha$ of the form $\alpha = \sum_{|i-j| = 1}\alpha^{ij}(e_i\otimes e_j)$,  $\alpha^{ij}\in \bbk[x_1,x_2]$.
The equation~\eqref{def:quad-al} rewrites as
\[
	\alpha^{i+1,i}(x_2,x_1)\alpha^{i,i+1}(x_1,x_2) = x_1-x_2, \qquad 1\leq i\leq n-1.
\]
Since we work in a commutative ring and the polynomials $\alpha^{ij}$ are at most linear, the equation~\eqref{def:cube-al-def} becomes
\[
	\sigma_1\rho_2(\alpha_1)\alpha_1 + E = \sigma_2\rho_1(\alpha_2)\alpha_2.
\]
A direct computation shows that this equation holds if we set $(\alpha^{i,i+1},\alpha^{i+1,i}) \in \{(x_1-x_2,1),(1,x_2-x_1)\}$ for each $i$.
This choice can be made independently for each $i$, so that we obtain $2^{n-1}$ versions of the polynomial representation.
This recovers the well-known fact that the polynomial representation of quiver Hecke algebra depends on the choice of the orientation of the quiver.

\begin{rmk}
	Recall the Frobenius quiver Hecke algebras in \cref{ssec:Frob-KLR}, and set $Q^{ij} = 0$ for $|i-j|\neq 1$, $Q^{i,i+1} = f_i(x_1-x_2)$, $f_i\in Z(F'_i\otimes F'_{i+1})$.
	Then a computation analogous to the above shows that, setting $(\alpha^{i,i+1},\alpha^{i+1,i}) = (f_i(x_1-x_2),1)$ or $(1,f_i(x_2-x_1))$, we obtain a polynomial representation.
\end{rmk}

\appendix
\section{Vanishing of deformation terms}
\subsection{Proof of \cref{prop:YB-ns}}\label{app:AYB-nonsymm}
Recall that $\sigma = \tau$ and $\tau(r) = -r$.
By \cref{lem:simplify-YYB}, $Y(r) = 0$ will imply $\overline{Y}(r) = 0$.
Bringing all terms to the common denominator, we need to check that the following expression vanishes:
\[
	\Delta_{12}^{(\theta)}\Delta_{23}^{(\theta)}(x_1^{\theta}-x_3^{\theta}) - \Delta_{13}^{(\theta)}\Delta_{12}^{(\theta)}(x_2^{\theta}-x_3^{\theta}) - \Delta_{23}^{(\theta)}\Delta_{13}^{(\theta)}(x_1^{\theta}-x_2^{\theta}).
\]
Moving all $x_i$'s to the right, and remembering that $(\psi\otimes\psi)(\Delta^{(\theta)}) = \Delta^{(\theta)}$, the terms rewrite as follows:
\begin{align*}
	\Delta_{12}^{(\theta)}\Delta_{23}^{(\theta)}(x_1^{\theta}-x_3^{\theta}) & = \sum_{i,j=0}^{\theta - 1}\psi_2^{-i-1}(\Delta_{12})\psi_2^{j-i}(\Delta_{23})x_1^ix_2^{\theta - 1 +j-i}x_3^{\theta - 1 -j}(x_1^{\theta}-x_3^{\theta}),\\
	\Delta_{13}^{(\theta)}\Delta_{12}^{(\theta)}(x_2^{\theta}-x_3^{\theta}) &=\sum_{i,j=0}^{\theta - 1}\psi_3^{-i-1}(\Delta_{13})\psi_2^{-j-i-1}(\Delta_{12})x_1^{i+j}x_2^{\theta - j-1}x_3^{\theta - 1 -i}(x_2^{\theta}-x_3^{\theta}),\\
	\Delta_{23}^{(\theta)}\Delta_{13}^{(\theta)}(x_1^{\theta}-x_2^{\theta}) &=\sum_{i,j=0}^{\theta - 1}\psi_3^{-i-1}(\Delta_{23})\psi_3^{-j-i-2}(\Delta_{13})x_1^{j}x_2^{i}x_3^{2\theta-i-j-2}(x_1^{\theta}-x_2^{\theta}),
\end{align*}
Note that every term has polynomial degree $3\theta-2$, and in each term at least one variable occurs with the degree at least $\theta$.
Let us check that the coefficient at the term $x_1^{\theta + i}x_2^jx_3^{2\theta - 2 -i-j}$ vanishes for all $i,j$.
Picking out the terms of $x_1$-degree $\theta+i$ for a fixed $i$, repeatedly using $(\psi\otimes \psi)(\Delta) = \Delta$, and setting $x_1=x_3=1$ to unburden the notation, we get:
\begin{align*}
	\sum_{j=\theta-1-i}^{2\theta-2-i}& \psi_1^{i+1}(\Delta_{12})\psi_3^{-j-1}(\Delta_{23}) x_2^j
	- \sum_{j=\theta}^{2\theta-2-i}  \psi_1^{i+1}\psi_3^{-j-1}(\Delta_{13})\psi_1^{i+1}(\Delta_{12})  x_2^j\\
	& + \sum_{j=0}^{\theta-2-i} \psi_3^{-i-j-2}(\Delta_{13})\psi_2^{-i-1}(\Delta_{12})  x_2^j
	- \sum_{j=0}^{\theta-1} \psi_3^{-j-1}(\Delta_{23})\psi_3^{-i-j-2}(\Delta_{13})  x_2^j\\
	& = \sum_{j=0}^{\theta-2-i} \psi_2^{-i-1}\psi_3^{-i-j-2}(\Delta_{13}\Delta_{12} - \Delta_{23}\Delta_{13})x_2^j +  \sum_{j=\theta-1-i}^{\theta-1}\psi_1^{i+1}\psi_3^{-j-1}(\Delta_{12}\Delta_{23} - \Delta_{23}\Delta_{13})x_2^j \\
	&+ \sum_{j = \theta}^{2\theta-2-i}\psi_1^{i+1}\psi_3^{-j-1}(\Delta_{12}\Delta_{23} - \Delta_{13}\Delta_{12})x_2^j.
\end{align*}
Since $\Delta$ is $\tau^\psi$-central and $\tau^\psi$-invariant, we have 
\[
\Delta_{12}\Delta_{23} = \psi_2(\Delta_{13})\Delta_{12} 
= \Delta_{13}\Delta_{12} = \psi_3(\Delta_{32})\Delta_{13}
= \Delta_{23}\Delta_{13}.
\]
It follows that the expression above vanishes.
The coefficients at the terms $x_1^ix_2^{\theta + j}x_3^{2\theta - 2 -i-j}$, $x_1^ix_2^jx_3^{\theta+(2\theta -2-i- j)}$ vanish by an analogous computation.
We conclude that $Y(r) = \overline{Y}(r) = 0$.

Finally, let us check that $B(r) = 0$.
We have 
\begin{align*}
	\Delta_{13}^{(\theta)}R_1 = R_{32}\Delta_{13}^{(\theta)} = \Delta_{13}^{(\theta)}R_{32} = \Delta_{13}^{(\theta)}R_2,
\end{align*}
so that $r_{13}R_1 = r_{13}R_2$, and we may conclude by \cref{lem:simplify-YYB}.\qed

\subsection{Computations for affine Sergeev algebra}\label{app:Sergeev}
Let us check that the $r$-matrix defined by~\eqref{eq:rmat-Sergeev} satisfies $Y(r) = \overline{Y}(r) = B(r) = 0$.
Without loss of generality, we can set $\varepsilon = 1$.
Observe the following simple identities in the localization $F^{\otimes 2}[m^{-1}]$, which follow from the relations in $F = \mathsf{Cl}\ltimes \bbk[x^{\pm 1}]$:
\[
	(x_1-x_2)^{-1}c_i = (-1)^ic_ix_i(x_1x_2-1)^{-1}, \quad (x_1x_2-1)^{-1}c_i = (-1)^ic_ix_i(x_1-x_2)^{-1}, \qquad i\in \{1,2\}.
\]
Using these equalities, let us compute $Y(r)$.
First, let us denote:
\begin{gather*}
	P = (x_1-x_2)^{-1}(x_1-x_3)^{-1}(x_2-x_3)^{-1},\\
	P' = (1-x_1x_2)^{-1}(1-x_1x_3)^{-1}(1-x_2x_3)^{-1},
	\quad P^{ij} = P'(1-x_ix_j)(x_i-x_j)^{-1}.
\end{gather*}
We write $c_{ij}\coloneqq c_ic_j$, $x_{ij}\coloneqq x_ix_j$  for brevity.
In these notations, we obtain
\begin{align*}
	r_1r_2 &= (x_2(x_1-x_2)^{-1} + c_1c_2x_1x_2(1-x_1x_2)^{-1})(x_3(x_2-x_3)^{-1} + c_2c_3x_2x_3(1-x_2x_3)^{-1})\\
	& = x_2x_3(x_1-x_3)P + c_{12}x_{123}(1-x_{13})P^{12} + c_{13}x_{13}x_2(1-x_{13})P^{13} - c_{23}x_{23}(x_1-x_3)P^{23};\\
	\sigma_1(r_2r_1)&=x_1x_3(x_3-x_2)P - c_{12}x_{123}x_1(x_2-x_3)P^{12} - c_{13}x_{13}x_1(1-x_{23})P^{13} + c_{23}x_{23}x_1(1-x_{23})P^{23};\\
	r_2\sigma_1(r_2)&= x_3^2(x_1-x_2)P + c_{12}x_{123}(1-x_{12})P^{12} - c_{13}x_{13}(x_1-x_2)P^{13} + c_{23}x_{23}x_3(1-x_{12})P^{23}.
\end{align*}
Comparing term by term the coefficients at $P$, $P^{ij}$, we conclude that $Y(r) = 0$.
By \cref{lem:simplify-YYB}, we also have $\overline{Y}(r) = 0$.
Note that $\sigma_1(r_2) = \sigma_2(r_1)$ and $\sigma(r)r$ is central; applying \cref{lem:simplify-YYB} once again, we obtain $B(r) = \sigma_1(r_2)(R_2-R_1) = 0$.\qed

\begingroup
\setstretch{0.9}
\bibliography{zot-bib}{}
\bibliographystyle{alphaabbr}
\endgroup

\end{document}